\newcommand*{\rom}[1]{\expandafter\@slowromancap\romannumeral #1@}
\newcommand\numberthis{\addtocounter{equation}{1}\tag{\theequation}}
\theoremstyle{plain}
\newtheorem{thm}{Theorem}[section]
\newtheorem{lem}[thm]{Lemma}
\newtheorem{prop}[thm]{Proposition}
\theoremstyle{definition}
\newtheorem{defn}[thm]{Definition}
\theoremstyle{remark}
\newtheorem{rem}[thm]{Remark}
\newtheorem{notation}[thm]{Notation}
\numberwithin{equation}{section}
\newcommand{\Qb}{{\mathbb Q}}
\newcommand{\Rb}{{\mathbb R}}
\newcommand{\Cb}{{\mathbb C}}
\newcommand{\Eb}{{\mathbb E}}
\newcommand{\Zb}{{\mathbb Z}}
\newcommand{\ib}{{\mathbf 1}}
\newcommand{\hb}{{\mathbf h}}
\newcommand{\Jbb}{{\mathbf J}}
\newcommand{\Ac}{{\mathcal A}}
\newcommand{\Bc}{{\mathcal B}}
\newcommand{\Bw}{{\widetilde{\Bc}}}
\newcommand{\Lc}{{\mathcal L}}
\newcommand{\Lt}{{\widetilde{\mathcal L}}}
\newcommand{\Pt}{{\widetilde{\mathcal P}}}
\newcommand{\Nt}{{\widetilde{\mathcal N}}}
\newcommand{\Oc}{{\mathcal O}}
\newcommand{\al}{{\alpha}}
\newcommand{\ra}{\rightarrow}
\newcommand{\twobytwo}[4]{\left[ \begin{matrix} #1 & #2 \\ #3 & #4
\end{matrix} \right]}
\newcommand{\hide}[1]{{}}
\newcommand{\rdot}{\!\cdot\!}
\theoremstyle{plain}
\newtheorem{thmL}{Theorem}
\begin{document}

\title[Dynamics of Ostrowski skew-product]{Dynamics of Ostrowski skew-product: \rom{1}. Limit laws and  Hausdorff dimensions
}
\author{Val\'erie  Berth\'e}
\address{Université de Paris, CNRS, IRIF, F-75006 Paris, France}
\email{berthe@irif.fr}

\author{Jungwon Lee}
\address{LPSM, CNRS, Sorbonne Universit\'e, 4 Place Jussieu, 75005 Paris, France}
\email{jungwon@lpsm.paris; jungwon.lee@warwick.ac.uk}
\curraddr{Mathematics Institute, University of Warwick, Coventry CV4 7AL, UK}
\thanks{This work was supported by the Agence Nationale de la Recherche through the project CODYS (ANR-18-CE40-0007).}

\date{\today}

\begin{abstract}

We present a dynamical study of Ostrowski's map based on the use of transfer operators. The Ostrowski dynamical system is obtained as a skew-product of the Gauss map (it has the Gauss map as a base and intervals as  fibers) and produces expansions of real numbers with respect to an irrational base
given by  continued fractions. By studying spectral properties of the associated transfer operators, we show that the absolutely continuous invariant measure of the Ostrowski dynamical system has exponential mixing properties. We deduce a central limit theorem for random variables of an arithmetic nature, and motivated by applications in inhomogeneous Diophantine approximation, we also  get Bowen--Ruelle type implicit estimates in terms of spectral elements for the Hausdorff dimension of a bounded digit set. 

\end{abstract}

\maketitle

\begin{center}
{\em Dedicated to J\"org Thuswaldner  on the occasion of his  $50^{th}$ birthday}
\end{center}


\section{Introduction}

We are concerned with the Ostrowski transformation defined on $[0,1)^2$ by
\[ S(x,y)= \left( \{1/x \} , \{y/x\} \right)   \] 
where $\{ . \}$ stands for the fractional part. This is a simple skew-product extension of the Gauss map  defined on $[0,1)$ by $ x \mapsto \{1/x\}$, which is known  to describe dynamically  regular continued fraction expansions.  The Ostrowski map, introduced in \cite{Ito:86},  allows the  ergodic description of the behaviour  of  the digits  in the associated  numeration system (see Proposition \ref{ost} below).

Various interesting arithmetic, combinatorial  or dynamical aspects  of the Ostrowski map (and  numeration)  have been investigated in a wide range of applications from Diophantine approximation to  symbolic dynamics. Indeed, classical  discrepancy results about Kronecker sequences  rely  on the use of Ostrowski's numeration. It is also well-known that the Ostrowski map provides the  inhomogeneous best approximations of $y$ in base $x$, as  stressed  for instance in Ito--Nakada \cite{ItoNakada}, Berth\'e--Imbert  \cite{BertheImbert} or Beresnevich--Haynes--Velani \cite{BHV:20},  with the latter providing  estimates for sums of reciprocal of fractional parts motivated by applications in  multiplicative  Diophantine approximation. There  is  also a deep connection with Sturmian sequences, such as introduced by Morse--Heldund \cite{MH}, which arise in symbolic dynamics  from   two-letter codings  of  orbits under  irrational  rotations on the unit circle; see e.g., the survey \cite{Ber:01} and the references  therein.   We further refer to Bromberg--Ulcigrai \cite{BroUl:18} for the use of Ostrowski's map  as a renormalisation tool for temporal limit theorems on deterministic random walks and  to Arnoux--Fisher \cite{ArnFis:01} in connection with the geometry  of scenery flows of the  torus. Lastly, Hara--Ito \cite{HaraIto} showed that the Ostrowski numeration yields a characterisation  of real quadratic number fields  through  periodic expansions.

In this article, we discuss  dynamical aspects of the Ostrowski map in the context of  the study of  spectral properties of the associated  transfer operators. We observe a Gaussian behavior  for Birkhoff sums that gives a refinement of ergodic results from \cite{ItoNakada}, as well as Bowen--Ruelle type estimates  for the Hausdorff dimension of  fractal sets of   pairs of real numbers  defined as  having  bounded digits with respect to Ostrowski's numeration.

\subsection{Homogeneous and inhomogeneous Diophantine approximation}
We begin by recalling the general context and some elements of motivation in brief.  Denote by $\|\rdot \|$ the distance to the nearest integer. According to the classical result of Dirichlet, for any irrational number $x$, there exist infinitely many integers   $q>0$ such that  $q \cdot \|q x \| <1$.
 Moreover, it  is well-known that the best approximations $q$ are given by the convergents of  the continued fraction expansion of $x$. This naturally leads to the  notion
of badly approximable numbers   for which there exists   $\varepsilon >0$ such that 
$ q \cdot \| q x \| \geq  \varepsilon$ for any   $q$.

Homogeneous approximation is
   about the bounds on $\|q x \|$, and it  corresponds dynamically  to  the 
study of    the  orbit of  $ 0$ under the  action of  the irrational rotation on the unit  circle $\Rb / \Zb$.
 Basically, inhomogeneous Diophantine approximation considers the case of     the orbit   of any point $y$  by  shifting  the initial point $0$, i.e., given a point $y$,  it deals with the behavior of $\|q x-y \|$.


In the inhomogeneous setting,  Minkowski \cite{Min:1900} first showed that for any irrational  number $x$ and for any  real  number $y$ which is not of the form $mx+n$, with $m$ and $n$ being integers, then 
\[ \liminf_{|q| \ra \infty} |q| \rdot \|q x -y\| \leq \frac{1}{4} .\]
Note that this bound has been improved  by  Khintchine \cite{Khin:1935} in the setting of one-sided approximation (i.e., when we consider positive   integers $q$)  to $1/ \sqrt{5}$,   for any real  number $y$. See also \cite{Cassels:54,CRS,Descombes,Sos:58} for  related   results  on the  sequence of inhomogeneous minima  based on the use of  Ostrowski's  numeration.

Thus, it is natural to  consider the inhomogeneous analogue of the notion of badly approximable numbers: for an irrational number $x$ and for  $\varepsilon>0$, set
\begin{equation} \label{bad:inhom:ord}
B(x, \varepsilon)= \left\{ y \in \Rb:  \liminf_{|q|\rightarrow \infty}  |q| \rdot  \|q x - y \|\geq \varepsilon \right\} .
\end{equation}
When $q$ is assumed to  be positive, it is  also simply possible to consider  one-sided approximations; see for instance  \cite{kim,CRS,BKLR:21}.

Some questions arising  from  Diophantine approximations, either  in the homogeneous or  in the  inhomogeneous case, are   typically related to the Hausdorff dimension estimate of the set of badly approximable numbers,  with  techniques from metric number theory or homogeneous dynamics.
 Recall  indeed that  $x$ is badly approximable if and only if the partial quotients of its continued fraction expansion are bounded. Thus the study of 
bounded type fractal sets arises in a natural way: for a given  integer $N \geq 2$,  denote by  $C_N$ the set of  real numbers  $x \in [0,1)$ which admit a continued fraction expansion $x=[0;a_1,a_2, \ldots]$ with all $a_i$ satisfying $ 1\leq a_i \leq N$. A powerful approach towards the study of  the Hausdorff dimension of $C_N$ is provided by the thermodynamic formalism via transfer operators  and dynamical determinants,  leading to  a Bowen--Ruelle type spectral characterisation for the Hausdorff dimension of $C_N$.  We refer  to Hensley \cite{hens2,hens1}, Cesaratto--Vall\'ee \cite{cesaratto:val} and Jenkinson--Gonzalez--Urba\'nski \cite{jenkinson}. We  also refer to Jenkinson--Pollicott \cite{jenkinson:pollicott} for an explicit numerical computation on the Hausdorff dimension of $C_N$ in the context of Zaremba's conjecture and to Das--Fishman--Simmons--Urba\'nski \cite{DFSU} for strengthening Hensley's  formula via small perturbations of a conformal iterated function system.


The situation for  inhomogeneous approximation is more contrasted; see for instance \cite{kim,BHKV:10,LSS:19}. In contrast with the homogeneous  case,   it  is proved in particular  by Bugeaud--Kim--Lim--Rams  \cite{BKLR:21}  that  the set  $B(x,\varepsilon)$ (defined in (\ref{bad:inhom:ord}))  has  full Hausdorff dimension for some positive $\varepsilon$  if and only if $x$ is singular on average, which is equivalent
to  the average of  the logarithms of the partial quotients  to tend to infinity.     See  also  recent works of Kim--Liao \cite{kim:liao} for uniform
inhomogeneous approximation or Bugeaud--Zhang \cite{bugeaud:zhang} for  the field of formal power series.

In the meanwhile, there has been no development of a spectral approach in the inhomogeneous setting.   We set up the first steps in this direction. Here, we introduce and study a concrete dynamical formulation  for a  certain  bounded-type set (see (\ref{set:hausdorff})  below), motivated  by  inhomogeneous Diophantine approximation that arises from the Ostrowski expansion (see also  the discussion at the end of \S\ref{sec:clt}). One  reason for this  specific  choice of  digits  is  that it allows an explicit   geometric description of   fundamental  digit sets  (i.e.,  sets of real numbers  having the same given  number  of  first  pairs of digits $(a, b)$ when applying the Ostrowski map, such as described in \S\ref{prep:dynamic})   in terms of   quadrangulars that will provide suitable   covers for the Hausdorff estimates (see  Figure  \ref{fig:3bis}).

\subsection{Set up and main results}
The Ostrowski transformation $S$ is  defined on $[0,1)^2$ by
$S(x,y)= \left( \{1/x \} , \{y/x\} \right) $. Let $(x,y)=(x_0,y_0)$ and $(x_i,y_i)=S^i(x,y)$ for all $i \geq 1$. We then get two sequences $(a_i)_i$ and $(b_i)_i$ of non-negative integers given by
$a_i=\lfloor \frac{1}{x_{i-1}} \rfloor$ and $b_i=\lfloor \frac{y_{i-1}}{x_{i-1}} \rfloor$ ($i\geq 1$). Note that the sequence $(a_i)_i$  provides the continued fraction expansion of $x=[0;a_1,a_2, \ldots]$. We denote by $\frac{p_i}{q_i}=[0;a_1,\ldots, a_i]$  the $i$-th convergent of $x$ and set $\theta_i :=q_i x - p_i$. Then the  sequence $(b_i)_i$ yields the  expansion $y= \sum_{i=1}^\infty b_i |\theta_{i-1}| $  for $y$ with respect to $x$.
  Moreover,  the  sequence of digits $(b_i)_i$  satisfies the admissibility conditions stated in next proposition
  which guarantee uniqueness of such an   expansion. We refer to Proposition \ref{ost} and \S\ref{ap:ost}  for more  precise details.  See also   Barat--Liardet \cite{BaratLiardet} or Berth\'e \cite{Ber:01} for further reading on  the Ostrowski numeration system.  In particular, a  related numeration system  is defined  for integers as a generalisation of the Zeckendorf representation  which involves Fibonacci numbers and the golden ratio.

\begin{prop}[Ostrowski numeration system] \label{intro:ost}
Let $x \in [0,1) \backslash \Qb$.  Let $(a_i)$ stand for the sequence of partial quotients in the continued fraction expansion of $x$. Every real number $y \in [0,1)$ can be written uniquely in the form 
\[ y= \sum_{i=1}^\infty b_i |\theta_{i-1}| \]
where $0 \leq b_i \leq a_i$ for all $i \geq 1$,  if $a_i=b_i$ for some $i$, then $b_{i+1}=0$, and $ a_i  \neq b_i$ for infinitely many odd and even  indices $i$.
\end{prop}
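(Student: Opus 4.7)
The plan is to construct the digit sequence dynamically from iterates of the Ostrowski map $S$, verify the admissibility conditions directly from the dynamics, and derive uniqueness via an extremality argument for admissible tails, using the telescoping identity $|\theta_{i-1}| = a_{i+1}|\theta_i| + |\theta_{i+1}|$ as the main computational tool.

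For existence, I iterate $S$ to define $x_i, y_i$ and the digits $a_i = \lfloor 1/x_{i-1} \rfloor$, $b_i = \lfloor y_{i-1}/x_{i-1} \rfloor$. The defining identity $y_{i-1} = (b_i + y_i) x_{i-1}$, combined with the standard relation $|\theta_{i-1}| = x_0 x_1 \cdots x_{i-1}$ (obtained from the recurrence $\theta_{i+1} = a_{i+1}\theta_i + \theta_{i-1}$ together with the alternating signs of $\theta_i$), yields by induction on $n$ the finite expansion
\begin{equation*}
y = \sum_{i=1}^n b_i |\theta_{i-1}| + y_n |\theta_{n-1}|,
\end{equation*}
and the series expansion follows from $|\theta_{n-1}| \to 0$ and $y_n \in [0,1)$. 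The bound $b_i \leq a_i$ is immediate from $y_{i-1} < 1$; the implication $b_i = a_i \Rightarrow b_{i+1} = 0$ follows by rewriting $y_{i-1} < 1$ as $b_i + y_i < 1/x_{i-1} = a_i + x_i$, which forces $y_i < x_i$. To check the parity condition, telescope the recurrence into
\begin{equation*}
\sum_{j=0}^{J} a_{k+2j+1}|\theta_{k+2j}| = |\theta_{k-1}| - |\theta_{k+2J+1}|.
\end{equation*}
If $b_i = a_i$ held for all sufficiently large $i$ of a fixed parity, admissibility would force the intermediate digits to vanish, and substituting the resulting telescoped tail into the finite identity above yields $y_n = 1$ for some $n$, contradicting $y_n \in [0,1)$.

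For uniqueness, suppose two admissible sequences $(b_i)$ and $(b_i')$ give the same $y$, and let $k$ be the first index where they differ; without loss of generality $b_k > b_k'$. Subtracting produces
\begin{equation*}
|\theta_{k-1}| \leq (b_k - b_k')|\theta_{k-1}| = \sum_{i > k}(b_i' - b_i)|\theta_{i-1}| \leq \sum_{i > k} b_i' |\theta_{i-1}|.
\end{equation*}
I claim that the supremum of $\sum_{i > k} b_i' |\theta_{i-1}|$ over admissible tails is $|\theta_{k-1}|$, attained \emph{only} by the maximal tail $b_{k+1}' = a_{k+1},\, b_{k+2}' = 0,\, b_{k+3}' = a_{k+3}, \ldots$. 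Any first deviation from this pattern creates a shortfall that the remaining admissible tail cannot recover, as checked by iterating the telescoping identity. Since the maximal tail has $b_i' = a_i$ for every $i > k$ of parity opposite to $k$, it is excluded by the hypothesis that $a_i \neq b_i'$ infinitely often for both parities, yielding strict inequality and the desired contradiction.

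The main obstacle is the strict-supremum step in uniqueness: the telescoping recurrence yields $|\theta_{k-1}|$ as a non-strict upper bound with essentially no work, but converting this into a strict inequality for every non-maximal admissible tail — which is precisely what the parity hypothesis is designed to exploit — requires carefully quantifying the shortfall at each deviation and propagating it to the infinite-tail limit.
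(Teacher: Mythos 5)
Your existence and admissibility arguments coincide with the paper's own (given in its appendix): the paper likewise generates the digits by iterating $S$, obtains $y=\sum_{i=1}^n b_i|\theta_{i-1}|+y_n|\theta_{n-1}|$ from $y_{i-1}=(b_i+y_i)x_{i-1}$ and $|\theta_{i-1}|=x_0\cdots x_{i-1}$, proves conditions (1)--(2) exactly as you do, and rules out $b_i=a_i$ along an entire parity class by the same telescoped tail identity (phrased there as $1=\sum_{i\ge i_0}a_i|\theta_{i-1}|=y_{i_0}<1$, which is your ``$y_n=1$'' contradiction). The genuine difference is in uniqueness. After isolating the first disagreement index $k$ with $b_k>b'_k$, the paper argues locally, by two cases on whether $b'_{k+1}-b_{k+1}<a_{k+1}$ or $=a_{k+1}$, using one-step telescoping bounds and condition (3) for strictness; you instead invoke a global extremal statement: over admissible tails, $\sum_{i>k}b'_i|\theta_{i-1}|$ has supremum $|\theta_{k-1}|$, attained only by the maximal tail $a_{k+1},0,a_{k+3},0,\dots$, which condition (3) excludes. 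Your packaging is correct and in fact the more robust way to close the argument: in the paper's second case the printed bound is $a_{k+1}|\theta_k|+|\theta_{k+1}|+|\theta_{k+2}|=|\theta_{k-1}|+|\theta_{k+2}|$, which does not fall below $|\theta_{k-1}|$ without a further iteration of exactly the kind your lemma encodes. The step you flag as the main obstacle does go through along the lines you indicate, and deserves to be written out: first prove the non-strict bound $\sum_{i=m}^{n}b_i|\theta_{i-1}|\le|\theta_{m-2}|$ for any finite block obeying $0\le b_i\le a_i$ and the Markov rule, by induction on $n-m$ (if $b_m\le a_m-1$, the inductive bound $|\theta_{m-1}|$ for the block starting at $m+1$ gives at most $a_m|\theta_{m-1}|\le|\theta_{m-2}|$; if $b_m=a_m$, then $b_{m+1}=0$ and the inductive bound $|\theta_m|$ for the block starting at $m+2$ gives at most $a_m|\theta_{m-1}|+|\theta_m|=|\theta_{m-2}|$), then pass to the limit. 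Next observe that a first deviation from the maximal tail can only occur at an index $j$ of the same parity as $k+1$, since at the opposite parity the digit $0$ is forced by the Markov rule; at such a $j$ one has $b'_j\le a_j-1$, so the total is at most $\bigl(|\theta_{k-1}|-|\theta_{j-2}|\bigr)+(a_j-1)|\theta_{j-1}|+|\theta_{j-1}|=|\theta_{k-1}|-|\theta_j|<|\theta_{k-1}|$, which is the quantified shortfall you need. With that inserted, your proof is complete and, for the uniqueness half, cleaner than the paper's case analysis.
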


In view of  Proposition  \ref{intro:ost}, Ostrowski's numeration  can be  defined  with respect  either to  the basis $ (|\theta_{i}|)_i$  or  to the  basis $ (\theta_{i})_i$.
Accordingly,  there are  various  dynamical   systems  associated with  Ostrowski's numerations such as discussed  e.g. in  \cite{Ito:86,ItoNakada}, which   share a  strong  resemblance.
We have chosen to focus here on the simplest one which has moreover the particularity  of being formally close   at first sight to the  two-dimensional 
continued fraction  algorithm Jacobi--Perron algorithm, which is among the most famous  continued  fraction  algorithm  (see Remark \ref{rem:JP1} for more details on this comparison).

Proposition \ref{intro:ost} enables us to see that the Ostrowski transformation provides  inhomogeneous  approximations of $y= \sum_{i=1}^\infty b_i |\theta_{i-1}|=
\sum_{i=1}^\infty b_i (-1)^{i-1} \theta_{i-1}$ in base $x$. More precisely, setting 
\begin{equation}\label{eq:Mn}
 M_n:=\sum_{i=1}^n b_i  (-1)^{i-1}q_{i-1}, 
 \end{equation} 
the sequence $(M_n)_{n \geq 1}$ yields a sequence of approximants  such  that $M_n x$ converges to $y$ modulo $1$. See \S\ref{arith:iden}  for more details
and also \S\ref{sec:clt}.

Motivated by the   study of badly approximable  numbers, we  consider the following bounded-type set of points whose Ostrowski expansion admits  restricted digits. Fix an integer $N \geq 2$,  and set
\begin{equation} \label{set:hausdorff}
E_N=\{ (x,y) \in [0,1)^2 : 1 \leq  a_i \leq N \ \mbox{and }   b_i=a_i-1 , \ \mbox{for all } i \geq 1 \}  .
\end{equation}
\hide{
Note that given a point $(x,y) \in E_N$, we have $y \in B_S(x,\varepsilon)$. Thus our inhomogeneous approximation that arises from the Ostrowski map can be understood by studying the Hausdorff dimension of $E_N$.}
We   study  the  Hausdorff dimension of $E_N$. We further ask how often the  digit condition of  (\ref{set:hausdorff}) happens. For instance, consider  for all $n \geq 1$ and $x=[0;a_1, a_2, \ldots , a_n , \ldots]$
\begin{equation} \label{par:diophantine}
D_{N,n}(x,y)=\#\{ 1 \leq i \leq n : a_i \leq N \ \mbox{and }   b_i =a_i-1  \} .
\end{equation}
Regarding $D_{N,n}$ as a random variable over  pairs of real numbers in $[0,1)^2$, we study its limiting behavior as $N$ goes to infinity. 
Notice that
$$
D_{N,n}(x,y) = \sum_{i=1}^n f (a_i, b_i)   =
 \sum_{i=1}^n f \left(\left\lfloor \frac{1}{x_{i-1}} \right\rfloor,  \left\lfloor \frac{y_{i-1}}{x_{i-1}} \right\rfloor \right)
$$
where $f$ is the membership function given by $f(a,b)=\ib_{\leq N}(a) \ib_{a-1}(b)$. 
Simply the quantity $D_{N,n}$ can be understood as a Birkhoff sum of the Ostrowski map. 



\bigskip 

Now we describe   our  approach, which  is  both dynamical  and functional, being based on classical  transfer operator techniques. For complex parameters $s,w$ and for some observable  $f : I^2 \rightarrow {\mathbb R}^+$, consider the weighted transfer operator associated to the Ostrowski map
\begin{equation} \label{int:op}
\Lc_{s,w}\phi(x,y)=  \sum_{(a,b) \atop 0 \leq b \leq a, 1 \leq a} \frac{\exp\left( w \rdot f \left(\frac{1}{a+x}, \frac{b+y}{a+x} \right)\right)}{(a+x)^{3s}} \phi\left(\frac{1}{a+x}, \frac{b+y}{a+x} \right) \cdot \ib_{S I_{a,b}}(x,y)  
\end{equation}
whenever the series converges (see \S\ref{sub:transferop}  for a precise assumption), where $I_{a,b}$ denotes    stands for the interior of the set of  points $(x,y)$ satisfying $ \lfloor 1/x \rfloor=a$ and $ \lfloor y/x \rfloor  =b$. In this article, we study   spectral properties of the operator for $(s,w)$ close to $(1,0)$, and for a  given function  $f : I^2 \rightarrow {\mathbb R}^+$ assumed to be of {\em moderate growth}, i.e.,  $f \circ \hb_{a,b}=O(\log a)$ for all $(a,b)\in \Ac$;  we  use the  fact  that the parameters $s$ and $w$ are related to the study of  probabilistic limit theorems for Birkhoff sums  and to Hausdorff dimensions, respectively.

In order to study the spectrum of this operator, the main difficulty is to deal with the characteristic functions $\ib_{S I_{a,b}}$ that appear in (\ref{int:op}), which precisely comes from the digit conditions from Proposition \ref{intro:ost}. However, for the Ostrowski map, the usual and well-known function spaces, e.g. $C^1$ (used in \cite{bv} for the study of  Gauss map),  or else the set of  H\"older functions, are not invariant under the action of  the transfer operator. We have chosen to adapt  the strategy   of Mayer \cite{mayer}   developed for a class of locally expanding maps of $[0,1]^d$,  to the current  Ostrowski   setting.
The operator is  then proved to be compact  which we plan to use  for further numerical  estimates (as e.g. in \cite{jenkinson:pollicott}).  We also follow Broise \cite{broise} which handles  the case of  the Jacobi--Perron  algorithm; this allows us   to strengthen  the comparison  between both algorithms. The  associated dynamical system   is   not a skew product  but a two-dimensional continued fraction algorithm,  however  it shares  common features with the Ostrowski's map (see Remark \ref{rem:JP1} for more details).

We  thus observe the existence of a coarse topological  partition\footnote{A \emph{topological partition of $[0,1)^2$} is a collection of pairwise disjoint open nonempty sets so that the union of their closures equals $[0,1]^2$.} for $[0,1)^2$ with the  two following  disjoint subsets $\Delta_0=\{(x,y): y<x\}$ and $\Delta_1=\{(x,y): x < y \}$. This allows us to interpret the behavior of the  transfer operator with respect to the multiplication by characteristic functions in terms of the admissibility between two partitions $I_{a,b}$ and $\Delta_i$ (see \S\ref{mod:mayer} for precise details). From this, we introduce a generalised transfer operator. For each $i \in \{0,1\}$, define 
\begin{equation} \label{intro:modop}
\Lt_{i,(s,w)} \Phi(x,y) = \sum_{j \in \{0,1\}} \sum_{(a,b) \in \Ac_{i,j}} \frac{\exp\left( w \rdot f \left(\frac{1}{a+x}, \frac{b+y}{a+x} \right)\right)}{(a+x)^{3s}} \cdot \Phi_j \left( \frac{1}{a+x}, \frac{b+y}{a+x}\right) 
\end{equation}
where $\Phi=(\Phi_0, \Phi_1)$  with $\Phi_i$  acting on $\Delta_i$, and $\Ac_{i,j}$ is the set of pairs of  digits $(a,b)$ (with $a \geq 1$, $0 \leq b \leq a$)  such that for each $i$, the associated inverse branch $\hb_{a,b}$ maps $\Delta_i $ to $ \Delta_j$.

The generalised transfer operator is almost similar to $\Lc_{s,w}$ but  it will act on the function space with an additional index $i$ for the modified partition $\{ \Delta_i \}_{i \in \{0,1\}}$. More precisely, we consider    locally holomorphic functions   described in terms of the  space of  ``pairs''   of functions  that are   holomorphic on $\Omega \subseteq \Cb^2$,  where $\Omega $ is a bounded domain on which the inverse branches of the Ostrowski map can be analytically continued, defined as follows:
$$
\Bw(\Omega)=\{ \Phi=(\Phi_0, \Phi_1): \Omega \ra \Cb : \Phi_i \ \mbox{is bounded and holomorphic for } i=0,1  \}.
$$
This is a Banach space when endowed  with the norm 
\[ \|\Phi \|=\sup_{i \in \{0,1\}} \sup_{(u,v) \in \Omega} | \Phi_i(u,v)|  \]
on which the generalised transfer operator $\Lt_{s,w}=(\Lt_{0,(s,w)},\Lt_{1,(s,w)})$ acts properly when $(s,w)$ is close to $(1,0)$ and  $f$ is assumed to be of moderate growth, i.e.,  $f \circ \hb_{a,b}=O(\log a)$ for all $(a,b)$ satisfying $1 \leq a$, $0 \leq b \leq a$. Then we first observe the following.

\begin{thmL}[Theorem \ref{spectrum1}]
The operator $\Lt_{s,w}$ on $\Bw(\Omega)$ is compact. In particular, it has a simple largest eigenvalue $\lambda_{s,w}$ whose modulus is strictly larger than all other eigenvalues.
\end{thmL}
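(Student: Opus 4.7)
The plan is to verify compactness by a standard inverse-branch argument, and then deduce the spectral-gap statement from a Perron--Frobenius-type theorem for positive compact operators, following the strategies of Mayer \cite{mayer} and Broise \cite{broise}.

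First, I would check that the series defining $\Lt_{s,w}$ converges in operator norm on $\Bw(\Omega)$ for $(s,w)$ in a neighborhood of $(1,0)$. Each inverse branch $T_{a,b}(x,y)=\bigl(\tfrac{1}{a+x},\tfrac{b+y}{a+x}\bigr)$ extends holomorphically to $\Omega$ by hypothesis, so the pullback $\Phi\mapsto \Phi\circ T_{a,b}$ is bounded on the space of bounded holomorphic functions with norm at most $1$. The exponential factor $\exp(w\rdot f)$ is uniformly bounded for $w$ near $0$ since $f$ is bounded ($0\le f\le 1$). The weight $(a+x)^{-3s}$ is uniformly of order $a^{-3\Re s}$ on $\Omega$, and summing over $0\le b\le a$ and $a\ge 1$ gives a series comparable to $\sum_{a\ge 1} a^{1-3\Re s}$, which converges for $\Re s>2/3$ and hence uniformly on a small neighborhood of $s=1$. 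The transition matrix entries $A^{(a,b),i}_{\tau_{a,b}(i)}$ only restrict the sum, so the bound persists for $\Lt_{s,w}$.

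Second, for compactness I would use that the union of inverse-branch images $\bigcup_{a,b} T_{a,b}(\Omega)$ lies in a compact subset $K\Subset\Omega$: indeed each branch contracts uniformly (a Koebe/geometric estimate), so $T_{a,b}(\Omega)$ sits strictly inside $\Omega$ independently of $(a,b)$. Consequently every pullback operator $\Phi\mapsto \Phi\circ T_{a,b}$ factors through the restriction map from bounded holomorphic functions on $\Omega$ to bounded holomorphic functions on $K$, which is compact by Montel's theorem (a normal-family argument). Each summand of $\Lt_{s,w}$ is therefore compact, and norm convergence of the series passes compactness to the limit. One can strengthen this to nuclearity as in \cite{mayer}, which is useful for numerical work but not needed here.

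Third, for the simple-largest-eigenvalue statement I would rely on positivity. For real $(s,w)$ near $(1,0)$, the formula \eqref{intro:modop} makes it clear that $\Lt_{s,w}$ preserves the cone of pairs of non-negative real-analytic functions on $\Omega\cap\Rb^2$, and the cone is reproducing in $\Bw(\Omega)$. The crucial input is that the $2\times 2$ transition matrix $A$ governing admissibility between the pieces $I_{a,b}$ and the coarse partition $\{\Delta_0,\Delta_1\}$ is primitive (some finite power has all entries strictly positive): this is exactly what promotes positivity of $\Lt_{s,w}$ to strict positivity of some iterate $\Lt_{s,w}^{\,k}$. A Krein--Rutman / Perron--Frobenius theorem for positive compact operators with a primitive action on a reproducing cone then gives a simple positive dominant eigenvalue $\lambda_{s,w}$, strictly above the rest of the spectrum in modulus, with a positive eigenvector. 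For complex $(s,w)$ near $(1,0)$, Kato's analytic perturbation theory preserves the isolation and algebraic simplicity of $\lambda_{s,w}$.

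The hard part will be the primitivity/irreducibility step: one must show that the coupling induced by $A$ between the two fiber coordinates genuinely mixes the two components of $\Bw(\Omega)$, so that only one dominant eigenvalue appears (rather than a pair of equal-modulus eigenvalues split along $\{0\}\times\Omega$ and $\{1\}\times\Omega$). This is precisely the combinatorial verification that Broise \cite{broise} carries out for the Jacobi--Perron algorithm, and the plan is to adapt it to the coarse partition $\{\Delta_0,\Delta_1\}$ using the explicit geometry of the fundamental quadrangulars described later in the paper.
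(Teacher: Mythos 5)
Your proposal is correct and follows essentially the same route as the paper, but there are a couple of points worth noting.

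For compactness, you and the paper both use Montel's theorem and the fact that every inverse branch maps $\overline{\Omega}$ strictly into $\Omega$ (Proposition~\ref{hol:domain}). Your framing is actually cleaner: you observe explicitly that the pullback $\Phi\mapsto\Phi\circ\hb_{a,b}$ factors through the compact restriction map $\Bw(\Omega)\to\Bw(K)$ for a fixed compact $K\Subset\Omega$ containing all $\hb_{a,b}(\overline{\Omega})$, and then sum. The paper's stated argument (``extract a subsequence of $\Phi_n$ converging uniformly on compacts; this yields compactness'') leaves the last step implicit; your version supplies the missing link.

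For the spectral gap, you correctly identify the Krasnoselski\u{\i}/Perron--Frobenius theorem for positive compact operators as the tool, and you correctly identify the hard part as showing strict positivity of some iterate. However, ``primitivity of the transition structure'' alone is not the whole story: the admissibility graph on $\{\Delta_0,\Delta_1\}$ being strongly connected and aperiodic is a necessary combinatorial input, but it does not by itself show that $\Lt_{s,w}^n\Phi$ is bounded away from zero everywhere for some $n$ once $\Phi\not\equiv 0$. The paper's argument uses the \emph{density of periodic points} (Proposition~\ref{dense:periodic}), combined with the Earle--Hamilton contraction coming from Proposition~\ref{hol:domain}: if $\Lt_{s,w}^n\Phi$ vanished at some $(i_n,x_n,y_n)$ for every $n$, then $\Phi$ would vanish on a set of images $\hb_{a,b}(x_n,y_n)$ that is dense in $I^2$ on both fibers $\{0\}\times\cdot$ and $\{1\}\times\cdot$, hence $\Phi\equiv 0$ by continuity. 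Your ``plan is to adapt'' phrasing defers exactly this step; you would need the density-of-orbits argument, not just the finite transition matrix, to finish.

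Two small inaccuracies: you state $0\le f\le 1$, but the paper's general framework only requires $f\circ\hb_{a,b}=O(\log a)$ (the exponential weight is then $a^{O(w)}$, still summable against $(a+x)^{-3\sigma}$ for $w$ close to $0$); and your $\Re s>2/3$ convergence threshold is right, whereas the paper's displayed ``$\Re(s)>1/3$'' in \S\ref{sub:transferop} appears to be a misprint.
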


A relation between $\Lt_{s,w}$ and $\Lc_{s,w}$ can be clearly seen. Set $(\kappa \Phi)(x,y):= \Phi_i(x,y)$ if $(x,y) \in \Delta_i$, then we get $\kappa(\Lt_{s,w} \Phi)=\Lc_{s,w}(\kappa \Phi)$. This specialisation shows that $\Lc_{s,w}$ has an eigenfunction $\kappa \Phi_{s,w}$ in  the  space $L^1([0,1]^2)$ with the same eigenvalue $\lambda_{s,w}$. Together with the density of periodic points in Proposition \ref{dense:periodic}, the spectral gap from Theorem A allows us to see that there exists an absolutely continuous invariant measure $\mu$ for the Ostrowski map, whose density is locally holomorphic (its explicit   expression, due to Ito \cite{Ito:86},  is given in Remark \ref{ito}). This further leads to:

\begin{thmL}[Proposition \ref{mixing} and Theorem \ref{clt}]
The Ostrowski dynamical system has exponential mixing with respect to $\mu$. Therefore, we have a central limit theorem for Birkhoff sums. For given $f$ under mild assumptions and for  $z \in \Rb$
\[ \mu \left\{ \frac{1}{\sqrt{n}} S_n f  \leq z   \right\} \longrightarrow \frac{1}{\sigma \sqrt{2 \pi}} \int_{-\infty}^z e^{-t^2/ 2 \sigma^2} dt  \]
for  a suitable constant $\sigma$ as $n$ goes to infinity.
\end{thmL}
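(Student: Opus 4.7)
My plan has two movements: a spectral-decomposition argument for exponential mixing, followed by a Nagaev--Guivarch perturbative argument for the central limit theorem.

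For exponential mixing, Theorem A at $(s,w) = (1,0)$ already gives that $\Lt := \Lt_{1,0}$ is compact on $\Bw(\Omega)$ with a simple largest eigenvalue. I would first check that this eigenvalue equals $1$ and that there are no other peripheral eigenvalues, using the density of periodic points in Proposition \ref{dense:periodic} to rule out additional eigenvalues of modulus $1$. The classical spectral decomposition then reads $\Lt^n = \Pi + R^n$, with $\Pi$ the rank-one projection onto $\Cb \cdot \Phi_{1,0}$ and $\|R^n\| \leq C \theta^n$ for some $\theta < 1$. Transferring back to $\Lc$ via the intertwining $\kappa \Lt_{1,0} = \Lc_{1,0} \kappa$ and writing correlations by duality
$$\int (\phi \circ S^n)\, \psi \, d\mu = \int \phi \cdot \Lc^n(\psi h)\, dx\, dy,$$
where $h$ is the density of $\mu$, yields
$$\left| \int (\phi \circ S^n)\,\psi\,d\mu - \mu(\phi)\mu(\psi) \right| \leq C\,\theta^n\,\|\phi\|_\infty\,\|\psi h\|_{\Bw}$$
for $\psi h$ in the locally holomorphic class, with an approximation argument enlarging the admissible observables if needed.

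For the central limit theorem, I would apply the Nagaev--Guivarch method. Viewing $w$ as a complex parameter near $0$, I check that $w \mapsto \Lt_{1,w}$ is an analytic family of bounded operators on $\Bw(\Omega)$, the only dependence being the analytic multiplier $\exp(w f)$ under the assumption that $f$ is bounded and locally holomorphic (the ``mild assumptions'' in the statement). Kato's analytic perturbation theory then provides, for $w$ in a neighborhood of $0$, a simple dominant eigenvalue $\lambda_{1,w}$ and spectral projector $\Pi_{1,w}$ depending analytically on $w$, with a uniform spectral gap. The key identity
$$\int \exp(w\, S_n f)\, d\mu = \lambda_{1,w}^n\, \mu(\Pi_{1,w} h) + O((\theta')^n),\qquad \theta' < |\lambda_{1,w}|,$$
follows from iterating $\int (\phi \circ S^n) \psi e^{w S_n f} dx = \int \phi \cdot \Lc_{1,w}^n \psi \, dx$ with $\phi \equiv 1$, $\psi = h$. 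Setting $w = it/\sqrt{n}$ and expanding
$$\log \lambda_{1,it/\sqrt{n}} = \frac{it}{\sqrt{n}}\,\mu(f) - \frac{t^2}{2n}\,\sigma^2 + O(n^{-3/2}),$$
with
$$\sigma^2 = \mu(f^2) - \mu(f)^2 + 2 \sum_{k \geq 1} \bigl(\mu(f \cdot f \circ S^k) - \mu(f)^2\bigr)$$
(the series converges by the exponential mixing established above), yields convergence of the characteristic function of $(S_n f - n\mu(f))/\sqrt{n}$ to $e^{-t^2 \sigma^2/2}$, and Lévy's continuity theorem concludes.

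The main technical obstacle I anticipate is verifying that $w \mapsto \Lt_{1,w}$ is indeed analytic with a uniformly bounded operator norm on $\Bw(\Omega)$, so that the spectral gap from Theorem A persists under perturbation. The space $\Bw(\Omega)$ was custom-built following Mayer's strategy to accommodate the characteristic functions $\ib_{SI_{a,b}}$ and the transition pattern encoded by $A_{\tau_{a,b}(i)}^{(a,b),i}$; one must check that inserting the multiplier $\exp(wf)$ preserves the compactness and holomorphy estimates. A secondary point is the non-degeneracy $\sigma^2 > 0$, which amounts to showing that $f$ is not a measurable coboundary for $S$; this is standardly handled by a Livšic-type argument, once again leveraging the density of periodic orbits of $S$ given by Proposition \ref{dense:periodic}.
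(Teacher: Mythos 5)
Your proposal is correct and takes essentially the same route as the paper: exponential mixing via the spectral decomposition of $\Lt_{1,0}$ (with the eigenvalue $1$ identified through the intertwining $\kappa$), then the Nagaev perturbative method with the transfer-operator identity $\Eb[e^{w S_n f}] = \int \Lc_{1,w}^n \varphi_* \, dm$, the Green--Kubo formula for $\sigma^2$, and analytic perturbation theory for $\lambda_{1,w}$ near $w=0$ (which the paper sets up in Remark \ref{rem:sg}). The one small divergence is that the paper makes the non-degeneracy $\sigma^2 > 0$ a hypothesis on $f$ (namely that $f$ is not a coboundary $g - g\circ S$ in $\kappa(\Bw(\Omega))$), while you propose to derive it by a Liv\v{s}ic-type argument via density of periodic orbits---a valid alternative, but not the route the paper takes.
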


Remark that this comes from the argument that the moment generating function of a Birkhoff sum $S_n f$ can be written in terms of transfer operators $\Lt_{1, \frac{it}{\sqrt{n}}}^n$ (following now the classical  approach initiated by Nagaev, see e.g. Broise \cite{Broise:96}). Thus specialising $f$, we observe a Gaussian behavior for  various Diophantine parameters.

Finally, we consider the operator $\Lc_{N,s}:=\Lc_{N,s,0}$, whose summand is constrained with digits $(a,b)$ satisfying the condition in (\ref{set:hausdorff}). Notice that the operator  $\Lc_{N,s} $ satisfies Theorem A with the largest eigenvalue $\lambda_{N,s}$. Together with   explicit estimates on the diameter of the partition $I_{a,b}$  (see Proposition \ref{prop:diameter}) and  a  bounded  distortion property of the Jacobian determinant (see Proposition \ref{distortion}), we have the following spectral description for the Hausdorff dimension of $E_N$.

\begin{thmL}[Proposition \ref{up:hausdorff} and \ref{low:hausdorff}]
Consider the restriction $S|_{E_N}$ and its  associated transfer operator $\Lc_{N,s}$. Then we have
\[ 3 s_1/ 2 \leq \dim_{H}(E_N) \leq \min \{s_2, 3s_1\} \]
for real variables $s_1, s_2$ satisfying $\lambda_{N,s_1}=1$ and $(N+1)^{2s_2} \lambda_{N,s_2}=1$.
\end{thmL}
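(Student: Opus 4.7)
The strategy is to read the two bounds as a Bowen--Ruelle thermodynamic formalism for the (non-conformal) restriction $S|_{E_N}$. The lower bound is obtained by constructing a Gibbs-type invariant measure carried by $E_N$ and applying the mass distribution principle; the upper bound comes from the natural cover of $E_N$ by rank-$n$ dynamical cylinders and a summation estimate directly tied to iterates of $\Lc_{N,s}$. Both arguments rely on two geometric inputs already singled out in the statement: the explicit diameter control of the partition cells (\propref{prop:diameter}) and bounded distortion of the Jacobian (\propref{distortion}).

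For the lower bound, I would start from Theorem A applied to $\Lc_{N,s_1}$ with $\lambda_{N,s_1}=1$: extract the positive leading eigenfunction $h_{s_1}$ and, dually, a conformal eigenmeasure $\nu_{s_1}$ with $\Lc_{N,s_1}^{\ast}\nu_{s_1}=\nu_{s_1}$, and form the $S$-invariant measure $\mu_{s_1}=h_{s_1}\nu_{s_1}$, supported on $E_N$. Bounded distortion then yields, on every rank-$n$ cylinder $I^{(n)}$ of the restricted dynamics,
$\mu_{s_1}(I^{(n)})\asymp |\det DS^{-n}|^{s_1}\asymp q_n^{-3s_1}$,
where $q_n$ is the continued-fraction denominator of the first $n$ partial quotients. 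The asymmetric contraction of $DS^{-n}$ (singular values comparable to $q_n^{-1}$ and $q_n^{-2}$) means that $I^{(n)}$ has $x$-width $\asymp q_n^{-2}$ and $y$-width $\asymp q_n^{-1}$. Given a small $r>0$, I would choose $n$ with $q_n\asymp r^{-1/2}$, so that $r$ matches the smaller cell dimension; a ball $B(p,r)$ with $p\in E_N$ then meets only $O(1)$ rank-$n$ cylinders (one in the $y$-direction, since $r\ll q_n^{-1}$, and a bounded number in the $x$-direction). Hence $\mu_{s_1}(B(p,r))\le C\, q_n^{-3s_1}=C\,(q_n^{-2})^{3s_1/2}\asymp r^{3s_1/2}$, and the mass distribution principle yields $\dim_H E_N\ge \dim_H \operatorname{supp}(\mu_{s_1})\ge 3s_1/2$.

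For the upper bound, cover $E_N$ by the family $\{I^{(n)}\}$ of all rank-$n$ cylinders meeting $E_N$. Using \propref{prop:diameter} to bound $\operatorname{diam}(I^{(n)})$ in terms of $q_n$ and \propref{distortion} to transfer pointwise Jacobian values to uniform ones, the key estimate is the conversion of a diameter ($\asymp q_n^{-1}$) into the Jacobian weight ($\asymp q_n^{-3}$), whose discrepancy is controlled by the digit constraint $a_i\le N$; this produces a factor of order $N^{2}$ per iterate, leading to
\[
\sum_{I^{(n)}}\bigl(\operatorname{diam}(I^{(n)})\bigr)^{s_2}\;\lesssim\; (N^{2})^{\,n}\sum_{I^{(n)}}|\det DS^{-n}|^{s_2}\;\asymp\; (N^{2})^{\,n}\,\bigl(\Lc_{N,s_2}^{n}\ib\bigr)(p).
\]
By Theorem A, the right-hand side decays like $(N^{2}\lambda_{N,s_2})^{n}$. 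Whenever $N^{2}\lambda_{N,s_2}<1$ this forces $\mathcal{H}^{s_2}(E_N)=0$, and letting $s_2$ approach the critical value determined by $N^{2}\lambda_{N,s_2}=1$ gives $\dim_H E_N\le s_2$.

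The main obstacle is exactly the non-conformality of $S$: the two singular values of $DS^{n}$ at a point of $E_N$ behave like $q_n$ and $q_n^{2}$ rather than as a single scalar factor, so the usual Bowen--Ruelle identity $\dim_H=s_{\ast}$ with $\lambda(s_{\ast})=1$ cannot hold, and a gap appears between the two sides. The exponent $3/2$ in the lower bound and the multiplier $N^{2}$ in the upper bound both encode this anisotropy through the interplay between cell diameter, cell area (determinant) and the operator weight $(a+x)^{-3s}$. Implementing the two estimates sharply depends on the quadrilateral geometry of the Ostrowski cells (Figures~\ref{fig:3} and \ref{fig:3bis}) to convert cell diameters into Jacobian factors with only a controlled, $N$-dependent loss, and on the bounded-distortion property to make the transition from pointwise Jacobians to sums over inverse branches of $\Lc_{N,s}$.
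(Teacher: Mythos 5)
Your overall strategy for both bounds coincides with the paper's: for the upper bound you cover $E_N$ by rank-$n$ cylinders, convert diameters to Jacobians at the cost of $N^{2n}$, and invoke the spectral description of $\Lc_{N,s}^n \mathbf{1}$; for the lower bound you build an eigenmeasure (the paper works directly with the dual eigenmeasure $\mu_{N,s}$ rather than $h_{s_1}\nu_{s_1}$, which is immaterial) and apply the mass distribution principle using the anisotropic cell geometry. The upper bound argument is essentially identical to Proposition~\ref{up:hausdorff}.

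The gap is in the lower bound, specifically in the sentence ``a ball $B(p,r)$ then meets only $O(1)$ rank-$n$ cylinders.'' This is exactly the non-trivial geometric statement that the paper's proof of Proposition~\ref{low:hausdorff} spends most of its effort establishing, and it cannot be dismissed with a one-line remark about $r \ll q_n^{-1}$ in the $y$-direction. The point is that $p \in E_N$ may sit very close to the boundary of some ancestor cylinder $I_{n-1}$ (or deeper ancestors), so that $B(p,r)$ crosses into a neighboring ancestor where the rank-$n$ subcells could a priori have quite different sizes. What saves the argument is the presence of the ``holes'' $K_m(a_1,\dots,a_m)$ --- the deleted subcylinders with $a_{m+1}>N$ --- which separate consecutive restricted-digit cylinders by gaps of width comparable to $q_m^{-2}$. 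The paper makes this precise: taking $n$ to be the largest integer for which $B(\delta)$ is contained in a single $I_{n-1}$, it uses the explicit vertex coordinates from Proposition~\ref{prop:diameter} to show that the resulting geometry forces $\delta \geq \frac{1}{2(N+2)q_{n-1}^2}$, from which the covering count by depth-$(n+\ell)$ cylinders is bounded by $N^{\ell+1}$ with $\ell$ depending only on $N$. Your proposal takes this bounded-count claim as given; to close it you would have to reproduce, in some form, the quantitative separation between adjacent cylinders that the hole structure provides.

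Two minor remarks. First, your assertion ``$\mu_{s_1}(I^{(n)}) \asymp q_n^{-3s_1}$'' only gives an upper bound from the Gibbs/eigenmeasure property plus bounded distortion, which is all that's needed; a matching lower bound would require a Gibbs inequality in both directions, but the mass distribution principle does not need it. Second, in the upper-bound summation, converting $\operatorname{diam}(I^{(n)})^{s}$ via Lemma~\ref{fundinterval} actually produces the factor $N^{2ns}$ rather than $N^{2n}$; since $s\leq 1$ in the relevant range this is absorbed, and the paper uses the same crude $N^{2n}$, but it is worth being aware of where the slack is.
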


When the upper and lower bound is the same, this type of implicit characterisation is  usually  called a Bowen--Ruelle formula. Bowen \cite{bowen} first showed that the Hausdorff dimension of a conformal repeller for quasicircles is equal to the zero of a pressure function. This study was  further developed by Ruelle \cite{ruelle} for more general hyperbolic settings in terms of transfer operators. For the continued fraction case, there have been analogous results in the setting  of  homogeneous Diophantine approximation (see e.g. \cite{cesaratto:val,hens1,hens2,jenkinson}).  

We remark that Theorem C is among  the first such examples in inhomogeneous approximation and also for  maps
related to multidimensional  continued fractions; note  that estimates for the Hausdorff dimension   of the Rauzy gasket  and for the Arnoux--Rauzy continued fractions    have been established in \cite{AHS:16,Fougeron,GM:20}.
However, the present  approach (i.e., the choice of coverings by  the  quadrangulars from Proposition \ref{prop:diameter})  is not sufficient for  obtaining  a more precise spectral description  in higher dimension with non-conformality. Note that the upper bound in Theorem C  is not expected to be   optimal; also the quantity  $s_1$  from Theorem C is related to an analogous   quantity for the Gauss map.
 We plan to refine such estimates in a  subsequent  paper.

\medskip

We outline the contents  of this paper as follows. In \S\ref{prep}, we discuss basic dynamical properties of the Ostrowski map and of its  transfer operator. In \S\ref{mod:ostrowski}, we introduce a finite  modified Markov partition and  its  associated generalised transfer operator. In \S\ref{sec:spectral}, we study the  spectral properties of these generalised  transfer operators acting on a suitable Banach space. This leads us to have a central limit theorem in \S\ref{sec:clt} and  elements of  description for the Hausdorff dimension of a fractal set  of  bounded digit  type   arising from the Ostrowski expansion in \S\ref{sec:diophantine}. In \S\ref{ap:ost}, we give a supplementary exposition to some arithmetic identities as well as  arithmetical  proofs.

\subsubsection*{Acknowledgements}

We would like to thank Fran\c{c}ois Ledrappier for his careful reading and   for  significantly improving the  first draft. We  thank Eda Cesarrato and Lingmin Liao for suggesting the refinements in  Proposition \ref{up:hausdorff} and Proposition \ref{low:hausdorff}, and also 
  Viviane Baladi, Charles Fougeron, Brigitte Vall\'ee  and Benedict Sewell for stimulating discussions and clarifying comments. 
Finally, we owe much to the referee for   his/her perceptive suggestions.

\section{The Ostrowski  dynamical system} \label{prep}

\subsection{Arithmetical identities} \label{arith:iden}

Let  $I^2=[0,1)^2$. 
The Ostrowski  map $S: I ^2\rightarrow I^2$ is given by  
\[ S(x,y)= \left( \{1/x \} , \{y/x\} \right)  \mbox{ for } x \neq 0, \quad S(0,y)= \left(0,0 \right)   .\] 
This is a skew-product extension of the Gauss map, which is defined on the unit interval $I=[0,1)$  by $ T:  x \mapsto \{1/x\}$. 
For  $(x,y) \in I^2$, set $(x_0,y_0):=(x,y)$ and $(x_i,y_i):=S^i(x,y)$ for all $i \geq 1$.  Then the sequence of  (pairs of) digits produced  by  the Ostrowski map is defined  as follows, for all  positive integer $i$:  
$$(a_i,b_i)= \left(\left\lfloor \frac{1}{x_{i-1}} \right\rfloor, \left\lfloor \frac{y_{i-1}}{x_{i-1}} \right\rfloor \right) \mbox{ 
if } x_{i-1}\neq 0,  \mbox{ otherwise }(a_i,b_i)=0.$$
We also recall that $(a_i)_{i}$ provides the  sequence of partial quotients in the continued fraction expansion of  $x=[0;a_1,a_2, \cdots]$, 
   $\frac{p_i}{q_i}=[0;a_1,\cdots, a_i]$ stands for  the $i$-th convergent of $x$,   and $\theta_i :=q_i x - p_i$ for all $i \geq 1$.

   We first  state some classical identities concerning  regular continued fractions. Let  $x \in [0,1)$.
   Then, for $ n \geq 0$, one has
\begin{equation} \label{eq:ybis}
x= \frac{p_n + T^n (x) p_{n-1}}{q_n + T^n (x) q_{ n-1}}
\end{equation} \label{eq:y}
 and 
\begin{equation} \label{eq:yter}
x T(x) \cdots  T^{n-1}(x)= \frac{1}{q_{n}+ T^{n}(x) q_{n-1} }=  | q_{n-1} x -p_{n-1}| =|\theta_{n-1}|.
\end{equation}
Note that   (\ref{eq:y})    and   (\ref{eq:yter})  can   easily be deduced  from the   classical matricial representation of continued fractions\footnote{An analogous matricial representation 
for the Ostrowski map $S$ is  given in Proposition \ref{prop:diameter}.}. Indeed, one has 
\begin{align*}
\left[\begin{array}{l}
x\\
1
\end{array}\right] &=x T(x) \cdots T^{n-1}(x)  \left [\begin{array}{ll}
0 & 1\\
1 & a_1 
\end{array}\right] \cdots \left [\begin{array}{ll}
0 & 1\\
1 & a_n 
\end{array}\right]   \begin{bmatrix}
T^{n}(x)\\
1
\end{bmatrix} \\
& =  x T(x) \cdots T^{n-1}(x)  \left [\begin{array}{ll}
p_{n-1} & p_{n}\\
q_{n-1}& q_{ n}
\end{array}\right]  \begin{bmatrix}
T^{n}(x)\\
1
\end{bmatrix}.
\end{align*}

 For the action  of the Ostrowski map $S$ on the skew coordinate, note that we have $y_{i-1}=x_{i-1}(y_i+b_i)$  ($i \geq 1$),   and thus this yields for all positive $n$ that 
\[ y = x_0 x_1 \cdots x_{n-1} y_n +\sum_{i=1}^n x_0 x_1 \cdots x_{i-1} b_i . \]

Note that the series $\sum_{i \geq 1 }^{\infty}  b_i  |\theta_{i-1}|$ is convergent (with $0 \leq b_i \leq a_i$ for all $i$).
Indeed,   for all  positive $i$, one has 
$ b_i|\theta_{i-1}|=b_i |q_{i-1} x -p_{i-1}| \leq \frac{a_i} {q_{i}} \leq \frac{1}{q_{i-1}}$; we conclude by observing that $ q_i \geq  (\frac{1+ \sqrt 5}{2})^i$ for all $i$.

Together with the identity $x_0 x_1 \cdots x_i = (-1)^i \theta_i= | \theta_i|$ ($i\geq 0$), we deduce  that the sequence of digits  $(b_i)_{i}$
provides  the expansion of  $y$   in terms of the Ostrowski numeration system from Proposition \ref{intro:ost}.

Observe moreover  by (\ref{eq:yter}) that 
$$  0 \leq  y - \sum_{i=1}^n b_i |\theta_{i-1}|=y_n   x_0 \cdots  x_{n-1}   = y_n|\theta_{n-1}| =\frac{y_n}{q_{n} + q_{n-1} x_n} <1/q_n$$
since $ 0 \leq x_n, y_n \leq 1$.
Hence, by recalling  that  $\|\rdot \|$ stands for  the distance to the nearest integer, one 
has $\| y - \sum_{i=1}^n b_i |\theta_{i-1}| \| =y - \sum_{i=1}^n b_i |\theta_{i-1}|.$
Since  $\sum_{i=1}^n b_i |\theta_{i-1} |=\sum_{i=1}^n b_i  (-1) ^i \theta_{i-1} $  is congruent modulo 1 to  $( \sum_{i=1}^n b_i  (-1)^{i-1} q_i)x$, one 
gets  $$\|  y - (\sum_{i=1}^n b_i  (-1)^{i-1} q_i )x \|=y - \sum_{i=1}^n b_i |\theta_{i-1}|=  \sum_{i \geq n+1}^n b_i |\theta_{i-1}|.$$
Hence,    the sequence $(M_n)_n$,  with $M_n=\sum_{i=1}^n b_i q_{i-1}(-1)^{i-1}$ (such as defined in (\ref{eq:Mn})),    is such that 
$\| y- M_n x\| =  \sum_{i=n+1}^\infty b_i |\theta_{i-1}|   $.
Moreover, one has 
$|\theta_i|= |\theta_{i-2}| - a_i |\theta_{i-1}|$  for all $i \geq 1$.
One then deduces   by using telescoping sums  that for $n \geq 1$
\begin{equation} \label{eq:reste}
\sum_{i=n}^{\infty} a_i |\theta_{i-1}|= |\theta_{n-2}|+ |\theta_{n-1}| 
\end{equation} 
and thus 
 $\sum_{i=n+1}^{\infty} b_i |\theta_{i-1}| \leq  |\theta_{n-1}|+ |\theta_{n}|,$
 which implies that the sequence  $(M_nx)_n$ tends to $y$ modulo  $1$ at exponential speed.

We now   state a more complete version of Proposition \ref{intro:ost}, whose   proof  is given  below in \S\ref{ap:ost}.

\begin{prop}[Ostrowski expansions] \label{ost}
Let $x \in [0,1) \backslash \Qb$. Every real number $y \in [0,1)$ can be written  in the form 
\begin{equation} \label{expansion} 
y= \sum_{i=1}^\infty b_i |\theta_{i-1}| 
\end{equation}
where the sequence $(b_i)_i$  is given by the Ostrowski map  $S$ applied to $(x,y)$. 

Moreover, for any $(x,y) \in [0,1)^2$,   the sequence   $(a_i,b_i)_i$ of  digits coming from the trajectories of the  Ostrowski map $S$  satisfies the following admissibility properties:
\begin{enumerate}
\item   \label{ost:1} $0 \leq b_i \leq a_i$ for all $i \geq 1$;
\item  \label{ost:2}  if $a_i=b_i$ for some $i $, then $b_{i+1}=0$;   \qquad \qquad (Markov condition) 
\item  \label {ost:3} $a_i \neq b_i$ for infinitely many  odd  and  even indices $i$.
\end{enumerate}
Moreover, an  expansion  of the form   (\ref{expansion}) is unique  provided  that the   sequence of   digits $(b_i)$  satisfies these admissibility  conditions. 

\end{prop}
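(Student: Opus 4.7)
The plan is to prove existence, admissibility, and uniqueness in turn. For existence, the identity derived just above the statement gives
$$y - \sum_{i=1}^n b_i |\theta_{i-1}| = y_n |\theta_{n-1}|.$$
Since $x$ is irrational, $q_n\to\infty$, so by (\ref{eq:yter}), $|\theta_{n-1}|\to 0$; combined with $y_n\in[0,1)$, this yields $y=\sum_{i=1}^\infty b_i|\theta_{i-1}|$.

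For admissibility, condition (\ref{ost:1}) is immediate from $y_{i-1}<1$, which gives $\lfloor y_{i-1}/x_{i-1}\rfloor\leq\lfloor 1/x_{i-1}\rfloor$. For the Markov condition (\ref{ost:2}), if $a_i=b_i$, a direct computation gives $y_i - x_i = (y_{i-1}-1)/x_{i-1}<0$, so $b_{i+1}=\lfloor y_i/x_i\rfloor=0$. For (\ref{ost:3}), I argue by contradiction. The three-term recurrence $\theta_{i+1}=a_{i+1}\theta_i+\theta_{i-1}$ combined with the alternating sign of $(\theta_i)$ yields
$$|\theta_{i-1}|=a_{i+1}|\theta_i|+|\theta_{i+1}|,\qquad\text{equivalently}\qquad a_i|\theta_{i-1}|=|\theta_{i-2}|-|\theta_i|.$$
If $b_i=a_i$ for all even $i\geq N$, then by (\ref{ost:2}) we have $b_i=0$ for all odd $i\geq N+1$, and the telescoping sum $\sum_{i\geq N,\,i\text{ even}}a_i|\theta_{i-1}|$ equals $|\theta_{N-2}|$. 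But by existence this tail must equal $y_{N-1}|\theta_{N-2}|$ with $y_{N-1}<1$, a contradiction; the odd case is symmetric.

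For uniqueness, the key lemma is: for any sequence $(c_i)_{i\geq j+1}$ satisfying (\ref{ost:1})--(\ref{ost:3}) with respect to the same partial quotients $(a_i)$, one has $\sum_{i\geq j+1}c_i|\theta_{i-1}|<|\theta_{j-1}|$. I group indices into consecutive pairs $(j+2k-1,j+2k)$: in the Markov-saturated case $c_{j+2k-1}=a_{j+2k-1}$, $c_{j+2k}=0$, and the pair contributes exactly $|\theta_{j+2k-3}|-|\theta_{j+2k-1}|$; otherwise $c_{j+2k-1}\leq a_{j+2k-1}-1$ and $c_{j+2k}\leq a_{j+2k}$, and expansion gives contribution $\leq |\theta_{j+2k-3}|-|\theta_{j+2k-1}|-|\theta_{j+2k}|$. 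The weak bound telescopes to $|\theta_{j-1}|$, and (\ref{ost:3}) guarantees at least one pair falls in the strictly-smaller case, giving the required strict inequality. Granted the lemma, suppose $(b'_i)$ is another admissible sequence with $\sum b'_i|\theta_{i-1}|=y$; applying the lemma at $j=1$ gives $y/x-b'_1\in[0,1)$, so $b'_1=\lfloor y/x\rfloor=b_1$. Dividing by $|\theta_0|$ and noting that the shifted expansion is precisely an Ostrowski-type expansion of $y_1$ with respect to the partial quotients $(a_{i+1})_i$ of $x_1=T(x)$, iteration yields $b'_i=b_i$ for all $i$.

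The main obstacle is locating the pair that witnesses strict inequality in the uniqueness lemma: one must invoke (\ref{ost:3}) to ensure that infinitely many pairs miss the Markov-saturated value, and then extract a quantitative improvement over the weak telescoping bound. The tail-telescoping computation already carried out for admissibility (\ref{ost:3}) rehearses the same mechanism, so once that is established the uniqueness lemma follows along parallel lines.
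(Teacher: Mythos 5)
Your proposal is correct, and while the existence part and conditions (\ref{ost:1})--(\ref{ost:2}) are handled exactly as in the paper, and your telescoping proof of (\ref{ost:3}) (comparing $\sum_{i\ge N,\, i\ \mathrm{even}} a_i|\theta_{i-1}|=|\theta_{N-2}|$ with the tail identity $y-\sum_{i<N}b_i|\theta_{i-1}|=y_{N-1}|\theta_{N-2}|$, $y_{N-1}<1$) is the same mechanism the paper runs through the renormalised point and the identity $1=\sum_k a_{2k+1}|\theta_{2k}|$, your uniqueness argument takes a genuinely different route. The paper works at the first index $k$ of disagreement and splits into two cases according to whether $b'_{k+1}-b_{k+1}=a_{k+1}$, estimating tails only by the crude telescoped identity $\sum_{i\ge n}a_i|\theta_{i-1}|=|\theta_{n-2}|+|\theta_{n-1}|$; you instead isolate the sharp lemma that every admissible tail satisfies $\sum_{i\ge j+1}c_i|\theta_{i-1}|<|\theta_{j-1}|$, proved by pairing indices $(j+2k-1,j+2k)$ so that the Markov condition (\ref{ost:2}) caps a saturated pair at exactly $|\theta_{j+2k-3}|-|\theta_{j+2k-1}|$ while any non-saturated pair loses an extra $|\theta_{j+2k}|$, with (\ref{ost:3}) guaranteeing at least one such pair; uniqueness then follows by greedy digit recovery ($b'_1=\lfloor y/x\rfloor$, renormalise by $|\theta_0|$, iterate), or equally well by applying the lemma at the first disagreement. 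Your pair-by-pair estimates check out, and the lemma is strictly sharper than the crude bound --- which is precisely what is needed in the Markov-saturated case: the paper's second case ends, as printed, with $a_{k+1}|\theta_k|+|\theta_{k+1}|+|\theta_{k+2}|<|\theta_{k-1}|$, which cannot hold since $a_{k+1}|\theta_k|+|\theta_{k+1}|=|\theta_{k-1}|$, so that step really requires either your sharper tail bound or a further recursion. In short, the paper's first-discrepancy analysis is shorter in the generic case but delicate at the saturated one, whereas your lemma-plus-pairing organisation treats all cases uniformly, makes the roles of (\ref{ost:2}) and (\ref{ost:3}) transparent, and yields the digit-recovery formula as a byproduct.
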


\begin{rem} \label{rem:JP1}
The Ostrowski map is quite similar to  the 2-dimensional Jacobi--Perron continued fraction algorithm. The Jacobi--Perron map is  defined
on $I^2$ by $  \operatorname{T_{JP}}:  (x,y) \mapsto  \left( \{y/x\}, \{1/x \}  \right)$ if $x\neq 0$, and  $ (0,y) \mapsto  (0,0).$ 
It    produces a  multi-dimensional  continued fraction algorithm whereas  the Ostrowski map is    a skew-product of the Gauss map.
The digits produced  by    the Jacobi--Perron  algorithm satisfy   a  similar   Markovian condition.  
Indeed, if   $(a_n,b_n)= \left(\left\lfloor \frac{1}{x_{n-1}} \right\rfloor, \left\lfloor \frac{y_{n-1}}{x_{n-1}} \right\rfloor \right)$  with
$(x_n,y_n)= \operatorname{T_{JP}}^n (x,y)$ for all $n\geq 1$,  then one has for all $ n \geq 1$, $0 \leq b_n  \leq a_n$, $a_n  \geq 1$ and if $a_n =b_n$ then
$b_{n+1} \geq 1$.

For more on its  dynamical study and its  exponential convergence properties, see e.g.,  Lagarias  \cite{Lagarias:93}
 and  Broise \cite{broise}.
\end{rem}

\subsection{On Ostrowski's partition }  \label{prep:dynamic}

In this section, we study the Ostrowski map from a dynamical point of view. We introduce the    fundamental digit  partition  and discuss some basic properties.

Let us    first decompose $I^2=[0,1)^2$ into the following countable   partition  given  by the    ``cylinders'', i.e., the   sets of   real numbers in $I^2$   having the same   first   pair of digits   $(a,b)$ when applying the Ostrowski map $S$. More precisely,  for any $x \in (0,1)$, there exists a unique positive integer $a$ such that $a \leq \frac{1}{x} < a+1$
(i.e.,  $a= \lfloor 1/x \rfloor $). This means that it gives the  first digit of  the continued fraction expansion of $x$, i.e.,  $x=[0;a, \cdots]$. Then for any $y \in [0,1)$,
there exists a  unique integer $b$ (i.e., $b= \lfloor y/x \rfloor  \geq 0$) such that $bx  \leq  y  <(b+1)x$. Here, we always  have  $b \leq a$ since $y<1$. Thus for any integers $a,b$ satisfying $ a\geq 1$ and $0 \leq b \leq a$, set 
\[ I_{a,b}:=\left\{ (x,y) \in I^2 : \frac{1}{a+1} < x < \frac{1}{a}, \  bx <  y< (b+1)x   \right\} . \]
The set  $I_{a,b}$ is the interior of the set of points $ (x,y) \in I^2 $ satisfying $  \lfloor 1/x \rfloor=a ,$  $ \lfloor y/x \rfloor  =b.$
Let us denote by $\Ac$ the set of  digits $(a,b)$, i.e.,  $$\Ac= \{(a,b)\in {\mathbb Z}^2:  1 \leq a,   \ 0 \leq b \leq a \}.$$ Then it follows that $\{ I_{a,b} \}_{(a,b) \in \Ac}$ forms a countable  topological  partition for $I^2$, called the   {\em  fundamental digit partition}. The atoms of this partition are called  {\em fundamental digit  sets}. 
They are  either  triangles or  quadrangulars. See Figure \ref{fig:1} for an illustration.

 \begin{figure}[h] \label{partitioncell}
   \centering
   \includegraphics[width=2.5 in]{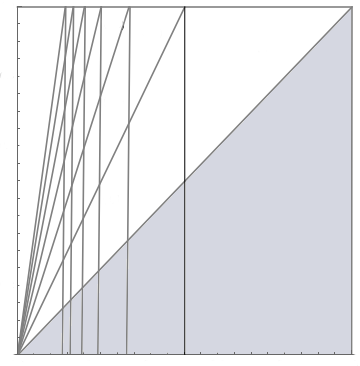} 
   \caption{Partition $I_{a,b}$ for $(a,b) \in \Ac$. The set  $\Delta_0$ is    depicted in light grey.}
   \label{fig:1}
\end{figure}
The  restriction of the  map $S$  on  each partition element $I_{a,b}$ is  one-to-one, hence it is  a bijection onto its image $S(I_{a,b})$ (given in Lemma \ref{markov} below). 
For any $(a,b) \in \Ac$, let  $\hb_{a,b}$ stand  for the inverse  branch of the restriction of $S$  to $I_{a,b}$. Note that it   has a  simple homographic  form:
\begin{equation} \label{inv:form}
 \hb_{a,b}(x,y) = \left(  \frac{1}{a+x}, \frac{b+y}{a+x}  \right)   . 
 \end{equation}
To describe  the image $S(I_{a,b})$, we divide the index set $\Ac$ into two disjoint subsets. Set $$\Ac_{0}:=\{(a,b)  \in \Ac: a=b \}  \mbox{ and }
\Ac_1:=  \{(a,b) \in \Ac :  0 \leq   b <a \} .$$
We may also divide    $I^2$ into two pieces, namely  
$$\Delta_0  = \left\{ (x,y) \in (0,1)^2 :y <x  \right\}  \mbox{ and  }   \Delta_1 = \left\{ (x,y) \in (0,1)^2 : x  < y   \right\} .$$
We  stress the fact that  the first   Ostrowski    pair of digits of the 
elements in $\Delta_0$   does not belong to  $\Ac_{0}$ as this notation might suggest.  However, we take this notation for providing   a  convenient expression of the Markov condition of Proposition \ref{ost} with respect to the inverse branch maps   $\hb_{a,b}$  such as  done  below in \S\ref{mod:mayer}.  

One has (by considering equality up to sets of zero measure)
 \begin{equation} \label{eq:markov}
 \Delta_0:=\bigcup_{a \geq 1} I_{(a,0)} \ \mbox{and  } \Delta_1:=I \backslash \Delta_0=  \bigcup_{(a,b)\in \Ac \atop   b \geq 1} I_{(a,b)} . 
\end{equation}

Now we can understand the admissibility properties from Proposition \ref{ost} in an alternatively way. Indeed,  next lemma  means that  the partition $\{I_{a,b}\}_{(a,b) \in  \Ac}$ is a  Markov partition  by   \eqref{eq:markov}.
This  also suggests us to express the associated transfer operator in an explicit way  taking into account  the  Markov condition (which will be done  below in  \S\ref{mod:mayer}).

\begin{lem} \label{markov}
The following holds for the Ostrowski map $S$:
\begin{align*}
&\mbox{ For  }(a,b) \in \Ac_0=\{(a,b)  \in \Ac: a=b \}, &S(I_{a,b})=\Delta_0. \\
&\mbox{ For }(a,b) \in \Ac_1=\{(a,b)  \in \Ac: 0  \leq b <a \}, &S(I_{a,b})=(0,1)^2. 
\end{align*}
\end{lem}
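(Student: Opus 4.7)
My approach is to unpack the definition of $S$ on each $I_{a,b}$ and read off the image via elementary inequalities; everything reduces to tracking which of the constraints $0 \leq u, v < 1$ are forced automatically by membership in $I_{a,b}$ and which need to be imposed as additional conditions on the image.

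I would begin by observing that, on $I_{a,b}$, the map acts as $(x,y) \mapsto (1/x - a, \, y/x - b)$. The defining inequalities $1/(a+1) < x \leq 1/a$ and $bx \leq y < (b+1)x$ of the fundamental digit set force $u := 1/x - a$ and $v := y/x - b$ to lie in $[0,1)$, so $S(I_{a,b}) \subseteq I^2$ with no further argument. The content of the lemma is therefore to determine which extra restriction on $(u,v)$ is induced by the ambient constraint $y < 1$ (which can cut off part of the $y$-range in $I_{a,b}$).

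The key dichotomy is whether $(b+1)x$ can exceed $1$ on $I_{a,b}$. When $(a,b) \in \Ac_1$, i.e.\ $b < a$, one has $(b+1)x \leq ax \leq 1$, so the cap $y<1$ is automatic and does not truncate the $y$-fiber; I would verify that the full square $I^2$ is attained by exhibiting, for each $(u,v) \in I^2$, the candidate preimage $\hb_{a,b}(u,v) = \bigl(1/(a+u),\,(b+v)/(a+u)\bigr)$ and checking directly that it satisfies the defining inequalities of $I_{a,b}$, which is a routine substitution using $b+v < a \leq a+u$. When $(a,b) \in \Ac_0$, i.e.\ $a = b$, the cap $y<1$ is genuinely active; substituting the explicit formulas yields $y = (a+v)/(a+u) < 1 \iff a+v < a+u \iff v < u$, which is precisely the defining inequality of $\Delta_0$. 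This gives $S(I_{a,a}) \subseteq \Delta_0$, and the reverse inclusion is obtained in the same way, by checking that $\hb_{a,a}(u,v) \in I_{a,a}$ for any $(u,v) \in \Delta_0$.

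No genuine obstacle arises; the only point to be careful about is the boundary bookkeeping between the half-open intervals appearing in the definition of $I_{a,b}$ and the floor-function conventions used to define the digits $(a,b)$. These boundary issues affect only a null set and do not alter the set-theoretic equalities stated in the lemma, so they can be handled with a brief remark rather than a case analysis.
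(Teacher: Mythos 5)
Your proof is correct and follows essentially the same approach as the paper: unwind the action of $S$ on $I_{a,b}$, observe that the ambient constraint $y<1$ forces $v<u$ exactly when $b=a$ (the paper phrases this as $y/x<1/x$, you as $(a+v)/(a+u)<1 \iff v<u$, which is the same computation), while for $b<a$ the $y$-fiber is untruncated and one checks that every $(u,v)\in I^2$ has a preimage in $I_{a,b}$. Incidentally, your version of the $x$-range, $1/(a+1)<x\leq 1/a$, is the one consistent with $a=\lfloor 1/x\rfloor$ and with the later identity $I_{a,b}=\hb_{a,b}(I^2)\cap I^2$, so the discrepancy with the printed inequalities in the paper's definition of $I_{a,b}$ is a harmless typo in the paper, not a gap in your argument.
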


\begin{proof}
From the expression \eqref{inv:form}, it is clear  that $I_{a,b}=\hb_{a,b}((0,1)^2)$ for $(a,b) \in \Ac_1$ and $I_{a,b}=\hb_{a,b}((0,1)^2) \cap I^2=\hb_{a,b}(\Delta_0)$ for $(a,b) \in \Ac_0$. Since $S|_{I_{a,b}}^{-1}=\hb_{a,b}$ is bijective (being  an homography), the statement follows. 
\end{proof}

\hide{------

\begin{proof}
Let $(a,b) \in \Ac_0$  and let  $(x,y) \in (0,1)^2$. From $y/x<1/x$, we have $ \frac{1}{x}-a >\frac{y}{x}-a 
$ and  $S(x,y)= \left( \frac{1}{x}-a, \frac{y}{x}-a  \right) \in \Delta_0$.  Conversely, if $(x,y) \in  \Delta_0$, then 
$\hb_{a,a}(x,y) \in  I^2$. Hence $S(I_{a,a})=\Delta_0$.
Suppose  now that $(a,b) \in \Ac_1$.  Let  $(x,y) \in (0,1)^2$.  It suffices to check that $\hb_{a,b}(x,y) \in  (0,1)^2$ and this comes from  $0 <\frac{b+y}{a+x} \leq \frac{ a-1+y}{a+x} <1$.   
\end{proof}

---------}

\begin{rem}\label{rem:triangle}
Note that for  $(a,b)$ in $ \Ac_0,$ $I_{a,b}$ is a triangle and  for  $(a,b)$ in $ \Ac_1,$ $I_{a,b}$ is a quadrangular. Hence Lemma  \ref{markov} is consistent  with the fact 
 $\hb_{a,b}$  being  an homography, i.e.,  a triangular is sent  by $\hb_{a,b}$ onto a triangular, and similarly  a quadrangular is mapped onto a quadrangular.
 \end{rem}
We  will need the following   notation which corresponds to  refining the fundamental digit partition   by fixing a  finite number of  consecutive pairs of digits. 
\begin{notation}\label{not:h}
In all that follows, we   write $\Ac^n$ for the set of $n$-tuples of indexes $(a,b)=((a_1,b_1), \ldots, (a_n,b_n))$, and   for any $(a,b) \in \Ac^n$ $$\hb_{a,b}:=\hb_{a_1,b_1} \circ \cdots \circ \hb_{a_n,b_n}$$ stands  for the corresponding depth $n$ inverse branch.
We  also use the notation    (see e.g.  \eqref{op:iterates})
\begin{equation} \label{eq:hk}
\hb_{a,b}^{(k)}:= \hb_{a_k,b_k} \circ \cdots \circ \hb_{a_n,b_n},   \quad   1 \leq k \leq n.
\end{equation}
Now  define $$I_{a,b}:= \hb_{a,b}(0,1)^2 \cap (0,1)^2.$$
The topological   partition $\{I_{a,b}\}_{(a,b) \in \Ac^n}$ is called the  {\em partition by  fundamental sets  of depth $n$}. 
\end{notation}
We also  introduce  a  further notation that will allow the description in the next proposition  of   the fundamental sets in  the  case  of  bounded  pairs of digits  under study  in this paper. 
\begin{notation} \label{nota:In}
For $N \geq 2$,  let \[ \Ac_N=\{ (a,b)  \in \Ac: a \leq N  \mbox{ and }  b=a-1\}.\]
Let $ (a,b ) \in {\mathcal A}_N^n$. 
Set  $$I_{n}  (a_1, \cdots, a_n):= I_{a,b} \mbox{  for  }(a,b)= ((a_1,a_1-1), (a_2,a_2-1), \cdots, (a_n,a_n-1)).$$
We also  define $$A_n(a_1,\cdots,a_n):= \hb_{a,b}(0,0), \ B_n(a_1, \cdots, a_n):= \hb_{a,b}(0,1),$$
$$ C_n(a_1, \cdots,a_n):= \hb_{a,b}(1,1), \  D_n(a_1, \cdots, a_n):=\hb_{a,b}(1,0).$$
When there exists  no   risk of confusion, we use  the shortcut  $A_n$ for $A_n(a_1,\cdots,a_n)$,  with  the same holding  also   for  $B_n$, $C_n$, $D_n$.
\end{notation}

Now we   provide explicit  estimates  for the  measure and the  size
of fundamental sets  of depth $n$.
The proof of next proposition is given in \S \ref{ap:ost}.

\begin{prop} \label{prop:diameter}
Let $(a,b) \in \Ac ^n$.   Set  $q_{-1}=0$, $p_{-1}=1$, $q_0=1$, $p_0=0$, and 
for all  positive $k$ with  $ k \leq n-1 $,  let 
$q_{k +1}=a_{k +1} q_k +q_{k-1}$,   $p_{k +1}=a_{k +1} p_k +p_{k-1}$, and $\alpha_k=\sum_{i=1}^{k}    b_i | q_{k}p_{i-1}- p_k q_{i-1}|$.
Then, the    matrix  of the homography  $ \hb_{a,b}$  is  equal to
\begin{equation} \label{mat:form}
\begin{bmatrix}
p_{n-1} & 0 & p_n\\
\alpha_{n-1} & 1 & \alpha_n\\
q_{n-1} & 0 & q_n
\end{bmatrix},
\end{equation}
\mbox{i.e., } 
$$
\hb_{a,b} (x,y)= \left(\frac{p_{n-1} x + p_n}{q_{n-1} x + q_n}, \frac{\alpha_{n-1} x +y+ \alpha_n}{q_{n-1} x + q_n}\right),$$ 
and  its 
 Jacobian determinant  $ \Jbb_{\hb_{a,b}} $  thus  satisfies for all $(x,y) \in I^2$:
$$|\Jbb_{\hb_{a,b}}(x,y)| = \left(\frac{1}{q_{n} +x q_{n-1}}\right)^3.$$
 Furthermore,  the   diameter    $\mathrm{diam}(I_{a,b})$ of  $I_{a,b}$ is in  $O(1/q_n)$. 

  If moreover  $(a,b) \in  {\mathcal A}_N^n$, then $I_{a,b}$ is  a trapezium whose 
  (Lebesgue) measure    equals
$$ \mathrm{meas}(I_{a,b}) = \frac{q_{n-1}+2q_n}{2q_n^2 (q_n +q_{n-1})^2},$$  
 and  whose diameter  satisfies  $$ \frac{1}{q_n + q_{n-1}} \leq \mathrm{diam}(I_{a,b}) \leq  \frac{3}{q_n+q_{n-1}}.$$ 
 
 More precisely,   the trapezium $I_n (a_1, \cdots, a_n)$  is  inscribed  in  a  rectangle with  parallel  vertical sides      of  width $\frac{1}{q_n (q_n +q_{n-1})}$
   and  height in   $ \Theta(1/q_n)$.
   If $n$ is odd (resp.  even),   then both  points $C_n=\hb_{a,b}(1,0)$ and  $D_n=\hb_{a,b}(1,1)$  are  located on a  vertical line      which is on the  left  (resp. right) of the vertical line   containing $A_n= \hb_{a,b}(0,0)$  and $B_n=\hb_{a,b}(0,1)$. 
\end{prop}

\begin{rem}It is a classical result about homographies that is   $h_A$ is a homography,  with $A \in  \mathrm{GL}(d + 1, {\mathbb R})$, then for all 
${\bf x} \in {\mathbb  R}^d $,
  its Jacobian  at ${\bf x}$   can be expressed as
the quotient of the  determinant of $A$ (expressed at ${\bf x}$) by  the  denominator  of $h_A$ raised to the power  $d+1$  (see  e.g.,   \cite[Proposition 5.2]{Veech:78}).
For the Jacobi--Perron algorithm,   the expression of the Jacobian 
involves   denominators   where the variable  $y$ also occurs.  However, what is crucial in the Ostrowski  case is  that there is no dependence on the  $y$-coordinate, in constrast to the Jacobi--Perron case. 
\end{rem}

\hide{
\[ I_n(a,b)=\{ (x,y) \in I : x=[0;a_1, \cdots, a_n, S_1^n(x)]  \ \mbox{and } y= \sum_{i=1}^n b_i |\theta_{i-1}| + S_2^n(y) |\theta_{n-1}|  \}  \] 
where $S_j=\pi_j S$ is the projections for $j=1,2$. }

\subsection{Density of periodic points} \label{subsec:density} 

For the later purpose, we shall remark here one important aspect concerning the density of periodic points.

\begin{prop} \label{dense:periodic}
The periodic points of  the Ostrowski map $S$ are dense in $[0,1)^2$.
\end{prop}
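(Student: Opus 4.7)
The plan is to produce periodic points of $S$ as fixed points of contracting inverse branches $\hb_{a,b}$ and then to invoke the diameter estimate of Proposition~\ref{prop:diameter} to approximate arbitrary points.

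Fix $(x_0,y_0)\in I^2$ with $x_0$ irrational (rational first coordinates being handled by approximation) and $\varepsilon>0$; write $(a_i,b_i)_{i\ge 1}$ for the Ostrowski digit sequence of $(x_0,y_0)$, and $I_n$ for the depth-$n$ fundamental set containing it. By Proposition~\ref{prop:diameter}, $\mathrm{diam}(I_n)\le 2/q_n\to 0$, so I would choose $n$ with $\mathrm{diam}(I_n)<\varepsilon$. Next comes a short cyclic-closure adjustment: I want $(a_n,b_n)\in\Ac_1$, so that by Lemma~\ref{markov} one has $S(I_{a_n,b_n})=I^2$ and the inverse branch will be defined on all of the closed square. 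If instead $a_n=b_n$, the Markov condition of Proposition~\ref{ost}\eqref{ost:2} forces $b_{n+1}=0$, hence $(a_{n+1},0)\in\Ac_1$; passing from $n$ to $n+1$ only shrinks the enclosing fundamental set, preserving the diameter bound.

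With this arrangement in place, the composition $H:=\hb_{a_1,b_1}\circ\cdots\circ\hb_{a_n,b_n}$ is a continuous homography from $[0,1]^2$ into $\overline{I_{a,b}}\subseteq[0,1]^2$, and Brouwer's fixed point theorem yields a fixed point $(x^*,y^*)\in\overline{I_{a,b}}$. To see that $(x^*,y^*)\in I_{a,b}$, one solves the fixed-point equation explicitly using the homographic form of $H$: the first-coordinate equation reads $q_{n-1}(x^*)^2+(q_n-p_{n-1})x^*-p_n=0$, which places $x^*$ strictly inside $(1/(a_1+1),1/a_1)$ (a quadratic irrational), and the second coordinate is then determined by a linear equation that one checks places $y^*$ strictly in its $y$-interval. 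Since $\hb_{a,b}$ is the inverse branch of $S^n$ over $I_{a,b}$, the relation $S^n\circ H=\mathrm{id}$ holds on the domain of $H$; hence $S^n(x^*,y^*)=(x^*,y^*)$, making $(x^*,y^*)$ a periodic point of $S$. Both $(x_0,y_0)$ and $(x^*,y^*)$ lie in $I_n$, so $\|(x^*,y^*)-(x_0,y_0)\|<\varepsilon$ and density follows.

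The main obstacle is the cyclic-closure step: one must adjust the digit word so that $H$ is a genuine self-map of a compact convex domain, which is necessary both for Brouwer to apply and for the resulting fixed point to land in the correct fundamental set. The Markov structure encoded by Lemma~\ref{markov} together with condition~\eqref{ost:2} of Proposition~\ref{ost} is precisely what permits this by a single-letter extension of the digit word, without sacrificing control of the diameter.
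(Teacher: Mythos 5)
Your overall strategy—approximating $(x_0,y_0)$ by the periodic point whose digit word is the prefix $(a_1,b_1)\cdots(a_n,b_n)$ of $(x_0,y_0)$'s expansion, with the diameter estimate of Proposition~\ref{prop:diameter} controlling the error—is the same spirit as the paper's proof, but you construct the periodic point via Brouwer rather than by directly prescribing a periodic expansion and estimating $|x - x^{(n)}|$ and $|y - y^{(n)}|$ as the paper does. The route is legitimate in principle, but there is a genuine gap at the step ``one checks that $y^*$ lies strictly in its $y$-interval.''

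The issue is that ensuring $(a_n,b_n)\in\Ac_1$ handles the cyclic Markov constraint (\ref{ost:2}) but does nothing for constraint (\ref{ost:3}) of Proposition~\ref{ost}, which requires $a_i\neq b_i$ at infinitely many \emph{odd} and infinitely many \emph{even} indices. If the closed-up word $\overline{(a_1,b_1)\cdots(a_n,b_n)}$ has even length and all its mismatches $a_i\neq b_i$ occur at one parity, the Brouwer fixed point lands on the boundary of $[0,1]^2$, not in $[0,1)^2$. Concretely, take $(a_1,b_1)=(1,1)$, $(a_2,b_2)=(1,0)$ (these are the first two digits of, e.g., $(x_0,y_0)=(0.6,0.7)$, and $(a_2,b_2)\in\Ac_1$ so your adjustment does nothing). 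Then $H=\hb_{1,1}\circ\hb_{1,0}$ acts by $H(x,y)=\bigl(\tfrac{1+x}{2+x},\ \tfrac{1+x+y}{2+x}\bigr)$, whose fixed point is $(x^*,y^*)=\bigl(\tfrac{\sqrt5-1}{2},\,1\bigr)$; here $y^*=1\notin[0,1)$, so this is not a periodic point of $S$. The underlying reason is precisely that $\overline{(1,1),(1,0)}$ violates (\ref{ost:3}): $a_i = b_i$ at every odd index. So your assertion that $y^*$ is strictly interior is not automatic, and your ``cyclic-closure adjustment'' does not suffice as stated.

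The gap can be repaired, and this is implicit in the paper's version: since $(x_0,y_0)\in[0,1)^2$, its full digit sequence satisfies (\ref{ost:3}), so for $n$ large enough the prefix already contains mismatches of both parities (or one works with a prefix of odd length, for which a single mismatch suffices). The paper also avoids the issue by appending a fixed suffix $(1,0)$ to the word (which, together with taking $n$ large, guarantees admissibility), then reads off the periodic point directly from its prescribed expansion and bounds $|x-x^{(n)}|$ and $|y-y^{(n)}|$ using continued-fraction and telescoping estimates rather than a fixed-point theorem. You should add the parity-admissibility argument; without it, the crucial claim $(x^*,y^*)\in I_{a,b}\subseteq[0,1)^2$ is unjustified.
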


\begin{proof}
We follow the idea of Broise \cite[Proposition 2.9]{broise} which is conducted for the case of  the Jacobi--Perron algorithm.
For a finite  sequence  of  digits $(a_1,b_1) \cdots   (a_n,b_n)$,  the notation $\overline{ (a_1,b_1) \cdots   (a_n,b_n)}$ 
stands for the infinite periodic  sequence with period  $(a_1,b_1) \cdots   (a_n,b_n)$.

Let $(x,y)$  be a point in $\bigcap _{ k \geq 0}   S^{-k}(I^2)$, i.e.,  $(x,y)$ admits an infinite expansion.   Let $(a_n,b_n)_{ n \geq1}$  stand for its  Ostrowski expansion.
Let us define a sequence  $(x^{(n)}, y^{(n)})_{ n \geq 1} $ of  elements in $I^2$    admitting  a periodic  Ostrowski expansion
 that converges to $(x,y)$. One can first check that for every positive integer $n$,
 the infinite sequence  $\overline{ (a_1,b_1) \cdots   (a_n,b_n) (1,0)}$   satisfies the admissibility conditions  in Proposition \ref{ost}.
Then we can take  $(x^{(n)}, y^{(n)})$ as an element of $I^2$ having   the periodic Ostrowski expansion 
$\overline{ (a_1,b_1) \cdots   (a_n,b_n) (1,0)}$.  
The point  $(x^{(n)},y^{(n)})$  is in the domain $ I_n(a_1,\cdots,a_n)$ (with the notation of  Proposition  \ref{prop:diameter}), and  this domain shrinks exponentially fast to $ (x,y)$ by Proposition \ref{prop:diameter}. We deduce that $(x^{(n)})$   converges to $x$ and  $(y^{(n)})$  converges to $y$.



\end{proof}

\subsection{Transfer operator} \label{sub:transferop}

In this section, we introduce  a first transfer operator associated to the Ostrowski dynamical system.

Let $s, w \in \Cb$ be complex parameters and  let $f:I^2 \ra \Rb^+$ be a function. For $(x,y) \in I^2$ and $\phi \in L^1(I^2)$, consider the transfer operator 
\begin{align*}
\Lc_{s,w} \phi(x,y)=  \sum_{S(x',y')=(x,y)} \frac{\exp(w \rdot f(x',y'))}{|\Jbb_{S}(x',y')|^s} \cdot \phi(x',y')
\end{align*}
where $\Jbb_f(x,y)$ denotes the Jacobian determinant of $f$ at $(x,y)$. 
Notice that for any $(a,b) \in \Ac$, we have  (see also Proposition \ref{prop:diameter})
\[ \Jbb_{\hb_{a,b}}(x,y)= \det \left( \twobytwo{-\frac{1}{(a+x)^2}}{0}{-\frac{b+y}{(a+x)^2}}{\frac{1}{a+x}}   \right)= -\frac{1}{(a+x)^3}  \]
which is non-vanishing and uniform with regard to the skew coordinate. 
Then from \S\ref{prep:dynamic}, the operator can be written in a more explicit way as
\begin{align} \label{op:expression}
\Lc_{s,w} \phi(x,y) &=  \sum_{(a,b) \in \Ac} e^{w \cdot f \circ \hb_{a,b}(x,y)} \cdot |\Jbb_{\hb_{a,b}}(x,y)|^s  \cdot \phi\circ \hb_{a,b}(x,y) \cdot \ib_{S I_{a,b}}(x,y) \\
&= \sum_{(a,b) \in \Ac} \frac{e^{w \cdot f\left(\frac{1}{a+x}, \frac{b+y}{a+x} \right)}}{(a+x)^{3s}} \cdot \phi \left(\frac{1}{a+x}, \frac{b+y}{a+x}\right) \cdot \ib_{S I_{a,b}}(x,y)  .
\end{align}
This series converges when $\Re(s)>1/3$ and $\Re(w)$ is close to $0$,  by assuming  that $f \circ \hb_{a,b}=O(\log a)$ for all $(a,b)\in \Ac$. From now on, we make this assumption on $f$ throughout the entire paper.  According to the terminology from  \cite{bv}, such a function $f$ is called of {\em moderate growth}.

Remark that certain terms may  vanish depending on the admissibility between $\Delta_i$ and $I_{a,b}$. Indeed Lemma \ref{markov} says that if
 $(x,y) \in \Delta_0$, then the characteristic function $\ib_{S I_{a,b}}(x,y)=1$ for all $(a,b) \in \Ac$. However if $(x,y) \in \Delta_1$, there is no inverse image of $(x,y)$ under $S$ in $I_{a,b}$ for $(a,b) \in \Ac_0$ and thus we have $\ib_{S I_{a,b}}(x,y)=0$ for $(a,b) \in \Ac_0$.


Now observe the iterations of $\Lc_{s,w}$. We recall that  $\hb_{a,b}^{(k)}$  stands for  $\hb_{a_k,b_k} \circ \cdots \circ \hb_{a_n,b_n}$  for  $1 \leq k \leq n$.
Then by the chain rule applied to the Jacobian determinant and   by additivity of the exponential term, we have,  for any $n \geq 1$:
\begin{align*} 
 \Lc_{s,w}^n \phi (x,y)&= \sum_{(a,b) \in \Ac^n} e^{\sum_{k=1}^n w \cdot f \circ \hb_{a,b}^{(k)}(x,y)}|\Jbb_{\hb_{a,b}}(x,y)|^s \cdot \phi \circ \hb_{a,b}(x,y) \numberthis \label{op:iterates} \\
& \ \ \ \ \ \ \  \quad \quad  \cdot \prod_{k=1}^{n-1} \ib_{S I_{a_k,b_k}}  (\hb_{a,b}^{(k+1)})(x,y)  \cdot \ib_{S I_{a_n,b_n}}(x,y) . 
\end{align*}

\hide{----

For $n=2$, it is simply written as
\begin{align*}
\Lc_{s,w}^2 \phi(x,y) &= \sum_{(a, b) \in \Ac} e^{w \cdot f \circ \hb_{a,b}(x,y)} |\Jbb_{\hb_{a,b}}(x,y)|^s  \rdot \Lc_{s,w} \phi \left( \frac{1}{a+x}, \frac{b+y}{a+x} \right) \cdot \ib_{S I_{a,b}}(x,y) \\
&= \sum_{(a_2, b_2) \in \Ac} e^{w \cdot f \circ \hb_{a_2,b_2}(x,y)} |\Jbb_{\hb_{a_2,b_2}}(x,y)|^s \sum_{(a_1, b_1) \in \Ac}  e^{w \cdot f \circ \hb_{a_1,b_1} \circ \hb_{a_2,b_2}(x,y)} \\ 
& \ \ \ \ \ \ \ \ \ \cdot \left|\Jbb_{\hb_{a_1,b_1}}\left( \frac{1}{a_2+x}, \frac{b_2+y}{a_2+x} \right)\right|^s
 \phi \left( \frac{1}{a_1+\frac{1}{a_2+x}}, \frac{b_1+\frac{b_2+y}{a_2+x}}{a_1+\frac{1}{a_2+x}} \right) \\ 
& \ \ \ \ \ \ \ \ \ \cdot \ib_{S I_{a_1,b_1}}\left( \frac{1}{a_2+x}, \frac{b_2+y}{a_2+x} \right) \ib_{S I_{a_2,b_2}}(x,y) .
\end{align*}

-------}

 This shows that one has to  look into the action of multiplication by characteristic functions for a suitable choice of the function space on which the operator may act. This will be the main discussion for the next section.

\begin{rem}\label{rem:int}
In the classical case  $(s,w)=(1,0)$, i.e., where $ \Lc_{s,w} $ is the Perron--Frobenius operator,
one recovers the 
relation  $$\int _{I^2} \Lc_{1,0}\phi \ dm= \int_{I^2} \phi \ dm $$ with respect to the Lebesgue measure $m$  on $I^2$.
One has  also  for any $\phi$ in $L_1(I^2)$ and  any $ \psi$ in $L_{\infty}( I^2)$
\[ \int_{I^2}  \phi(x) \psi  \circ S \ dm= \int _{I^2}   (\Lc_{1,0} \phi  )\psi \ dm. \]
\end{rem}

\section{Markov partition  and  transfer operator} \label{mod:ostrowski}

In the case  of a  dynamical system  which is piecewise smooth and not complete  (as in  our case by  Lemma \ref{markov}), the main problem is to find a suitable space of functions on which the transfer operator admits nice spectral properties. We recall  by  (\ref{op:expression}) that     characteristic functions  of the form   $S I_{a,b} $ appear in the expression of the transfer operator $\Lc_{s,w} \phi$
and thus, some well-known function spaces  are not invariant under its action.

We remark that there have been extensive progress on this issue over the years in  the more general context of (an)isotropic Banach spaces on which  transfer operators associated to piecewise expanding or hyperbolic maps admit good spectral properties (see the  book of Baladi \cite{baladi:book2} for the  related  literature).  However, we are here  in a simple case. Indeed,    the  Ostrowski map  admits a Markov partition by  Lemma  \ref{markov}       for which  simple combinatorial arguments due to Mayer \cite{mayer} can be suitably adapted. The idea is,   firstly, to   adapt  the  (countable) Markov partition  provided by the  sets $I_{a,b}$
by considering a (finite)  modified Markov  partition which  controls   the   Markov  admissibility conditions  in Proposition \ref{ost}. Secondly,  the idea is to introduce a generalised transfer operator acting on a certain Banach space of holomorphic functions which are cut by discontinuities at the boundaries of the modified Markov partition. Note that this strategy  was also successfully used by  Broise in  \cite{broise} for the study of  the Jacobi--Perron algorithm (see also  \cite[Example 3]{mayer}).

In this section, we show that the Ostrowski dynamical system admits a finite Markov partition (relative to the   fundamental digit partition  $\{I_{a,b} \}_{(a,b) \in \Ac}$) in the sense of Mayer and thus consider the associated generalised transfer operator. We now give all explicit details for the  corresponding modification of the  Markov partition in \S\ref{mod:mayer} and for 
the  transfer operator in \S\ref{subsec:mto}.

\subsection{Markov partition in the sense of Mayer} \label{mod:mayer}

In this section, we shortly recall the ideas of Mayer   from \cite{mayer} and show that there exists a finite partition  of  $I^2$ satisfying    admissibility 
conditions with  respect to the  digit partition     $\{I_{a,b}\}$.
We recall that the admissible    sequences of   pairs of digits produced by the Ostrowski map $S$  satisfy a simple Markov condition (see Proposition \ref{ost}).


More precisely, Mayer  introduced   in \cite{mayer} a generalised transfer operator associated to a dynamical system $(I^k, T)$, where $I^k \subseteq \Rb^k$ denotes the $k$-dimensional unit cube, for   $T$    being a piecewise expanding with a countable partition. He showed that there is an ad hoc function space on which the operator acts and admits good spectral properties. 
 Mayer considered the following modified notion for an irreducible Markov partition:

\begin{defn}[Mayer] \label{markov:mayer} Let $\{ O_i\}_{i \in \mathcal{I}}$ be a topological   partition by open sets  for $I^k$. Denote by $h_i=T|_{O_i}^{-1}$ the inverse branch.
Consider  now a    topological partition $\{W_\al\}_{\al \in \mathcal{J}}$  by open set   of $I^k$ such that
\begin{enumerate}
\item[(M1)] For any $i \in \mathcal{I}$ and $\al \in \mathcal{J}$, there is either a unique $\beta \in \mathcal{J}$ satisfying   $$h_i(W_\al) \subseteq W_\beta \cap O_i
\mbox{ or }  h_i(W_\al) \subseteq \Rb^k \backslash I^k.$$
\item[(M2)] Given $\al, \beta \in \mathcal{J}$, there exists a finite chain $h_{i_1}, \cdots, h_{i_n}$ and $\gamma_1, \cdots, \gamma_n \in \mathcal{J}$ such that for all $1 \leq r \leq n$, we have $$h_{i_r} \circ \cdots \circ h_{i_n}(W_\al) \subseteq W_{\gamma_r}  \cap O_{i_r}$$ with $\gamma_1=\beta$. 
\hide{If $|\mathcal{J}|=\infty$, then there exists an integer $m>0$ such that for any two allowed chains 
\[ h_{i_1} \circ \cdots \circ h_{i_n}(W_\al) \subseteq W_\beta \ \mbox{and } h_{i_1} \circ \cdots \circ h_{i_n}(W_{\al'}) \subseteq W_{\beta'} , \] 
we have $\beta=\beta'$ if $n \geq m$.}
\end{enumerate}
Then the partition $\{W_\al\}_{\al \in \mathcal{J}}$ is called a Markov partition relative to $\{O_i\}_{i \in \mathcal{I}}$.
\end{defn}

The motivation for introducing such  a   modified partition is to deal with characteristic functions in the expression of  the  transfer operator (\ref{op:iterates})   for    establishing the    existence   of an  absolutely continuous invariant measure for piecewise expanding smooth maps that are not complete. 
In our case, we observe the existence of a finite Markov partition for the Ostrowski dynamical system that  satisfies the admissibility conditions  in Proposition \ref{ost} with respect to the partition $\{I_{a,b}\}$, as stated below.
\begin{prop} \label{prop:Mayer}
The partition $\{ \Delta_0, \Delta_1 \}$ is a Markov partition relative to $\{I_{a,b} \}_{(a,b) \in \Ac}$ in the sense of Mayer. 

\end{prop}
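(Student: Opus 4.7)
The plan is to verify the two conditions (M1) and (M2) of Definition \ref{markov:mayer} directly, using the explicit homographic form
\[
\hb_{a,b}(x,y) = \left( \frac{1}{a+x}, \frac{b+y}{a+x} \right)
\]
together with Lemma \ref{markov}, which already identifies $S(I_{a,b})$ as either $\Delta_0$ (when $(a,b)\in\Ac_0$) or $I^2$ (when $(a,b)\in\Ac_1$). The key numerical identity driving everything is
\[
\frac{b+y}{a+x} - \frac{1}{a+x} = \frac{b+y-1}{a+x},
\]
so the image $(u,v) = \hb_{a,b}(x,y)$ lies in $\Delta_1$ iff $b+y\geq 1$, and in $\Delta_0$ iff $b+y<1$. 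That the image automatically lies in $I_{a,b}$ whenever it lies in $I^2$ is immediate from $\lfloor a+x\rfloor = a$ and $\lfloor b+y\rfloor = b$.

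For (M1), I would fix $(a,b)\in\Ac$ and go through three subcases. If $(a,b)\in\Ac_0$ (so $b=a\geq 1$), then $\hb_{a,b}$ is only defined on $\Delta_0 = S(I_{a,b})$; on $\Delta_0$ one has $b+y\geq 1$, so $\hb_{a,b}(\Delta_0)\subseteq \Delta_1\cap I_{a,b}$ (unique $\beta=1$); on $\Delta_1$ one has $y\geq x$, hence $v=(a+y)/(a+x)\geq 1$, so $\hb_{a,b}(\Delta_1)\subseteq \Rb^2\setminus I^2$, matching the second alternative of (M1). If $(a,b)\in\Ac_1$ with $b\geq 1$, then $b+y\geq 1$ on all of $I^2$, giving $\hb_{a,b}(\Delta_\alpha)\subseteq \Delta_1\cap I_{a,b}$ for both $\alpha\in\{0,1\}$. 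If $(a,b)\in\Ac_1$ with $b=0$, then $v=y/(a+x)<1/(a+x)=u$ because $y<1$, and so $\hb_{a,0}(\Delta_\alpha)\subseteq \Delta_0\cap I_{a,0}$ for both $\alpha$. In each case the target $\beta$ is uniquely determined since $\Delta_0$ and $\Delta_1$ are disjoint.

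For (M2), since $|\mathcal{J}|=2$ there are only four pairs $(\alpha,\beta)$ to reach, and a one-step chain suffices for each. Using the computations above I can exhibit: $\hb_{1,0}(\Delta_0)\subseteq \Delta_0\cap I_{1,0}$ for $(0,0)$; $\hb_{1,1}(\Delta_0)\subseteq \Delta_1\cap I_{1,1}$ for $(0,1)$; $\hb_{1,0}(\Delta_1)\subseteq \Delta_0\cap I_{1,0}$ for $(1,0)$; and $\hb_{2,1}(\Delta_1)\subseteq \Delta_1\cap I_{2,1}$ for $(1,1)$. This yields an explicit $n=1$ chain realising condition (M2) in all four cases.

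There is essentially no conceptual obstacle here: the whole statement reduces to a direct sign check of $b+y-1$ on each piece $\Delta_\alpha$, combined with Lemma \ref{markov}. The only minor care I would take is to point out that boundary configurations (e.g.\ the line $\{y=x\}$ in $\Delta_1$, where $\hb_{a,a}$ sends $y=x$ to $v=1\notin [0,1)$) are already correctly handled by the strict convention $I^2=[0,1)^2$, so no such boundary point is ever mapped inside $I^2$ by a branch that would create ambiguity in~(M1).
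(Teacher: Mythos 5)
Your proposal is correct and takes essentially the same route as the paper: your (M1) verification is the trichotomy ($b=a$, $0<b<a$, $b=0$) of Lemma~\ref{partition:mayer} and Proposition~\ref{transition}, driven by the sign of $b+y-1$, and your explicit length-one chains for (M2) are exactly the edges that make the paper's two-vertex transition graph (Fig.~\ref{fig:2}) strongly connected. Your side remark that $\hb_{a,a}$ sends the boundary $\{y=x\}$ to $v=1\notin[0,1)$ is in fact slightly more careful than the paper's statement $\hb_{a,a}(\Delta_1)\subseteq[0,1]\times\Rb_{>1}$, but it changes nothing of substance since either way the image avoids $I^2=[0,1)^2$.
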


\begin{proof}
We first 
recall  that (see also Fig. \ref{fig:1}) $$\Delta_0=\{(x,y): y<x\}= \bigcup _{(a,b) \in \Ac: b=0}  I_{a,b} \mbox{  and }  \Delta_1=\{(x,y): x < y \}=  \bigcup_{(a,b) :  0<b \leq a }  I_{a,b} $$
 (by considering equality up to sets of zero measure).  Then we deduce from Lemma \ref{markov} by  straightforward calculation the following:
\begin{itemize}
\item
for $ (a,b)  \in \Ac_0 = \{(a,b) : a=b\}$,   $\hb_{a,b}(\Delta_0)=I_{a,b},$ 
$\hb_{a,b}(\Delta_1)\cap  I^2= \emptyset$;
\item
for $(a,b)  \in \Ac_1=\{(a,b) : 0 \leq b < a\},$ $ \hb_{a,b}(\Delta_0) \subseteq  I_{a,b}, $ $ \hb_{a,b}(\Delta_1) \subseteq I_{a,b},$  and 
$\hb_{a,b}((0,1)^2)= I_{a,b}.$
\end{itemize}

This shows that     the partition   $\{ \Delta_0, \Delta_1 \}$ satisfies Condition (M1). 
We now  introduce  some notation for proving   (M2).
For a  pair  of digits $(a,b) \in \Ac$, we  introduce a transition matrix $A^{(a,b)}$ encoding the admissibility between $I_{a,b}$ and $\{\Delta_0, \Delta_1\}$, with the entry  
 $A_{j,i}^{(a,b)}$ detecting  the admissibility from $\Delta_i$ to $\Delta_j$ with respect to $ \hb_{a,b}$; 
hence, for each $(a,b) \in \Ac$ and $i,j \in \{0,1\}$,  we set 
\begin{align*}
 A_{j,i}^{(a,b)}&=1 \ \mbox{if } \hb_{a,b}(\Delta_i) \subseteq \Delta_j \cap I_{a,b} \\
 &=0  \ \mbox{if }\hb_{a,b}(\Delta_i) \subseteq \Rb^2 \backslash \Delta_j. 
\end{align*}
We also set  (which is well-defined by  (M1)) $$\tau_{a,b}(i)=j  \mbox{ if } A_{j,i}^{(a,b)}=1 \mbox{  for some }j  \mbox{  and }\tau_{a,b}(i)=i \mbox{ otherwise}.$$

Let us come to the proof  of (M2) for the coarse partition $I^2$ provided by $\Delta_0 \cup \Delta_1$. 
By Lemma \ref{markov}  we observe    that for  $(a,b) \in \Ac$ and $i,j \in \{0,1\}$, then 
$$A^{(a,a)}=\begin{bmatrix}
0 &0 \\
 1& 0 
 \end{bmatrix}, \quad  A^{(a,0)}=\begin{bmatrix}
1 &1 \\
 0& 0
 \end{bmatrix},  \quad  A^{(a,b)}=\begin{bmatrix}
0 &0 \\
 1& 1 
 \end{bmatrix} \mbox{ when } 0 < b <a.$$

We  start  with a  simple remark. 
Fix $i\in \{0,1\}$. If  there exists $j$ such that  $ A_{j,i}^{(a,b)}=1$, then $j=\tau_{a,b}(i)$ and $ A_{ \tau_{a,b}(i),i}^{(a,b)}=1$.
  If  there  is no such  $j$,   the column  of index $i$  in the matrix   $ A^{(a,b)}$  has  only zero entries, and in particular  $ A_{ \tau_{a,b}(i),i}^{(a,b)}=0.$
Thus, one has   $\hb_{a,b} (\Delta_i ) \subset \Delta_j$ if and only if  $j=\tau_{a,b}(i)$  and $A_{\tau_{a,b}(i),i}^{(a,b)}=1$; in other words, 
 $\hb_{a,b}(x,y) \in \Delta_{\tau_{a,b}(i)} \cap I_{a,b}$ for  $(x,y) \in \Delta_i$  if and only if $A_{ \tau_{a,b}(i),i}^{(a,b)}=1$.

 We then consider the   following   graph: its   vertices are   $\Delta_0$ and $\Delta_1$ and   there is  an edge from state $i$ to $j$ 
$\Delta_i$ to $\Delta_j$ if  $\hb_{a,b} (\Delta_i ) \subset \Delta_j$, i.e.,  $j=\tau_{a,b}(i)$  and $A_{\tau_{a,b}(i),i}^{(a,b)}=1$.  
It is depicted in  Fig.  \ref{fig:2} below.

  Assertion   (M2)  then  comes from the  fact that this graph is   strongly connected.
Indeed, given  $i,j \in \{0,1\}$, there exists a   path from state $i$ to state $j$, i.e., a sequence $(a_k, b_k)_{1 \leq k \leq n} \in \Ac^n$ such that 
$\tau_{a_1,b_1} \circ \cdots \circ \tau_{a_n,b_n}(i)=j$  and 
$$\hb_{a_r,b_r} \circ \cdots \circ \hb_{a_n,b_n} (\Delta_i)  \subset \Delta_{\tau_{a_r,b_r} \circ \cdots \circ \tau_{a_n,b_n}(i)}, \mbox{ for all  } 1  \leq r \leq n,$$
which is equivalent to
\begin{equation} \label{irred}
 A_{ \tau_{a_n,b_n}(i),i}^{(a_n,b_n)} A_{ \tau_{a_{n-1},b_{n-1}} \circ \tau_{a_n,b_n}(i),\tau_{a_n,b_n}(i)}^{(a_{n-1},b_{n-1})}  \cdots  A_{ \tau_{a_1,b_1} \circ \cdots \circ \tau_{a_n,b_n}(i),\tau_{a_2,b_2} \circ  \cdots \circ \tau_{a_n,b_n}(i)}^{(a_1,b_1)} =1.
\end{equation}

\end{proof}

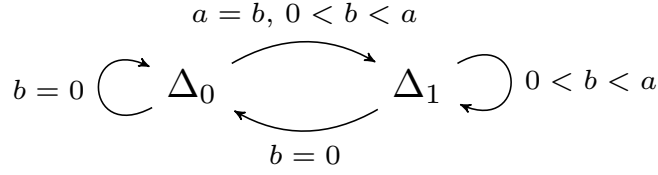
\begin{figure}[h]
    \centering
\begin{tikzpicture}[->,>=stealth',shorten >=1pt,auto,node distance=2cm,
                semithick,scale=1.5,transform shape]            
\tikzset{every loop/.style={min distance=7mm, looseness=5}}

\node   (A)                           {$\Delta_0$};
\node    (B) [right of=A]       {$\Delta_1$};

\path (A) edge [in=155, out=208, loop] node {\scriptsize $b=0$} (A)
          edge [bend left]  node {\scriptsize $a=b,$ $0<b<a$} (B)
      (B) edge [out=30, in=337, loop] node {\scriptsize $0<b<a$} (B)
          edge [bend left]  node {\scriptsize $b=0$} (A);
\end{tikzpicture}
 \caption{Admissibility among the  $\Delta_i$'s with respect to the action of $\hb_{a,b}$.  }
   \label{fig:2}
\end{figure}

\begin{rem}\label{rem:digits}

If  $\hb_{a,b} (\Delta_i ) \subset \Delta_j$ for some $j$, then the  first  pair  of Ostrowski  digit of  $\hb_{a,b}(x,y)$ is $(a,b)$, for   any $(x,y) \in \Delta_i$. 
Arrows  in the graph depicted in Fig. \ref{fig:2} then   indicate   what are the    admissible  pairs of  digits for  elements of $\Delta_0$ and $\Delta_1$   with respect to the  Markov condition of Proposition  \ref{ost}. 
\end{rem}

\subsection{Generalised transfer operator}\label{subsec:mto}

In this section, we define a generalised transfer operator associated to the Ostrowski map using the admissibility from \S\ref{mod:mayer} that  compares to the genuine one given  in (\ref{op:expression}),  which  involves  by (\ref{op:iterates})
$$\prod_{k=1}^{n-1} \ib_{S I_{a_k,b_k}} \circ (\hb_{a_{k+1},b_{k+1}} \circ \cdots \circ \hb_{a_n,b_n} )(x,y) \cdot \ib_{S I_{a_n,b_n}}(x,y).$$    

For each $i, j \in \{0,1\}$ (this index  refers  to  the  atoms of   the finite Markov partition $\{\Delta_i\}_{i \in \{0,1\}}$), set 
\[  \Ac_{i,j}=\{  (a,b) \in \Ac :     \hb_{a,b}({\Delta_i}   ) \subset \Delta_j  \}  .  \]
We also define  $\Ac_{i,j}^n$ as $ \Ac_{i,j}^n=\{  (a,b) \in \Ac^n:    \hb_{a,b}({\Delta_i}   ) \subset \Delta_j  \}$.  See Fig. \ref{fig:2}  where the edges from state $\Delta_i$ to 
state $\Delta_j$
 are labelled by elements of 
$ \Ac_{i,j}$.

For any $i$ and $(x,y) \in \Delta_i$, $\ib_{SI_{a,b}}(x,y)=1$  if and only if there exists $j$ such that  $(a,b) \in \Ac_{i,j}$. The   product of characteristic functions  in (\ref{op:iterates})  is  identified with 1  for $(x,y) \in \Delta_i$  if and only if  each term is equal to $1$,
which  is thus  in turn equivalent to the existence of $j$ such that   $((a_1,b_1), \cdots,(a_n,b_n)) \in  \Ac_{i,j}^n$.

Using this, we introduce a generalised transfer operator. 
For each $i \in \{0,1\}$ and $(x,y) \in \Delta_i$, define $\Lt_{s,w}=(\Lt_{0,(s,w)}, \Lt_{1,(s,w)})$, where
\begin{align*} 
\Lt_{i,(s,w)} \Phi(x,y) &= \sum_{j \in \{0,1\}} \sum_{(a,b) \in \Ac_{i,j}}  e^{w \cdot f \circ \hb_{a,b}(x,y)}\cdot  |\Jbb_{\hb_{a,b}}(x,y)|^s \cdot \Phi_j \circ \hb_{a,b}(x,y)  \numberthis  \label{op:general} 
\end{align*}
where $\Phi=(\Phi_0, \Phi_1)$.
Note that this operator is similar to $\Lc_{s,w}$, but it is  locally defined on each partition element $\Delta_0$ and $\Delta_1$   by means of the admissibility.

The explicit relation between the generalised operator $\Lt_{s,w}$ and $\Lc_{s,w}$ is not difficult to see. Say the operator $\Lt_{s,w}$ acts on $\widetilde{\Bc}$, a suitable function space which contains $L^1(I^2)$. Let $\kappa: \widetilde{\Bc} \ra L^1(I^2)$ be the specialisation given a.e.  by 
\[ (\kappa \Phi) (x,y):= \Phi_i(x,y) \ \mbox{if } (x,y) \in \Delta_i . \]
Then for $(x,y) \in \Delta_i$, we have
\begin{align*} 
\kappa (\Lt_{s,w} \Phi) (x,y) &= \Lt_{i,(s,w)} \Phi(x,y) \\
&= \sum_{j \in \{0,1\}} \sum_{(a,b) \in \Ac_{i,j}}   e^{w \cdot f \circ \hb_{a,b}(x,y)} \cdot  | \Jbb_{\hb_{a,b}}(x,y)|^s  \cdot \Phi_j \circ \hb_{a,b}(x,y) \\
&\overset{(*)}= \sum_{(a,b) \in \Ac} e^{w \cdot f \circ \hb_{a,b}(x,y)} |\Jbb_{\hb_{a,b}}(x,y)|^s  \rdot (\kappa \Phi) \circ \hb_{a,b}(x,y) \cdot \ib_{S I_{a,b}}(x,y) \\ 
&= \Lc_{s,w} (\kappa \Phi)(x,y)  \numberthis \label{kappa:special} .
\end{align*}
Here, we observe that $(*)$ holds since 
for $(x,y) \in \Delta_i$, we have
$\ib_{SI_{a,b}}(x,y)=1$   if and only if $ (a,b) \in A_{i,j}$ for some $j$ (as stressed in the   beginning of this section). This implies in particular that both operators have the same spectral properties.

Mayer \cite{mayer} observed that there is a suitable function space for such piecewise expanding dynamical systems with a Mayer Markov partition on which the generalised transfer operator admits nice spectral properties. A simple but sensible idea is indeed  to cut by discontinuities, that is, to consider the space of  mappings into $\Cb \times \Cb$, indexed by $i \in \{0,1\}$, endowed with the sup-norm over both the domain and index. We will give a detailed exposition to this in the following section.

\bigskip

We finish this section with some miscellaneous remarks.  

\begin{rem}\label{rem:birk}
We see that the iteration of $\Lt_{s,w}$ can be written  for $n \geq 1$ as
\begin{align*}
\Lt_{i,(s,w)}^n \Phi(x,y) &= \sum_{j \in \{0,1\}} \sum_{(a,b) \in \Ac_{i,j}^n}  \exp \left( \sum_{k=1}^n w \rdot f \circ \hb_{a,b}^{(k)} ( x,y )\right)  \\
& \ \ \ \ \ \ \cdot |\Jbb_{\hb_{a,b}}(x,y)|^s \cdot \Phi_j \circ \hb_{a,b}(x,y)  \numberthis \label{iter:op} 
\end{align*}
where  $\hb_{a,b}^{(k)}$ is the partial composition of depth $1 \leq k \leq n$ of the inverse branch introduced in (\ref{eq:hk}). 
The summand is over the all admissible sequences  $(a,b) \in \Ac_{i,j}^n$, with respect to the Markov condition  of Proposition \ref{ost}.

Now, let $S_n f$ stand for  the Birkhoff sum of the  Ostrowski map $S$  for an observable $f$ of moderate growth   defined on $I^2$, i.e., 
$S_nf = \sum_{k =1}^n   f\circ S^k$. Let $(a,b)\in \Ac^n$. We observe that 
\[ S_n f ( \hb_{a,b}(x,y) )=   \sum_{k=1}^n f \circ   S ^k  (  \hb_{a,b}(x,y) )= \sum_{k=1}^n f \circ \hb_{a,b}^{(k)}(x,y) \]
since  for all $k$ with $  1 \leq k \leq n$ 
$$S^k  (  \hb_{a,b}(x,y) )= \hb_{a,b}^{(k)}(x,y).$$
Hence, in the exponential term 
\begin{equation*} \label{identity:ergodicsum}
\exp\left( \sum_{k=1}^n w \rdot f \circ \hb_{a,b}^{(k)} (x,y)\right) = \exp ( w \rdot S_n f (  \hb_{a,b}(x,y)))
\end{equation*} the parameter $w$ will be used for the study of probabilistic limit theorems below in \S\ref{sec:clt}. Further the parameter $s$ attached to the Jacobian determinant will play a role in the study of Hausdorff dimensions in \S\ref{sec:diophantine}. 
\end{rem}

\section{Spectrum of the generalised transfer operator} \label{sec:spectral}

Using the modification from \S\ref{mod:ostrowski}, we introduce and study an ad-hoc function space due to Mayer \cite{mayer} for the generalised transfer operator $\Lt_{s,w}$. We show that   $\Lt_{s,w}$ acts compactly on this function space and hence conclude that the Ostrowski dynamical system admits an absolutely continuous invariant measure which satisfies an  exponential mixing property.

Recall  that for $(a,b) \in \Ac_0$ and $(x,y) \in \Delta_1$,  $\hb_{a,b}(x,y)$ lies in $[0,1]\times \Rb_{>1}$. For such pairs, notice that $\frac{a+y}{a+x}>1$ and  that it  is close to 1 for $a \gg 1$. In the worst case, when $a=1$ with small $x$ and large $y$, we see that $\frac{a+y}{a+x}$ is bounded by 2. From this, we observe the existence of a complex domain on which $\hb_{a,b}$ admits 
an analytic continuation and that is mapped strictly into itself:

\begin{prop} \label{hol:domain}
There is a bounded domain $\Omega$ in $\Cb^2$ with $I^2 \subseteq \Omega$ on which the inverse branches $\hb_{a,b}$, for $(a,b) \in \Ac$,  can be analytically continued and   which map $\overline{\Omega} $ strictly into itself.
\end{prop}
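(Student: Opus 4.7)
My first attempt would be the simplest ansatz: a bi-disk $\Omega = D_1 \times D_2$, with $D_1 \subseteq \Cb$ a standard open disk containing $[0,1]$ that is disjoint from the poles $\{-a : a\ge 1\}$ and is mapped strictly into itself by each $z \mapsto 1/(a+z)$ (a classical construction for the Gauss-map transfer operator, e.g.\ $D_1 = \{|z-1| < 3/2\}$), and $D_2 \subseteq \Cb$ a disk containing an open neighbourhood of $[0,2]$. The latter choice is dictated by the paragraph preceding the statement: the real estimate $(a+y)/(a+x) \le 2$ on $I^2$, with extreme case $\hb_{1,1}(0,1) = (1,2)$. For this ansatz, analytic continuation of each $\hb_{a,b}(x,y) = (1/(a+x),\,(b+y)/(a+x))$ to $\overline\Omega$ is immediate, and strict inclusion $\hb_{a,b}(\overline\Omega) \subsetneq \Omega$ splits into a first-coordinate Gauss-map check and a second-coordinate check: for fixed $x \in \overline{D_1}$ and $D_2 = B(c_2,r_2)$, the M\"obius image disk with centre $(b+c_2)/(a+x)$ and radius $r_2/|a+x|$ should sit strictly inside $D_2$.

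\textbf{Main obstacle.} This ansatz breaks for the branch $(a,b) = (1,1)$: the second-coordinate inclusion reduces to $|1-c_2 x| \le r_2(|1+x|-1)$, which demands $|1+x| > 1$ strictly on $\overline{D_1}$. But $I^2 \subseteq \Omega$ forces an open neighbourhood of $0$ into $D_1$, so $|1+x|$ necessarily takes values $\le 1$ on $\overline{D_1}$; no choice of disks $D_1, D_2$ can rescue the argument. This is the step I expect to spend the most time on.

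\textbf{Resolution.} I would drop the product structure and build $\Omega$ as a carefully shaped complex neighbourhood of the real forward-invariant region
\[
R = \{(x,y) \in \Rb^2 : 0 \le x \le 1,\ 0 \le y \le 1+x\},
\]
which contains $I^2$ and satisfies $\hb_{a,b}(R) \subseteq R$ for every $(a,b) \in \Ac$: if $y \le 1+x$, then $(b+y)/(a+x) \le (a+1+x)/(a+x) = 1 + 1/(a+x) = 1 + x'$, using $b \le a$. The complex thickening $\Omega \supset R$ must be non-uniform to handle (i) the tangential contact of $\hb_{a,a}(R)$ with the upper boundary $\{y = 1+x\}$ at the point $(1/a,\,1+1/a)$, and (ii) the local amplification of $\hb_{1,1}$ near $(0,1)$, whose Jacobian has entry $\partial_x((1+y)/(1+x))|_{(0,1)} = -2$. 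The first-coordinate slice of $\Omega$ should be inflated following Mayer's trick (which handles the indifferent dynamics of $\hb_1$ at $x=0$), while the second-coordinate thickness should be allowed to grow geometrically towards $(1,2)$ so as to dominate the $\times 2$ amplification. Boundedness of $\Omega$ then follows from that of $R$, and holomorphicity of each $\hb_{a,b}$ on $\overline\Omega$ from $\Omega$ being kept away from $\{x = -a : a \ge 1\}$.
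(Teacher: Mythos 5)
Your diagnostic of why the naive bi-disk ansatz $\Omega = D_1 \times D_2$ fails is correct and captures a real subtlety: the branch $\hb_{1,1}$ has amplification factor $|1+x|^{-1}$ in the fiber, and since $\overline{D_1}$ must contain a neighbourhood of $0$ it contains points with $|1+x|\le 1$. The real region $R=\{0\le x\le 1,\ 0\le y\le 1+x\}$ you identify is indeed forward invariant, and the sloped upper boundary is the right geometric idea — the paper's domain is similarly a trapezoid $\{-1/4<x<3/2,\ -\alpha'x-\beta'<y<\alpha x+\beta\}$ with $\beta>\alpha>1$, whose sloped sides allow the fiber range to scale with $a+x$. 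However, your "resolution" stops at a plan and leaves two genuine gaps.

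First, you never actually produce a real open domain that is mapped \emph{strictly} into itself: you correctly note the tangential contacts of $\hb_{a,a}(R)$ with $\partial R$ and say the thickening must be "carefully shaped", but you do not specify it or verify the resulting strict inclusion. The paper sidesteps this entirely by choosing a strictly larger trapezoid up front (with $\alpha=1.1,\beta=1.2,\alpha'=1.03,\beta'=1.05$) and checking strict invariance via elementary inequalities, splitting into $a\ge 2$ and $a=1$.

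Second, and more importantly, the passage from a real strictly-invariant domain to a \emph{complex} one is not addressed at all: "build $\Omega$ as a complex neighbourhood" is not an argument, since there is no general reason a strictly $\hb_{a,b}$-invariant real region has a strictly invariant complex thickening uniformly over the countable family $(a,b)\in\Ac$. The paper's proof hinges on a specific mechanism here (following Broise \cite[Proposition 2.13]{broise}): since each $\hb_{a,b}$ is a projective map induced by the linear map $M_{a,b}$ on $\Rb^3$, one lifts the real trapezoid $D$ to the cone $\tilde D\subset\Rb^3$, complexifies to $\tilde D+i\tilde D\subset\Cb^3$, observes that the (now complex-linear) $M_{a,b}$ still map this complexified cone strictly into itself, and sets $\Omega=\pi(\tilde D+i\tilde D)$ via the homogenisation map $\pi$. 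This projective-cone complexification is the key missing idea in your proposal; without it, you cannot conclude. Your observation about $\partial_x((1+y)/(1+x))|_{(0,1)}=-2$ and the proposed "geometric growth of thickness toward $(1,2)$" is a heuristic, not a construction, and it is not clear it closes up consistently for all branches.
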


\begin{proof}

We   first  consider   the  following  domain in $ {\mathbb R}^2$ which contains $I^2$: $$D:=\{ (x,y):  -1/4 < x  <   3/2, \    -  \alpha' x - \beta'  < y  < \alpha x + \beta\},$$
with   $ \beta > \alpha>1$, $ \beta' > \alpha'>0$ and  $\beta >1$ (We will see below that  a  suitable choice of parameters will be 
  $ \alpha =1.1$,  $\beta= 1.2$, $ \alpha' =1.03$, and  $  \beta'=1.05$).
Let $(x,y) \in D$ and let $(a,b) \in \Ac$. We recall that  $\hb_{a,b}(x,y) = (\frac{1}{a+x}, \frac{b+y}{a+x})$.
We want to prove that  the image of $D$  by  $\hb_{a,b} $ is strictly included into itself.

Consider  the first coordinate  $ \frac{1}{a+x} $ of  $\hb_{a,b}(x,y)$.
One has  $$0 <     \frac{1}{a+x}  <  \frac{1}{1-1/4} = 4/3< 3/2.$$
We consider now the second coordinate  $\frac{b+y}{a+x}$. Since  $ 0 \leq b \leq a$,
one gets  $$  \frac{- \alpha' x -\beta'}{a+x}   <  \frac{b+y}{a+x} < \frac{a + \alpha x +\beta}{a+x} .$$
We consider the upper  inequality which involves  $\beta  +  \frac{\alpha }{a+x}-   \frac{b+y}{a+x} $. One has 

$$\beta  +  \frac{\alpha }{a+x}-   \frac{b+y}{a+x} >
 \beta  +  \frac{\alpha}{a+x} -  \frac{a+\alpha x +\beta}{a+x} .$$
 We distinguish two cases according to the fact that $ a \geq 2$ or not.

 If $a \geq 2$,  one gets    $1- \frac{1}{a+x} > \frac{3}{ 7}$.  
  We then  use 
$$ \beta  +  \frac{\alpha}{a+x} -  \frac{a+\alpha x +\beta}{a+x}=    (\beta- \alpha)  \left( 1-\frac{1}{a+x}\right) + \frac{(\alpha-1) a}{ a+x} 
 >  (\beta- \alpha)  \left( 1-\frac{1}{a+x}\right),$$
 since $ \alpha>1$, to deduce  that 
$$\beta  +  \frac{\alpha }{a+x}-   \frac{b+y}{a+x} > 3/7 (\beta- \alpha) .$$
 If $a=1$,  $\frac{1}{1+x} > 2/5$ and 
 
$$  \beta  +  \frac{\alpha}{1+x} -  \frac{1+\alpha x +\beta}{1+x} =  \frac{(\beta- \alpha) x + \alpha-1}{ (1+x)} > \frac{2}{5} \left(-1/4(\beta-\alpha)  + \alpha-1\right)= 
 \frac{5 \alpha  -\beta-4} {10} $$
which holds for $5 \alpha > 4+\beta$ (which is the case  with the choice we made $\alpha=1.1$ and 
$ \beta= 1.2$).

For the lower inequality,  one has  
\begin{align*}
&  \frac{b+y}{a+x}  + \beta'  + \frac{\alpha' }{a+x} > \frac{- \alpha'  x -\beta'}{a+x} + \beta ' +\frac{\alpha' }{a+x} = \beta'- \alpha' +\frac{a \alpha' +\alpha'- \beta'  }{a+x} \\
& \geq  \beta'- \alpha' +\frac{2\alpha' - \beta'  }{a+x}    >  \beta'- \alpha' ,
  \end{align*}
since  we assume $ 2 \alpha' > \beta'$  (which is the case  with the choice we made 
$ \alpha' =1.03$ and  $  \beta'=1.05$). We thus have found a   domain $D$  in  the  real plane that is mapped  strictly  into inside  by all the  maps
  $\hb_{a,b}$.

For   defining a suitable domain $\Omega \subseteq \Cb^2$, we follow and refer to Broise \cite[Proposition 2.13]{broise}. We first projectivise the domain $D$ and then complexify it. Indeed, consider the  cone 
\[   \tilde{ D}= \{ (\lambda \rdot (x,y), \lambda): (x,y) \in D, \lambda \in \Rb \}     \]
where $\lambda \rdot (x,y)=(\lambda x, \lambda y)$ is the usual product by a scalar. 

The  matrix of the homography   $\hb_{a,b} $  is  the  matrix  $$M_{a,b}:=\begin{bmatrix} 
0 & 0 & 1\\
0 &1 & b\\
1 & 0 & a
\end{bmatrix}, $$ 
which  satisfies, for all $\lambda \in {\mathbb R}$, 
\[ M_{a,b}(\lambda \rdot (x,y), \lambda):= (\lambda x+ \lambda a) \rdot (\hb_{a,b}(x,y),1)  .  \]
By extending the linear  map  $M_{a,b}$ to the   complex  plane,  one deduces that the domain $\tilde{D}+ i \tilde{ D}$ is  mapped  strictly into itself.

Now set $\Omega:= \pi ( \tilde{D}+i \tilde{ D})$, where $\pi$ denotes the homogenisation $(z_1,z_2,z_3) \in \Cb^3 \mapsto (\frac{z_1}{z_3},\frac{z_2}{z_3}) \in \Cb^2$. Since $\hb_{a,b}$ is homographic, it has a natural holomorphic extension  $\hb_{a,b}=  \pi \circ  M_{a,b}$  to $\Omega$.
Hence we have   proved the existence of  a bounded domain $I ^2\subset \Omega \subseteq \Cb^2$ such that $\hb_{a,b}(\overline{\Omega}) $  is strictly included in $\Omega$.
\end{proof}

\begin{rem}
This gives another proof of the convergence of the Ostrowski algorithm  (see Proposition \ref{ost}) via the the use of the Hilbert  metric,    by following, e.g., the  same arguments as  in  \cite[Proposition 2.8]{broise}. This also yields that the Ostrowski map is  expanding.
\end{rem}

Here we remark that as a complex derivative $\Jbb_{\hb_{a,b}}(u,v)=-\frac{1}{(a+u)^3} \neq 0$ for all $(u,v) \in \Omega$, and this is equal to the Jacobian of the corresponding 4-dimensional real function up to a constant. Thus the series $\sum_{(a,b) \in \Ac} |\Jbb_{\hb_{a,b}}(u,v)|^s$ converges uniformly on $\Omega$ when $s$ is  close to  $1$. For $(a,b) \in \Ac^n$, we further have,  by Proposition \ref{prop:diameter}, the existence   of a constant $\eta<1$ so that  
$
|\Jbb_{\hb_{a,b}}| \ll \eta^n ,
$  for all  $(a,b) \in \Ac^n$.

Now consider the space $\Bw(\Omega)$    defined in the introduction as 
$$
\Bw(\Omega)=\{ \Phi=(\Phi_0,\Phi_1): \Omega \ra \Cb : \Phi_i  \ \mbox{is bounded and holomorphic for } i=0,1  \}.
$$
This is a Banach space endowed with the norm 
\[ \|\Phi \|=\sup_{i \in \{0,1\}} \sup_{(u,v) \in \Omega} | \Phi_i(u,v)| . \]
Notice that the operator $\Lt_{s,w}$ acts properly on $\Bw$. For $\Phi \in \Bw(\Omega)$ and  $(s,w)$ close to $(1,0)$, we have
\begin{align*}
| \Lt_{i,(s,w)} \Phi(u,v)| & \leq \sum_{j \in \{0,1\}} \sum_{(a,b) \in \Ac_{i,j}} \left| \frac{e^{w \cdot f \circ \hb_{a,b}(u,v)}}{(a+u)^{3 \sigma}} \right| \rdot |\Phi_j \circ \hb_{a,b}(u,v)| \numberthis \label{bounded}
\end{align*}
where $\sigma=\Re(s)$.  Since $f$ is  assumed to be of  moderate growth   (i.e., $f \circ \hb_{a,b}=O(\log a)$ for all $(a,b) \in \Ac$), the series $\sum_{\hb \in \Ac_{i,j}} \left| \frac{e^{w \cdot f \circ \hb_{a,b}(u,v)}}{(a+u)^{3 \sigma}} \right|$ again converges and 
there  exists some constant ${M_{ \sigma}}$ which gives the boundedness  relation  
\begin{equation}  \label{eq:msigma} \| \Lt_{s,w} \Phi \| \leq {M_{ \sigma}} \| \Phi \|
\end{equation} by taking the supremum on both sides.

Following the main argument of Mayer, we study the spectrum of  the weighted generalised transfer operator $\Lt_{s,w}$ first for $(s,w)=(1,0)$. We observe:

\begin{thm} \label{spectrum0} 
Let $\Lt=\Lt_{1,0}$.
\begin{enumerate}
\item The operator $\Lt$ on $\Bw(\Omega)$ is compact.
\item Further, there is a spectral gap. There is a positive eigenvalue $\lambda_{1,0}$ whose modulus is strictly larger than all other eigenvalues and moreover   the  corresponding eigenfunction $\Phi_{1,0}$ is positive.
\end{enumerate}
\end{thm}

Here the positivity is given with respect to the real cone of functions taking positive values on $\Omega \cap \Rb^2$. We remark that our modified Markov partition $\{ \Delta_i \}_{i \in \{0,1\}}$   is finite, which simplifies a few arguments. The proof is based on  standard techniques from functional analysis: compactness follows from Montel's Theorem and the spectral gap is essentially due to the density of periodic points described in Proposition \ref{dense:periodic} for our case.
For more details,   see \cite{broise,mayer}.

\begin{proof}
First we claim (1). Pick a sequence $(\Phi_n)_{n \geq 1}$ in $\Bw(\Omega)$ with $\| \Phi_n \| \leq 1$. Then it is sufficient to show that $(\Lt \Phi_n)_{n \geq 1}$ has a convergent subsequence. Note that for each $i$ and $(u,v) \in \Omega$, by the above (\ref{bounded}) and (\ref{eq:msigma}), there exists $M_{i} >0$  (by setting  $\Lt_i=\Lt_{i,(1,0)}$)
\[ | \Lt_i \Phi_n (u,v)| \leq {M_{i} }\| \Phi_n \| , \]   hence $ \| \Lt \|$
is bounded by {$M_{i}$}  uniformly on $\Omega$  since $\| \Phi_n \| \leq 1$. This also  implies that $(\Phi_n)_{n \geq 1}$ satisfies the Montel property. Since the index set $\{0,1\}$  is finite, we can extract a subsequence $(\Phi_{n_k})_{k \geq 1}$ that converges uniformly to $\Phi$ on every compact subset of $\Omega$.  This yields the compactness of $\Lt$. 

We now explain (2). We rely on Perron--Frobenius theory by  Krasnoselski\u{\i} \cite{Kras}. Consider $\Bw(\Omega)$ as $ \Bw(\Omega \cap \Rb^2)+ \sqrt{-1} \cdot \Bw(\Omega \cap \Rb^2)$. (In order to avoid any confusion,  we
do not use $i$   but  $\sqrt{-1}$ in this proof). Let $K$ be the positive real cone 
\[ K=  \{ \Phi \in \Bw(\Omega \cap \Rb^2): \Phi_i(x,y) \geq 0 \ \mbox{for all } i\in \{0,1\}, (x,y) \in \Omega \cap \Rb^2 \}.  \]
Then it suffices to show that for any non-trivial $\Phi \in K$, we have $\Lt^n \Phi \in \mathrm{int}(K)$ for some $n \geq 1$. We claim that if we assume the contraposition, then Proposition \ref{dense:periodic} and Proposition \ref{prop:Mayer} yield the conclusion as follows. Suppose  indeed  that for any $n$, there are $i_n$ and $(x_n,y_n)$
 such that  $\Lt_{i_n}^n \Phi(x_n,y_n)=0$. This gives,  by recalling  (\ref{iter:op}):
\begin{align*}
\Lt_{i_n}^n \Phi(x_n,y_n) = & \sum_{j \in \{0,1\}} \sum_{(a,b)\in \Ac_{i_n,j}^n}  |\Jbb_{\hb_{a,b}}(x_n,y_n)|^s 
  \cdot \Phi_j \circ \hb_{a,b}(x_n,y_n) =0.
\end{align*}
This implies that for any $(a,b) \in \Ac_{i_n,j}^n$ we have $\Phi_j ( \hb_{a_1,b_1} \circ \cdots \circ \hb_{a_n,b_n}(x_n,y_n))=0$.

By the density of periodic orbits from Proposition \ref{dense:periodic}  and by  the Earle--Hamilton fixed-point theorem together with Proposition \ref{hol:domain},  we deduce that,
for given  any $(x,y) \in I^2$ and $(x',y') \in I^2$, there exists an admissible  sequence   of  Ostrowski pairs of digits $(a_k,b_k)_k $ such  that 
$(x',y')=    \lim _ k \hb_{(a_1,b_1)} \circ  \cdots \circ \hb_{(a_k,b_k)} (x,y)$.
In particular,    any element  $(x',y')$ of $I^2$  can be reached via  $(x_n,y_n)$, i.e.,  there exists an admissible  sequence   of   Ostrowski pairs of digits $(a_k,b_k)_k $ (which depends possibly  on $n$)  such  that 
$(x',y')=   \lim _ k \hb_{(a_1,b_1)} \circ  \cdots \circ \hb_{(a_k,b_k)} (x_n,y_n)$.  By continuity of  the  function $\Phi$ and 
by  the admissibility condition (M2)  from  Definition \ref{markov:mayer},  together with   Proposition \ref{prop:Mayer}, this 
 implies that   
  $\Phi \equiv 0$.

 Hence for any non-trivial $\Phi \in K$, we see that $\Lt^n \Phi \in \mathrm{int}(K)$, i.e., $c_0 \leq \Lt^n \Phi \leq c_1$ for some real values $c_0, c_1 >0$ depending on $\Phi$. Together with (1), we conclude that $\Lt$ has a simple positive eigenvalue $\lambda_{1,0}$   associated with the positive eigenfunction $\Phi_{1,0}$ and the modulus of all other eigenvalues are strictly smaller than $\lambda_{1,0}$.
\end{proof}

This leads to the following  Kuzmin-type theorem  for the Ostrowski dynamical system, that is, the existence of an absolutely continuous invariant measure $\mu$ on $I^2$ that is  exponentially mixing. 

\begin{thm} \label{mixing}
The Ostrowski transformation admits an absolutely continuous invariant measure $\mu$ with  piecewise  holomorphic density. Moreover, the system $(I^2,S,\mu)$  is exponentially mixing in  $\kappa (\Bw)$.
\end{thm}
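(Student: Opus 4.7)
The plan is to specialize Theorem \ref{spectrum1} (through Remark \ref{rem:sg}) at the parameter $(s,w)=(1,0)$, where $\Lt_{1,0}$ lifts the Perron--Frobenius operator $\Lc_{1,0}$ via the intertwining $\kappa \Lt_{1,0} = \Lc_{1,0}\kappa$ established in (\ref{kappa:special}).

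\textbf{Step 1 (invariant measure).} Let $\Phi_{1,0} \in \Bw(\Omega)$ denote the positive leading eigenfunction from Theorem \ref{spectrum1}, with eigenvalue $\lambda_{1,0}$. Integrating the relation $\Lc_{1,0}(\kappa \Phi_{1,0}) = \lambda_{1,0}\, \kappa \Phi_{1,0}$ against Lebesgue measure $m$ on $I^2$ and applying Remark \ref{rem:int} (invariance of $\int \cdot \, dm$ under $\Lc_{1,0}$) yields
\[
\lambda_{1,0} \int \kappa \Phi_{1,0}\, dm = \int \kappa \Phi_{1,0}\, dm.
\]
Positivity of $\kappa \Phi_{1,0}$ forces $\lambda_{1,0} = 1$. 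Normalize so that $\int \kappa \Phi_{1,0}\, dm = 1$ and set $h := \kappa \Phi_{1,0}$. Then $\Lc_{1,0} h = h$, so $d\mu := h\, dm$ is an $S$-invariant probability measure, absolutely continuous with respect to $m$. Piecewise holomorphy of $h$ on the Mayer partition $\{\Delta_0,\Delta_1\}$ is immediate from the definition of $\Bw(\Omega)$ and the inclusion $\Omega \supset I^2$.

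\textbf{Step 2 (exponential mixing).} By Remark \ref{rem:sg}, $\Lt_{1,0}$ admits the decomposition $\Pt + \Nt$ with rank-one projection $\Pt \Psi = \bigl(\int \kappa\Psi\, dm\bigr)\Phi_{1,0}$, commutation $\Pt\Nt = \Nt\Pt = 0$, and spectral radius $\rho < 1$ for $\Nt$; compactness then gives $\|\Nt^n\|_{\Bw} \leq C\, \rho_0^n$ for any $\rho_0 > \rho$. Given observables $f = \kappa F$ and $g = \kappa G$ in $\kappa(\Bw)$, the slicewise pointwise product $\Psi := F \cdot \Phi_{1,0}$ lies in $\Bw(\Omega)$ (being bounded and holomorphic in $(u,v)$ for each $i \in \{0,1\}$) and satisfies $\kappa\Psi = fh$. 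The duality in Remark \ref{rem:int} and the intertwining (\ref{kappa:special}) give
\[
\int (g\circ S^n)\, f\, d\mu = \int g\, \Lc_{1,0}^n(fh)\, dm = \int g \cdot \kappa\bigl(\Lt_{1,0}^n \Psi\bigr)\, dm.
\]
Inserting the spectral decomposition, the leading contribution from $\Pt\Psi$ evaluates to $\bigl(\int f\, d\mu\bigr)\bigl(\int g\, d\mu\bigr)$ (using $\int \kappa \Psi\, dm = \int f h\, dm = \int f\, d\mu$), while the remainder $\int g \cdot \kappa(\Nt^n\Psi)\, dm$ is bounded in modulus by $\|g\|_{L^1}\, \|\kappa \Nt^n \Psi\|_\infty \leq C'\, \rho_0^n\, \|F\|\,\|G\|$. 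This is exponential decay of correlations in $\kappa(\Bw)$, i.e.\ exponential mixing.

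\textbf{Main obstacle.} Theorem \ref{spectrum1} and Remark \ref{rem:sg} absorb essentially all of the analytic difficulty; the statement of the theorem is then a short consequence. The substantive points are (i) the identification $\lambda_{1,0}=1$ via Remark \ref{rem:int}, which is what promotes the abstract eigenfunction into a genuine $\Lc_{1,0}$-invariant density; (ii) closure of $\Bw(\Omega)$ under pointwise products so that $F\cdot \Phi_{1,0}$ remains in the space where the spectral gap acts; and (iii) the explicit form of the projection $\Pt$ recalled in Remark \ref{rem:sg}, which is forced by $\lambda_{1,0}=1$ and guarantees that the leading term in the correlation integral is precisely the product of the $\mu$-averages of $f$ and $g$.
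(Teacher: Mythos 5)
Your proposal is correct and follows essentially the same route as the paper: specialize the spectral gap of Theorem \ref{spectrum1} (via Remark \ref{rem:sg}) at $(s,w)=(1,0)$, use the intertwining $\kappa\,\Lt_{1,0}=\Lc_{1,0}\,\kappa$ together with Remark \ref{rem:int} to force $\lambda_{1,0}=1$ and obtain the invariant density $\kappa\Phi_{1,0}$, then apply the spectral decomposition to $F\cdot\Phi_{1,0}$ and the duality of Remark \ref{rem:int} to get exponential decay of correlations in $\kappa(\Bw)$. Your write-up is, if anything, slightly more explicit than the paper's about why $F\cdot\Phi_{1,0}$ stays in $\Bw(\Omega)$ and about the $L^1$--$L^\infty$ duality step, but the argument is the same.
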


\begin{proof}
By Theorem \ref{spectrum0}, we have a spectral gap for $\Lt_{1,0}$ with a simple dominant eigenvalue $\lambda_{1,0}$ and corresponding positive eigenfunction $\Phi_{1,0}$. 
 Further, all other eigenvalues are strictly contained in an open disk of radius $\lambda_{1,0}$. Due to the specialisation (\ref{kappa:special}),  observe that $\kappa \Phi_{1,0}$ is an eigenfunction of $\Lc_{1,0}$ on $L^1(I^2)$ with the same eigenvalue $\lambda_{1,0}$. 
In particular, one  has  $\lambda_{1,0}=1$ by  Remark \ref{rem:int}. Thus there exists $\varphi_* \in \kappa (\Bw)$ satisfying $\Lc_{1,0} \varphi_* = \varphi_*$ (with $\varphi_*= \kappa \Phi_{1,0}$) and we get an invariant measure $\mu=\varphi_* m$ for the Ostrowski dynamical system, where $m$ stands for the Lebesgue measure on  $I^2$ (its explicit   expression, due to Ito \cite{Ito:86},  is given in Remark \ref{ito}).

We next prove the mixing property. Again by the gap in the spectrum of $\Lt_{1,0}$, we have that for all $\psi$ with $\psi=\kappa \Psi$ for some $\Psi \in \Bw(\Omega)$ 
\begin{align*}
\Lc_{1,0}^n \psi &= \Lc_{1,0}^n (\kappa \Psi)  =
 \kappa (\Lt_{1,0}^n \Psi) \\
&= \kappa ( \lambda_{1,0}^n \Pt_{1,0} \Psi + \Nt_{1,0}^n \Psi) \\
&= \lambda_{1,0}^n  \left(\int_{I^2} \psi \ dm \right) \varphi_*  +  O(\rho^n \| \Psi \| )  \numberthis \label{spectgap} 
\end{align*}
as $n$ goes to infinity. Here, we use the spectral decomposition $\Lt_{1,0}=\lambda_{1,0} \Pt_{1,0} +\Nt_{1,0}$. 

Finally consider the correlation function: for $\phi, \psi \in \kappa (\Bw)$ with $\psi=\kappa \Psi$ and $\phi=\kappa \Phi$, by (\ref{spectgap}), we have 
by noticing that 
$ \lambda_{1,0}=1$
\begin{align*}
\int_{I^2}  (\phi \circ S^n) \rdot \psi  \ \varphi_* d m &= \int_{I^2} \phi \cdot  \Lc^n_{1,0} (\psi \varphi_*) \  dm  \\
&= \int_{I^2}  \phi \ \varphi_* d m  \int_{I^2} \psi \ \varphi_* d m+ O(\rho^n \| \Phi \| \| \Psi \|), \numberthis \label{exp:mixing} 
\end{align*}
 as $n$ goes to infinity. This shows that the invariant measure $\mu$ for the Ostrowski system is  exponentially  mixing.
\end{proof}

\begin{rem} \label{ito}
We refer to Ito \cite{Ito:86}. Ito earlier showed that there is an invariant measure for the Ostrowski map and the  density  is explicitly given by 
$$ \varphi_*(x,y) =\left\{ 
\begin{array}{ll}
\frac{1}{2\log 2}  \frac{x+3}{(1+x)^2} \quad   \mbox{ on }  \Delta_0 \\
\frac{1}{2\log 2}  \frac{x+2}{(1+x)^2}  \quad \mbox{ on }  \Delta_1 .
\end{array}
\right.$$
One has $ \varphi_*=\kappa \Phi_{1,0}$. 
The proof is different from ours   in the sense that it is based on an explicit  realisation of the    natural  extension map. 

\end{rem}

\begin{rem}\label{rem:sg}

Write $s=\sigma+it$ and $w=\nu+ i \tau$ (with $i=\sqrt{-1}$). 
We remark that the bounded operator $\Lt_{\sigma,\nu}$ also satisfies the proof of Theorem \ref{spectrum0} and $\Lt_{s,w}$ depends analytically on $(s,w)$ with $(\sigma, \nu)$ near $(1, 0)$, i.e., there is a complex neighborhood $U$ of $(1,0)$ on which $\lambda_{s,w}$ and $\Phi_{s,w}$ depend analytically on $U$. Then by analytic perturbation theory (see Kato \cite{kato}), the spectral gap  property extends to $\Lt_{s,w}$ for all $(s,w) \in U$. 

We thus get, for $(s,w)$ close to $(1,0)$,  the spectral decomposition 
\begin{equation} \label{sp:dec}
\Lt_{s,w}=  \lambda_{s,w} \Pt_{s,w}+ \Nt_{s,w}, 
\end{equation}
where $\Pt_{s,w}$ is the projection onto the  $\lambda_{s,w}$-eigenspace 
and $\Nt_{s,w}$ corresponds to the remainder part of spectrum, i.e., this is a bounded operator with spectral radius $\rho$, where $\rho$ is the modulus of the subdominant eigenvalue. Hence, we summarize the perturbation of Theorem \ref{spectrum0} as follows.
 \end{rem}

We recall that in next statement  the  function $f \in \kappa ( \Bw(\Omega))$   which defines $\Lt_{s,w}$ is assumed to  be of moderate growth.
\begin{thm} \label{spectrum1} 
Let  $(\sigma, \nu)$ be  close to $(1,0)$.
\begin{enumerate}
\item The operator $\Lt_{s,w}$ on $\Bw(\Omega)$ is compact.
\item Further, there is a spectral gap. There is a positive eigenvalue $\lambda_{s,w}$ whose modulus is strictly larger than all other eigenvalues and moreover   the  corresponding eigenfunction $\Phi_{s,w}$ is positive.
\end{enumerate}
\end{thm}

\section{Central limit theorem for Diophantine parameters} \label{sec:clt}

Let $(I^2,S, \mu)$ be the Ostrowski dynamical system, where $\mu$ is the  invariant measure given as in Theorem \ref{mixing} and Remark \ref{ito}. We recall that for an observable $f$, the Birkhoff sum $S_nf$  is defined as
\[ S_n f= \sum_{k=0}^{n-1} f \circ S^k . \]
Assuming that $f$    has zero  integral  with respect to $\mu$, we say that the central limit theorem holds if the normalised sum $\frac{1}{\sqrt{n}} S_n f$  converges in law
as a random variable  to a normal distribution.

In this section, we establish  the central limit theorem  for  Birkhoff sums      for $f$ of moderate growth and  we  thus   observe  a  Gaussian behavior  for some Diophantine parameters. Note that this is a quite straightforward application of  Theorem \ref{spectrum1} with the  mixing property from Proposition \ref{mixing}.  More precisely, we  recall  by (\ref{exp:mixing}) that for $\phi, \psi$ with
$\phi=\kappa \Phi$   and $\psi=\kappa \Psi$ for some $\Phi, \Psi \in \Bw(\Omega)$, we have
\[ \int_{I^2} (\phi \circ S^n) \cdot \psi d\mu = \int_{I^2} \phi d\mu \int_{I^2} \psi d\mu+ O(\rho^n  \| \Phi \|\| \Psi \|) . \]
This not only shows  the mixing property in  $\kappa(\Bw (\Omega))$  with respect to the invariant measure $\mu$ but also  the exponential decay of correlation with  rate
$0<\rho<1$.  Note that we state  the central limit  property with the invariant  probability  mesure $\mu$ below but it also holds with the Lebesgue measure since  both measures  are equivalent. 


\begin{thm} \label{clt}   
Let $f \in \kappa ( \Bw(\Omega))$  be a  non-negative real-valued function of moderate growth 
 that  has zero   integral $\int_{I^2} f d\mu$,   and which is not  of the form   $g-g \circ S$ for some $g$ of  $\kappa ( \Bw(\Omega))$. 
 
 Then there exists $\sigma>0$ such that $S_nf/ \sqrt{n}$ converges in law to the normal distribution, i.e.,
\[\lim_n \mu \left\{ (x,y) \in I :  \frac{1}{\sqrt{n}} S_n f(x,y) \leq z   \right\} \longrightarrow \frac{1}{\sigma \sqrt{2 \pi}} \int_{-\infty}^z e^{-t^2/ 2 \sigma^2} dt  \]
for all $z \in \Rb$ as $n$ goes to infinity.
\end{thm}

\begin{proof}
We follow the now classical approach  developed for instance in Broise \cite{Broise:96} or Sarig  \cite{SarigNotes} inspired by Nagaev's theory.

Consider $ \sum_{n  \geq 1 } \int_{I^2} (f \circ S^n) \cdot f d \mu$. Since  each integral in the summation   is a correlation function, the   exponential decay of (\ref{exp:mixing}) with  mixing rate $\rho<1$  implies that  the series  converges. Further we observe that 
\begin{align*}
\int_{I^2} (S_n f)^2 d \mu&=   \int_{I^2}\sum_{k_1, k_2=0}^{n-1} (f \circ S^{k_1}) \cdot (f \circ S^{k_2}) d\mu \\
&=n  \int_{I^2} f^2 d\mu+ 2 \sum_{k=1}^{n-1} (n-k) \int_{I^2} (f \circ S^k) \rdot f d\mu   .
\end{align*}
One then checks that  $\sigma^2=\lim_{n \ra \infty} \frac{1}{n} \int_{I^2} (S_n f)^2 d\mu$  exists,    it
can be written as $\sigma^2=  \int_{I^2} f^2 d\mu+ 2 \sum_{k=1} ^{\infty}  \int_{I^2} (f \circ S^k) \rdot f d\mu    
$  (Green-Kubo formula),  and  that it
 is non-vanishing under the assumptions on $f$.

Next  we claim the normal distribution. Regarding $S_n f$ as a random variable, the idea is to express the moment generating function in terms of the transfer operator $\Lc_{1,w}$ with $w=it$ purely imaginary and observe the relation between derivatives of the dominant eigenvalue and $\sigma$. 
Observe that for $t \in \Rb$
$$
\Eb[e^{\frac{it}{\sqrt{n}} S_n f}] =  \int_{I^2} e^{\frac{it}{\sqrt{n}} S_n f} d\mu 
= \int_{I^2}  e^{\frac{it}{\sqrt{n}} (f \circ S^{n-1})} \cdots e^{\frac{it}{\sqrt{n}} f}   d\mu.$$
We recall that  $m$ stands for the Lebesgue measure on $I^2$ and that $\mu = \varphi_* m$.
With the  changes of variables $(x,y) \mapsto \hb_{a,b}(x,y)$, we have
\[ \int_{I^2}   e^{\frac{it}{\sqrt{n}} f} \varphi_* dm= \sum_{(a,b) \in \Ac} \int_{\hb_{a,b}(I^2)}  e^{ \frac{it}{\sqrt{n}} \cdot f \circ \hb_{a,b}}   |\Jbb_{\hb_{a,b}}(x,y)| \cdot  \varphi_* \circ \hb_{a,b} \, dm
=\int_{I ^2} \Lc_{1,\frac{it}{\sqrt{n}}}  ( \varphi_*)d m  , \]
and 
\begin{equation} \label{express:mgf}
\Eb[e^{\frac{it}{\sqrt{n}} S_n f}] = \int_{I^2} \Lc_{1,\frac{it}{\sqrt{n}}}^n ( \varphi_*)\, d m = \int_{I^2}\kappa (\Lt_{1,\frac{it}{\sqrt{n}}}^n ( \Phi_{1,0})) \,dm.
\end{equation}
Together with Theorem \ref{mixing}, finally we claim  the convergence  to $e^{-\frac{1}{2} \sigma^2 t^2}$. We recall \eqref{sp:dec} indeed that for a fixed $t$ and $n$ large enough, we have the spectral decomposition $\Lt_{1, \frac{it}{\sqrt n}}=  \lambda_{1, \frac{it}{\sqrt n}} \Pt_{1, \frac{it}{\sqrt n}}+ \Nt_{1, \frac{it}{\sqrt n}}$, which enables us to write the expression (\ref{express:mgf}) as
\begin{align*}
\Eb[e^{\frac{it}{\sqrt{n}} S_n f}] &=  
 \int_{I^2} \kappa (\Lt_{1,\frac{it}{\sqrt{n}}}^n (  \Phi_{1,0}))  \,dm \\
&= \int_{I ^2}  \lambda_{1,\frac{it}{\sqrt{n}}}^n \kappa( \Pt_{1,\frac{it}{\sqrt{n}}} ( \Phi_{1,0}))+ \kappa(\Nt_{1,\frac{it}{\sqrt{n}}}^n(  \Phi_{1,0})) \,dm \\
&= \int_{I^2}\left( 1+ \lambda_{1,0}^{(1)} \frac{t}{\sqrt{n}} + \frac{\lambda_{1,0}^{(2)}}{2} \rdot \frac{t^2}{n} + o\left(\frac{t^2}{n} \right) \right)^n \kappa( \Pt_{1,\frac{it}{\sqrt{n}}} ( \Phi_{1,0}) )+ \kappa(\Nt_{1,\frac{it}{\sqrt{n}}}^n(  \Phi_{1,0}))
\end{align*}
where $\lambda_{1,0}^{(m)}=\frac{\partial^m}{\partial w^m} \Big\vert_{w=0}\lambda_{1,w}$ for short. 

We now use the fact that we have the   following formulas for the  derivatives of  the  eigenvalue $\lambda_{1,w}$:
\begin{align*}
\lambda_{1,0}^{(1)}= \frac{\partial}{\partial w} \Big\vert_{w=0}\lambda_{1,w} &= \int_{I^2} f d\mu=0  \\
\lambda_{1,0}^{(2)}= \frac{\partial^2}{\partial w^2} \Big\vert_{w=0} \lambda_{1,w} &= -\sigma ^2.
\end{align*}
This  comes from the spectral gap and decay of correlations.

Then, we have the asymptotic expansion of $\lambda_{1, \frac{it}{\sqrt{n}}}$ when $n$ is large enough to conclude that $\frac{it}{\sqrt n}$ is close to $0$: \[ \lambda_{1, \frac{it}{\sqrt{n}}}=1-\frac{1}{2} \sigma^2 \left( \frac{t}{\sqrt{n}} \right)^2+ O\left( \frac{t}{\sqrt{n}} \right)^3 . \]
We then  deduce from   the  spectral decomposition   together with perturbation theory that 
$ \Eb[e^{\frac{it}{\sqrt{n}} S_n f}] $
 converges to $e^{-\frac{1}{2} \sigma^2 t^2}$.
\end{proof}

In view of Theorem \ref{clt} and Remark \ref{rem:birk}, specialising the observable $f$ allows us to define  relevant quantities related to  inhomogeneous approximation,  and we have  a Gaussian distribution for such Diophantine parameters. 
Indeed, the  membership  functions  satisfy the assumptions of Theorem  \ref{clt}  (see e.g. \cite[Section 7]{Broise:96}).
We thus can consider  $\ib_{\leq N}(\lfloor \frac{1}{x} \rfloor) \ib_{ \lfloor \frac{1}{x} \rfloor -1}(\lfloor \frac{y}{x} \rfloor))$  for the set 
$E_N$  from (\ref{set:hausdorff}), or  else  $ \ib_{ k}(\lfloor \frac{y}{x} \rfloor))$, which yields to consider   $  \#\{ 1 \leq k \leq n :  b_k =b  \} $ (considered in \cite[Proposition 1]{ItoNakada}). 

 We focus  here on the quantity  $ y_n$, where $(x_n,y_n)= S^n(x,y)$ for all $n$.
We recall  from  (\ref{eq:Mn})  and  \S\ref{arith:iden} the  quantity    $M_n=\sum_{i=1}^n b_i q_{i-1} (-1)^{i-1}$.   By (\ref{eq:ybis}), one has 
 \[ \| y- M_n x\| =  \sum_{i=n+1}^\infty b_i |\theta_{i-1}|  = y_n |\theta_{n-1}|.\]
Hence the parameter $ y_n$ has a  Diophantine  expression  as $y_n=\frac{1}{|\theta_{n-1}|} \| M_nx- y\|$.
This dynamical parameter allows the 
definition  of  the set of badly approximable numbers \emph{with respect to the Ostrowski expansion} 
for $x \in [0,1) \backslash \Qb$ and $\varepsilon>0$, as 
\[ B_S(x, \varepsilon)= \left\{ y \in [0,1) : \liminf_{n \ra \infty}  \frac{1}{|\theta_{n-1}|}  \rdot  \| M_n x-y \|  \geq \varepsilon   \right\} . \]

Thus we have from Theorem \ref{clt}   a normal law for the quantity   $|\theta_{n-1}|^{-1}   \| M_n x-y \| $.
By Ito--Nakada \cite[Proposition 3]{ItoNakada}, one has 
for $z $ with $0 \leq z \leq 1$
\begin{align*}
&\lim_{N \rightarrow \infty} \frac{ \mbox{Card}  \{ n : 1 \leq n \leq N,  \  |\theta_{n-1}|^{-1}    \| M_n x-y \|     \leq z \}}  {N} \\
&\ \ \ \ \ \ =\frac{1}{2\log 2}  \left ((2+z)  \log 2 - (2-z) \log(2-z)z- z\log (1+z)  \right).
\end{align*}
This result is obtained by considering the Birkhoff sum associated to the membership  function   associated with the subset of $I^2$
defined by $0 \leq y \leq z$ by noticing that   $ \frac{1}{2\log 2}  ( (2+z)  \log 2 - (2-z) \log(2-z)z- z\log (1+z)  )= \int _{0 \leq y \leq z \leq 1} \varphi_* dm$ (see \cite{ItoNakada}).

We thus   deduce from Theorem \ref{clt}   the following  central limit  property   for the quantity   $y_n= |\theta_{n-1}|^{-1}     \| M_n x-y \| $:

\begin{align*}
& \mu \{ (x,y) \in I^2 :  \frac{1}{ \sigma \sqrt N}   \mbox{Card} \{ n :  1 \leq n \leq N, \  \frac{   \| M_n x-y \| }{ |\theta_{n-1}|} \leq z   \\ 
& \qquad  - N  \frac{1}{2\log 2}  ( (2+z)  \log 2 - (2-z) \log(2-z)z- z\log (1+z)  )   \}   \} \\
& \longrightarrow \frac{1}{\sigma \sqrt{2 \pi}} \int_{-\infty}^z e^{-t^2/ 2 \sigma^2} dt  .
\end{align*}


\section{Hausdorff dimension estimates} \label{sec:diophantine}
For classical continued fractions, the study  of  fractal sets    provided by     bounded digits arises   in a natural  way from  questions in Diophantine approximation. Let $A $ be a finite subset of positive integers  and  consider 
\[ \{ x \in [0,1): x=[0;a_1,a_2, \cdots ], \mbox{ with }    a_i \in A , \mbox{ for all }i \geq 1 \}.\] There have been powerful results concerning the implicit characterisation of the Hausdorff dimension of  this set  in terms of  (constrained)  transfer operators associated to the Gauss map  whose summand is constrained to $A$.  We refer to Hensley \cite{hens1}, Jenkinson--Gonzalez--Urba\'nski \cite{jenkinson},  Jenkinson--Pollicott \cite{jenkinson:pollicott}, or Das--Fishman--Simmons--Urba\'nski \cite{DFSU}. Roughly speaking, if the transfer  operator (having  the Jacobian as a potential)  has a spectral gap acting on a suitable function space and if  there is  a unique real  number $s$ for which the largest eigenvalue  is equal to 1, then $s$ is equal to the Hausdorff dimension of  this set. This kind of  precise description was first established by  Bowen \cite{bowen} in connection with the pressure function and by  Ruelle \cite{ruelle} via spectral invariants of transfer operators for conformal maps.

In this section, we obtain an analogous but weaker  statement for  the following  bounded-digit set for 
Ostrowski expansions, where the associated dynamical system is non-conformal. 
We  focus here on the following set for a fixed $N \geq 2$: 
\[ E_N=\{ (x,y) \in [0,1)^2 :  1 \leq a_i \leq N \ \mbox{and }   b_i= a_i-1, \ \mbox{for all } i \geq 1 \}  \]
where $x=[0;a_1, a_2,\cdots]$ and $y$ is  determined by its  Ostrowski expansion with digits $(b_i)_i$ in base $x$
(see Fig. \ref{fig:3bis} for an illustration).
We are motivated on the one hand  by  Diophantine considerations concerning inhomogeneous  badly approximable numbers. On the other hand,  we  strongly use the  fact that we have an explicit description  of  the fundamental sets  $I_{a,b}$  from Proposition \ref{prop:diameter} in relation to the bounded distortion property of the Jacobian determinant, that we recall below.   We rely here on the fact that the map $S$ is a  skew-product of the Gauss map.

Notice that the set $E_N$ is simply the limit set 
\begin{equation} \label{EN:limitset}
E_N=\cap_{n \geq 1} E_N^{(n)} \mbox{ with } E_N^{(n)} :=  \cup_{(a,b) \in \Ac_N^n} \hb_{a,b}(I^2) 
\end{equation}
where $\Ac_N=\{ (a,b)  \in \Ac: a \leq N  , \ b=a-1\}$ is the set of indexes associated to $E_N$. 


\begin{figure}[h] 
   \centering
   \includegraphics[width=2.5 in]{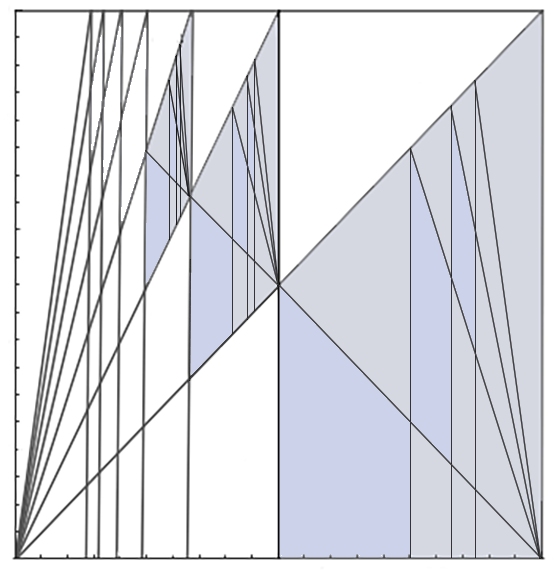} 
   \caption{The sets $E_3^{(1)}$ and $E_3^{(2)}$, depicted in grey and blue, respectively.}
   \label{fig:3bis}
\end{figure}

\begin{lem}[Bounded distortion] \label{distortion}
There exists a uniform constant $L>0$ so that for any  $(a,b) \in \Ac^n$ with $n \geq 1$ and   for any $(x_1,y_1)$ and $(x_2,y_2)$ in $I^2$, we have 
\[ \frac{1}{L} \leq  \left| \frac{ \Jbb_{\hb_{a,b}}(x_1,y_1)}{\Jbb_{\hb_{a,b}}(x_2,y_2)}    \right| \leq L , \]
where $ | \Jbb_{\hb_{a,b}}(x,y)|$ stands for  the Jacobian (determinant)   $ | \Jbb_{\hb_{a,b}}(x,y)|$ of  the inverse branch $\hb_{a,b}$.
\end{lem}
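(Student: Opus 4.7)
The plan is to invoke the explicit formula for the Jacobian determinant $|\Jbb_{\hb_{a,b}}|$ established in Proposition \ref{prop:diameter}, namely
\[
|\Jbb_{\hb_{a,b}}(x,y)| \;=\; \left(\frac{1}{q_n + x\, q_{n-1}}\right)^3,
\]
where $q_{n-1}, q_n$ are the denominators of the convergents determined by $a=(a_1,\dots,a_n)$. The crucial observation, which is special to the Ostrowski skew-product and would fail in the Jacobi--Perron setting, is that the right-hand side does not depend on the $y$-coordinate. So the dependence on $(y_1,y_2)$ in the ratio disappears completely.

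Given this, the proof reduces to bounding the one-variable ratio
\[
\left|\frac{\Jbb_{\hb_{a,b}}(x_1,y_1)}{\Jbb_{\hb_{a,b}}(x_2,y_2)}\right| \;=\; \left(\frac{q_n + x_2\, q_{n-1}}{q_n + x_1\, q_{n-1}}\right)^{3}.
\]
Since $x_1,x_2 \in [0,1)$ and $q_{n-1} \leq q_n$ (a standard property of continued fraction denominators), both numerator and denominator lie in the interval $[q_n,\, q_n+q_{n-1}] \subseteq [q_n,\, 2q_n]$. Hence the ratio inside the cube is bounded above by $2$ and below by $1/2$, uniformly in $n$ and in $(a,b)\in\Ac^n$. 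Taking the third power yields the bound with $L=8$ (any uniform constant $L \geq 8$ works).

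The proof is therefore essentially a one-line consequence of the explicit Jacobian formula from Proposition \ref{prop:diameter} combined with the inequality $q_{n-1} \leq q_n$. There is no real obstacle; the only point worth stressing in the writeup is that the independence of the Jacobian from the $y$-variable is what makes the distortion estimate uniform in $(a,b)$ and in $n$, which is precisely what the later Hausdorff-dimension argument in Section \ref{sec:diophantine} needs when comparing diameters and measures of fundamental sets $I_{a,b}$.
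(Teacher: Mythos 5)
Your proof is correct and follows exactly the paper's own argument: both invoke the explicit Jacobian formula $|\Jbb_{\hb_{a,b}}(x,y)| = (q_n + xq_{n-1})^{-3}$ from Proposition \ref{prop:diameter}, note the independence from the skew coordinate $y$, and conclude with the constant $L=8$. You merely spell out the elementary step (using $q_{n-1}\leq q_n$ and $x\in[0,1)$) that the paper leaves implicit.
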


\begin{proof} According to  the notation of    Proposition \ref{prop:diameter}, one has 

\[ | \Jbb_{\hb_{a,b}}(x,y)| = \frac{1}{(q_n + xq_{n-1})^3 }\]   which does not depend on  the   skew coordinate $y$. 
We  deduce  the result  directly   from this expression,  by taking  e.g. $L=8$. \end{proof}


Note that there is  no Markov condition to consider here ($b$ is never equal to $a$).
In order to study the Hausdorff dimension of $E_N$, one has to  take suitable open covers. As classically done in the continued fraction setting, they will   be based on fundamental sets. Hence, for any $(a,b) \in \Ac_N^n$,  we consider the  fundamental   depth $n$ cylinder 
 $I_{a,b}$,    i.e.,   $\hb_{a_1,b_1} \circ \cdots \circ \hb_{a_n,b_n}(I^2)$,   which   is simply  a  trapezium 
(we use here the fact that  the digits $b_i$ are distinct  from $a_i$ for all  $i$). The fact that we have  explicit estimates
(by Proposition \ref{prop:diameter}) for  describing the  fundamental 
sets of depth $n$ is crucial.

Together with the  spectral properties from \S\ref{sec:spectral},  we aim to  get    Bowen--Ruelle type  estimates for the bounded-type set   $E_N$. Here, we consider $w=0$, so  we write $\Lc_{N,s}:=\Lc_{N,s,0}$, for short, for  the constrained operator  deduced   by the specialisation $\kappa$ from  the operator $\Lt_{N,s}=(\Lt_{0,(N,s)},\Lt_{1,(N,s)})$, whose summand is restricted to $\Ac_N$, i.e., for $i \in \{0,1\}$
$$
\Lt_{i,(N,s)} \Phi(x,y) = \sum_{j \in \{0,1\}} \sum_{(a,b) \in A_{N,i,j}} \frac{1}{(a+x)^{3s}} \cdot \Phi_j \left(\frac{1}{a+x}, \frac{b+y}{a+x}\right) 
$$
where  \[  \Ac_{N,i,j}=\{ (a,b) \in \Ac_N:  \hb_{a,b} ({\Delta_i})  \subset \Delta_j    \}  . \]
The constrained operator $\Lc_{N,s}$ has a spectral gap with the dominant eigenvalue $\lambda_{N,s}$ as the proof goes the same as Theorem \ref{spectrum1}. 

Recall the fact that  $$E_N= \cap_{n \geq 1} \cup_{(a,b) \in \Ac_N^n} I_{a,b}$$ from \eqref{EN:limitset}  and consider  open neighborhoods  containing
fundamental sets  $I_{a,b}$ 
which  form an open cover for $E_N$.  The approach is similar  to the  one classically 
performed for regular continued fractions, such as developed in Jarnik \cite{Jarnik}, Good \cite{Good}, or Fan--Liao--Wang--Wu \cite{FLWW:09}, for instance. 
We  now  obtain  the following upper bound.
\begin{prop} \label{up:hausdorff}
We have $\dim_H E_N \leq \min \{ s_2,  3s_1 \}, $ for  $s_1$   satisfying  $\lambda_{N,s_1}=1$ and $s_2$   satisfying    $(N+1)^{2 s_2} \lambda_{N,s_2}=1$.
\end{prop}

\begin{proof}
We fix $n$ and  some point $(x,y) \in E_N$.
First note that $$(\Lc^n_{N,s} \mathbf{1})(x,y)= \sum_{(a,b)\in \Ac_N^n}  |\Jbb_{a,b} (x,y)|^s $$ by taking the constant function $\mathbf{1}$.
By the spectral gap property (see Remark \ref{rem:sg}), one has $$|\Lc^n_{N,s} \mathbf{1} (x,y)-\lambda_{N,s}^n | \ll \lambda_{N,s}^n \rho^n$$ for some $\rho<1$. Also by Proposition \ref{prop:diameter}  (and using the notation of its statement),  one has 
$ \mathrm{diam}(I_{a,b})  \leq  \frac{3}{q_n +q_{n-1}}$ and by Lemma \ref{distortion}, we have 
\[  |\Jbb_{\hb_{a,b}}(x,y)| \geq L^{-1}  |\Jbb_{\hb_{a,b}}(0,0)|=L^{-1} \frac{1}{q_n } .\] 

Hence for any $(a,b) \in \Ac_N^n$ and $(x,y) \in I^2$,  this gives
\[ \mathrm{diam}(I_{a,b}) \leq    C_1 (N+1)^{2n}  |\Jbb_{\hb_{a,b}}(x,y)| \]
for some $C_1>0$  by noticing that all $a_i$'s satisfy  $a_i \leq N$, and then  $q_n \leq (N+1)^n$. Accordingly, we get 
\[   \sum_{(a,b)\in \Ac_N^n} \mathrm{diam}(I_{a,b})^s \leq   C_1^s (N+1)^{2ns}  \sum_{(a,b)\in \Ac_N^n}  |\Jbb_{a,b} (x,y)|^s \leq   C'_1  (N+1)^{2ns} \lambda_{N,s}^n  \]
for some $C'_1>0$. Taking the infimum over all open covers $\Oc$ whose elements have diameter at most $\delta$, we see that $ \inf \sum_{O_j \in \Oc} \mathrm{diam}(O_j)^s$ is bounded above by $C'_1 (N+1)^{2ns} \lambda_{N,s}^n$, where $n$ is chosen such that $\delta \geq \max_{\Ac_N^n} \mathrm{diam}(I_{a,b})$. This shows that if 
 $ (N+1)^{2s} \lambda_{N,s} <1$, then this infimum tends to 0 as $\delta$ goes to 0. Hence, we have $\dim_H(E_N) \leq s_2$ for  $s_2$ for which $(N+1)^{2s_2} \lambda_{N,s_2}=1$.


Similarly,  Lemma \ref{distortion}  together with Proposition \ref{prop:diameter}    gives,  for some $C_2,C'_2>0$,
\[   \sum_{(a,b)\in \Ac_N^n} \mathrm{diam}(I_{a,b})^{3s} \leq    C_2 ^{3s}   \sum_{(a,b)\in \Ac_N^n} \mathrm{Jac}(I_{a,b})^{s} \leq  (C_2')^s  \lambda_{N,s}^n,  \]
we also obtain $\dim_H E_N \leq 3s_1 $  for  $s_1$   satisfying $\lambda_{N,s_1}=1$.
\end{proof} 

  For the lower bound, we further observe:

\begin{prop} \label{low:hausdorff}
We have $\frac{3s_1}{2}  \leq dim_H E_N$  for  $s_1$   satisfying $\lambda_{N,s_1}=1$.
\end{prop}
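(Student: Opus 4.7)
My plan is to exploit the fact that $E_N$ is essentially the graph of a function over the classical bounded-type continued fraction set
\[ E_N^x := \{ x \in [0,1) : a_i(x) \leq N \mbox{ for all } i \geq 1 \}, \]
reducing the lower bound to the known Bowen--Ruelle formula for the Gauss map. For any $(x,y) \in E_N$ the partial quotients of $x$ automatically satisfy $a_i \leq N$, so $x \in E_N^x$. Conversely, for any $x \in E_N^x$, the sequence $(a_i, a_i - 1)_{i \geq 1}$ is admissible in the sense of Proposition \ref{ost} (the bound $0 \leq b_i \leq a_i$ is clear, the Markov condition is never triggered since $a_i \neq b_i$, and the third condition is automatic), so the nested intersection of the quadrangulars $I_n(a_1, \ldots, a_n)$, whose diameters tend to zero by Proposition \ref{prop:diameter}, determines a unique $y \in [0,1)$ with $(x,y) \in E_N$. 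Hence the projection $\pi: E_N \to E_N^x$, $(x,y) \mapsto x$, is a $1$-Lipschitz bijection, yielding $\dim_H E_N \geq \dim_H E_N^x$.

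The next step is to identify $\dim_H E_N^x$ with $3s/2$ via the operator $\Lc_{N,s}$. Since every $(a,b) \in \Ac_N$ satisfies $b = a - 1 < a$, Lemma \ref{markov} gives $S(I_{a,b}) = I^2$, so the characteristic functions $\ib_{SI_{a,b}}$ appearing in the definition of $\Lc_{N,s}$ are identically $1$. Acting on a function $\phi(x,y) = \psi(x)$ depending only on $x$, the operator therefore collapses to
\begin{equation*}
\Lc_{N,s}\psi(x) = \sum_{a=1}^N \frac{1}{(a+x)^{3s}} \, \psi\!\left(\frac{1}{a+x}\right),
\end{equation*}
which is precisely the restriction to $\{1,\ldots,N\}$ of the classical Gauss-map transfer operator at parameter $t := 3s/2$. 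Since the $x$-dependent subspace is $\Lc_{N,s}$-invariant and contains a positive eigenfunction $\psi$ with the dominant eigenvalue $\mu$ of this Gauss operator, and since the lift of $\psi$ is a positive eigenfunction of $\Lc_{N,s}$ on the full ambient space, uniqueness of the Perron--Frobenius eigenfunction from Theorem \ref{spectrum1} forces $\mu = \lambda_{N,s}$.

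Finally, I appeal to the classical Bowen--Ruelle formula for bounded-type continued fraction sets (Hensley \cite{hens1, hens2}, Cesaratto--Vall\'ee \cite{cesaratto:val}, Jenkinson--Gonzalez--Urba\'nski \cite{jenkinson}), which asserts that $\dim_H E_N^x$ is the unique real $t_0$ for which the operator $\psi \mapsto \sum_{a \leq N}(a+x)^{-2t_0} \psi(1/(a+x))$ has dominant eigenvalue $1$. The identification above shows that $\lambda_{N,s} = 1$ is equivalent to $3s/2$ being precisely this exponent, hence $\dim_H E_N^x = 3s/2$, and the Lipschitz projection yields $\dim_H E_N \geq 3s/2$.

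The main subtlety is the eigenvalue identification: one must verify that the dominant Perron eigenfunction of $\Lc_{N,s}$ can be chosen independent of $y$, which follows cleanly from the invariance of the $x$-dependent subspace together with the uniqueness statement of Theorem \ref{spectrum1}. The factor $3/2$ in the statement reflects the cube (rather than square) of the Jacobian for the Ostrowski skew-product, which carries an extra contraction factor $1/(a+x)$ along the $y$-fiber compared with the Gauss map; this explains why the lower bound does not match the upper bound of Proposition \ref{up:hausdorff} and why the present approach falls short of a genuine Bowen--Ruelle identity.
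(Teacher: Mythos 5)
Your proof is correct, but it takes a genuinely different route from the paper's. The paper's proof constructs a Gibbs measure $\mu_{N,s}$ as the eigenmeasure of the dual of $\Lc_{N,s}$, derives an upper bound $\mu_{N,s}(I_{a,b}) \ll \lambda_{N,s}^{-n}|\Jbb_{\hb_{a,b}}|^s$ on cylinder measures via bounded distortion, and then runs a mass-distribution-principle argument directly on balls in $E_N$, using the explicit geometry of the quadrangular cells from Proposition~\ref{prop:diameter} (vertices $A_n,\dots,D_n$, alternating location of the ``hole'' $K_n$, etc.) to bound $\mu_{N,s}(B(\delta)) \ll N^{\ell+1}\delta^{3s/2}$. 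You instead notice that on $\Ac_N$ the Markov condition is vacuous (since $b_i = a_i - 1 < a_i$), so $E_N$ is the graph of a function over $E_N^x = \{x : a_i(x) \leq N\}$ and the $x$-projection is a $1$-Lipschitz bijection; then you identify the restriction of $\Lc_{N,s}$ to $y$-independent functions with the bounded-alphabet Gauss transfer operator at parameter $3s/2$, use the invariance of that subspace plus the Perron--Frobenius uniqueness from Theorem~\ref{spectrum1} to get $\lambda_{N,s}$ equal to the Gauss-operator eigenvalue, and finish by quoting the classical Bowen--Ruelle formula for $E_N^x$.

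Each approach has advantages. Yours is shorter, avoids the delicate cylinder-geometry bookkeeping in the paper, and gives a sharper conceptual interpretation of the constant: $3s/2$ is not merely a lower bound but is exactly $\dim_H E_N^x = \dim_H \pi(E_N)$, pinpointing the loss as coming from forgetting the fiber. Your closing remark that the factor $3/2$ encodes the cube-versus-square Jacobian and the extra fiberwise contraction $1/(a+x)$ correctly explains the gap between this bound and Proposition~\ref{up:hausdorff}. The paper's approach is more self-contained (no appeal to the external Gauss-map Bowen--Ruelle formula) and, because it works intrinsically with a Gibbs measure on $E_N$ rather than by reduction to a one-dimensional base, it is better positioned for the authors' stated aim of refining the estimate and for generalisation to situations where the bounded-digit set is not a graph (e.g.\ if $b_i$ were allowed to range over an interval rather than being forced to equal $a_i - 1$). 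One small point worth making explicit in your write-up: the lift $\Phi(i,u,v)=\psi(u)$ lands in $\Bw(\Omega)$ as a bona fide element (constant in $i$ and $v$), and since all the relevant transition-matrix entries $A^{(a,b)}_{\tau_{a,b}(i),i}$ with $(a,b)\in\Ac_N$ equal $1$, the generalised operator $\Lt_{N,s}$ acts on it by the same formula; this is what lets you invoke the Krasnosel'skii uniqueness of positive eigenfunctions to conclude $\mu = \lambda_{N,s}$.
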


\begin{proof}
We consider the Gibbs measure $\mu_{N,s}$ defined as the  probability  eigenmeasure of $\lambda_{N,s}$ for the dual  operator of  the constrained operator $\Lc_{N,s}$, whose existence is given   by   the analogue of Theorem \ref{spectrum1} for the constrained operator.
It satisfies, with some constant $C_3$,   for $(a,b) \in \Ac_N^n$ and for some fixed  $(x,y)\in E_N$, by also Lemma \ref{distortion}:
\begin{equation}\label{eq:jac3}
 \mu_{N,s}(I_{a,b}) =  \lambda_{N,s}^{-n}  \int_{I^2}(\Lc_{N,s}^n \mathbf{1}_{I_{a,b}}) d\mu_{N,s} \leq C_3 \lambda_{N,s}^{-n} |\Jbb_{\hb_{a,b}}(x,y)|^s.
\end{equation}
We now fix $s$  equal to $ s_1$  with $ \lambda_{N,s_1}=1$.

We use Notation \ref{nota:In} and Proposition  \ref{prop:diameter}. We recall   from \eqref{EN:limitset} that the set $$E_N=\cap_{n \geq 1} \cup_{(a,b) \in \Ac_N^n} I_{a,b}$$ is the intersection of the sequence of decreasing subsets $\cup _{(a,b) \in \Ac_N} I_n (a_1, \cdots, a_n)$ for $(a,b)=((a_1,a_1-1), \cdots, (a_n,a_n-1)) \in \Ac_N^n$.  For given  $(a_1, \cdots, a_n)$, we consider the natural decomposition (up to the boundaries of  the atoms) as follows (see Figure  \ref{fig:3ter} as  an illustration):  
\begin{equation} 
K_{n}  (a_1, \cdots, a_{n})= I_{n}  (a_1, \cdots, a_n) \backslash \bigcup_{  1 \leq k \leq N} I_{n+1}  (a_1, \cdots, a_{n},k) .
\end{equation}
 \begin{figure}[h] 
   \centering
   \includegraphics[width=2.5 in]{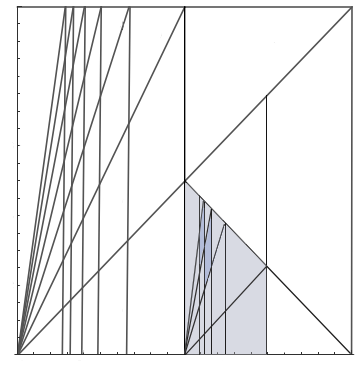} 
   \caption{The set $K_2(1,1)$, depicted in blue, inside $I_2(1,1)$ ($N=2$).}
   \label{fig:3ter}
\end{figure}
 
Then the set $K_{n}  (a_1, \cdots, a_{n}) $ is a union of fundamental  sets  $I_{a',b'}$  corresponding 
 to    elements  $(a',b' ) \in \Ac ^{n+1}$ satisfying $ a'_1=a_1, \cdots, a'_{n}=a_{n}$ and $a'_{n+1} \geq N+1$, also $ b'_1=a_1-1, \cdots, b'_{n}=a_{n }-1$ and $b'_{n+1} \leq a'_{n+1}$.  The  set  $ \bigcup_{  1 \leq k \leq N} I_{n+1}  (a_1, \cdots, a_{n},k)$    is a  connected component of $I_{n}  (a_1, \cdots, a_n)$ made of a finite union of  trapeziums, and  the trapezium  $K_{n}  (a_1, \cdots, a_{n})$
  is  taken away from $I_{n}  (a_1, \cdots, a_n)$ at level $n$    for defining $E_N$; we consider  it as a  ``hole''.
Notice that the hole is located on  the right of $I_{n}  (a_1, \cdots, a_n)$  if and  and only if  $n$ is odd. 



 Let $B(\delta)$ be a closed ball of radius $\delta$ in $E_N$. 
Let $n$ be the largest integer such  that  $ B (\delta)\subset I_{n-1} (a_1,\cdots,a_{n-1})$, for some 
$(a_1,\cdots,a_{n-1})$.
The ball $B(\delta)$ thus intersects at least two sets of the form  $I_{n} (a_1, \cdots,  a_{n-1}, k+1)$   and $I_{n} (a_1, \cdots,  a_{n-1}, k)$ ($1\leq k \leq N-1$).
We assume w.l.o.g. that $ n$  is odd.   This implies 
that  the  hole  $K_{n-1}(a_1, \cdots,a_{n-1})$ is on the left  of $I_{n-1}  (a_1, \cdots, a_{n-1})$, and  thus 
that  the set  $I_{n} (a_1, \cdots,  a_{n-1}, k+1)$ is located on the left  of  $I_{n} (a_1, \cdots,  a_{n-1}, k)$.  Moreover,  the  hole  $K_{n}(a_1, \cdots,a_{n-1},k+1)$ is on the right of $I_{n} (a_1, \cdots,  a_{n-1}, k+1)$.
Since $B(\delta)$  intersects   $I_{n} (a_1, \cdots,  a_{n-1}, k+1)$   and $I_{n} (a_1, \cdots,  a_{n-1}, k)$,
then  $\delta$ is larger than  the difference of the lower vertices of the  set $ K_{n}(a_1, \cdots,a_{n-1},k+1)$.

It remains to  determine the lower vertices of  $ K_{n}(a_1, \cdots,a_{n-1},k+1)$. 
The fundamental set    $I_{n+1} (a_1, \cdots,  a_{n-1}, k+1,N)$  is located  just on the left   of  the hole  $K_{n}(a_1, \cdots,a_{n-1},k+1)$  within the fundamental set  $I_{n} (a_1, \cdots,  a_{n-1}, k+1)$.  According to Notation \ref{nota:In}, the   right lower  vertex  of  $I_{n+1}( a_1, \cdots, a_{n-1},k+1,N)$  is    $D_{n+1}( a_1, \cdots, a_{n-1},k+1,N)$. This gives that   the left lower vertex of   the hole $ K_{n}(a_1, \cdots,a_{n-1},k+1)$
is  $D_{n+1}( a_1, \cdots, a_{n-1},k+1,N)$. The  right lower vertex  of  the hole $K_{n}(a_1, \cdots,a_{n-1},k+1)$ is the right  lower vertex of  $I_n  (a_1, \cdots, a_{n-1},k+1)$,
 namely 
 $A_n(a_1, \cdots, a_{n-1},k+1)$. 
The lower vertices of  $K_{n}(a_1, \cdots,a_{n-1},k+1)$ are  thus  $D_{n+1}(a_1, \cdots, a_{n-1},k+1,N)$ and $A_{n}(a_1, \cdots, a_{n-1},k=1)$.
Hence $ \delta$ is  larger than  the absolute value of the  difference of their  abcisses, denoted by  $|-D_{n+1} (a_1, \cdots, a_{n-1} , k+1, N) +A_{n} (a_1, \cdots, a_{n-1} , k+1)|_x $.

Now, let us prove that  $\delta \geq   \frac{1} {(N+1)^3q_{n-1}^2 }.$ By   Proposition \ref{prop:diameter}, one has 
\begin{align*}
 |-D_{n+1} (a_1, \cdots, a_{n-1} , k+1, &N) +A_{n} (a_1, \cdots, a_{n-1} , k+1))|_x \\
 &= \frac{p_{n+1}  +p_{n} ( k+1)}{q_{n+1}  +q_{n} (k+1)} -\frac{p_{n} (k+1)}{q_{n} (k+1)}
 \end{align*}
by using  the following notation:
 $$p_{n+1}= p_{n+1}(a_1, \cdots, a_{n-1} , k+1,N), \  p_n(k+1)= p_n(a_1, \cdots, a_{n-1} , k+1),$$
$$p_{n-1}= p_{n-1}(a_1, \cdots, a_{n-1}),$$  with also  the  analogous  notation  for  the  $q_n$'s.
One has $q_n(k+1)= q_n (k) +q_{n-1}, $
and similarly for the  $p_n$'s.
This gives $$ \delta \geq \left| \frac{p_{n+1}  +p_{n} ( k+1)}{q_{n+1}  +q_{n} (k+1)} -\frac{p_{n} (k+1)}{q_{n} (k+1)}\right|= \frac{1}{(q_{n+1}  +q_{n} (k+1))q_{n} (k+1)}  \geq  
 \frac{1}{ (N+1) ^3 q_{n-1}^2} $$
by noticing that $q_n(k+1) \leq  (N+1) q_{n-1}$ and $q_{n+1} \leq (N^2+N+1)   q_{n-1} \leq (N+1)^2 q_{n-1}$.

Moreover,  there is a positive integer  $\ell $  such that  for all  $m$ and for  any  $\ell+m$-tuple $(a',b')$  of pairs  of digits  in $ \Ac_N^{\ell+m}$,  one has 
$$ \frac{1}{q_{m+\ell}(a',b')} \leq  \frac{1} {(N+1)^{3/2} q_{m-1}(a',b') }  \leq \delta^{1/2},$$
(here the dependence of the  $q_i$'s  with respect to the choice  of digits $(a',b')$ is  denoted as $q_{i}(a',b')$).
Hence,   by setting  $m=n$, Proposition \ref{prop:diameter} yields  that the  corresponding Jacobian  $ \Jbb_{\hb_{a',b'}} $ satisfies 
$$| \Jbb_{\hb_{a',b'}} |  \leq  \frac{1}{q_{n+\ell}^3(a',b')} \leq   \delta^{3/2}.$$


 Since  $ B (\delta)\subset I_{n-1} (a_1,\cdots,a_{n-1}) $,  $B(\delta)$ is covered by at most  $N^{\ell+1}$   cylinders  of depth $n+\ell$, and by (\ref{eq:jac3}), one has 
 $$\mu_{N,s_1}(B(\delta)) \leq   C_3  N^{\ell+1}     \delta ^{3s_1/2}.$$
Thus by mass distribution principle, we get $\dim_H(E_N) \geq 3s_1/2$.
\end{proof}

We end with one final observation by  discussing  a simple corollary following Hensley \cite{hens2}. Indeed, together with the spectral gap from Theorem \ref{spectrum1} and \ref{mixing}, the  following relation between the leading eigenvalues of $\Lc_{N,s}$ and $\Lc_{s}$, namely 
\begin{equation} \label{relation:eig}
\lambda_{N, 1-\frac{s}{N}}=\lambda_{1-\frac{s}{N}} + O(N^{-1}),
\end{equation}
is an almost straightforward adaptation of the arguments due to Hensley. Remark that Hensley \cite{hens2} obtain this more precisely with an accurate $N^{-1}$ order term and $O(N^{-2})$, but here we only consider a simple situation.

Looking at the Taylor expansion at $s=0$, we have 
\begin{equation} \label{taylor}
 \lambda_{1-\frac{s}{N}}=  1- \frac{s}{N} \cdot \frac{d}{ds} \lambda_s \Big\vert_{s=1}+O(N^{-2}) . 
\end{equation}
This shows that if one gets the first derivative of $\lambda_s$ of $\Lc_s$, then it is possible to calculate the eigenvalue  $\lambda_{N,s}$ of the constrained operator $\Lc_{N,s}$. 

  Consider  now the operator 
\[ \Ac \phi(x,y)=- \sum_{(a,b) \in \Ac} \frac{3 \log(a+x)}{(a+x)^3} \cdot \phi \left( \frac{1}{a+x}, \frac{b+y}{a+x}  \right) .\]
Note that this operator is simply defined by $\Ac \phi= \frac{d}{ds} \Lc_s \Big\vert_{s=1}$ and thus it satisfies
\begin{equation} \label{taylor2}
\Lc_s=\Lc_1+ (s-1)\Ac+O(|s-1|^2) 
\end{equation}
where $s$ is close to 1. Here by the decomposition theory of bounded operators (see Hensley \cite[Lemma 7]{hens2}), for the invariant density $\varphi_*$ from Remark \ref{ito}, we can write $\Ac \varphi_*= c \varphi_* + \eta$ for some $c\in \Cb$ with $\eta$ belonging to the image of $\Ac-cI$. Evaluating the projection operator, we have $\mathcal{P} \Ac \varphi_*=c \varphi_*$ since $\mathcal{P} \circ \eta =0$. Thus from integral representation for $\mathcal{P}$, we see  that  $c= \int_{I^2}\Ac \varphi_* dm$. We first claim that $c=\frac{d}{ds} \lambda_s |_{s=1}$. Set $z=s-1$ and $u=(I-\Lc_1)^{-1} \eta$, then (\ref{taylor2}) gives
\begin{align*} 
\Lc_s(\varphi_*+zu)&=\Lc_1(\varphi_*+zu)+z \Ac(\varphi_*+zu)+ O(|z|^2) \\
&=(\varphi_*+zu)+cz(\varphi_*+zu)+O(|z|^2)
\end{align*}
since we have $\lambda_1=1$. Hence for $|z|$  being sufficiently small, by perturbation theory (see e.g.  \cite[\S6]{hens2}), this  yields that $|\lambda_s - (1+cz)|\ll |z|^2$ and thus $\frac{d}{ds} \lambda_s |_{s=1}=c$.

Therefore we have 
\begin{align*} 
c=\frac{d}{ds} \lambda_s \Big\vert_{s=1}&= \int_{I^2} \Ac \varphi_*(x,y) d(x,y)  \\
&=- \frac{1}{2 \log 2} \int_0^1 \sum_{a \geq 1} (a+1)x \cdot \frac{3\log(a+x)}{(a+x)^3} \cdot \frac{(3a+1)+3x}{(a+x)^2 ((a+1)+x)^2} dx \\
& \ \ \ \ \ \ \ + \frac{1}{2 \log 2} \int_0^1 \sum_{a \geq 1} a(1-x) \cdot \frac{3\log(a+x)}{(a+x)^3} \cdot \frac{(2a+1)+2x}{(a+x)^2 ((a+1)+x)^2} dx .
\end{align*}
Together with (\ref{relation:eig}) and (\ref{taylor}), we have $\lambda_{N, 1-\frac{s}{N}}=1-c \cdot \frac{s}{N}+O(N^{-2})$. 

\begin{rem}
Proposition \ref{up:hausdorff} and \ref{low:hausdorff} show that the Hausdorff dimension of $E_N$ can be estimated via the  bounded distortion property of  the Jacobian as an application of  the spectral analysis of the transfer operator due to nice properties of the Ostrowski dynamical system, despite the non-conformality. 
Though this is far from being  sufficient to have the same upper and lower bound. We  even expect both  bounds  not being  sharp,  the upper bound  being in particuar far from optimal, which we plan  to   investigate in the further work.
\end{rem}

\section{Appendix} \label{ap:ost}

\subsection{Proof of Proposition \ref{prop:diameter}}

Let $(a,b) =((a_1,b_1),  \ldots, (a_n,b_n)) \in \Ac^n$. We keep the notation of Proposition \ref{prop:diameter} for the definition of    $(p_k)_{ -1 \leq k \leq n}$ and $(q_k)_{ -1 \leq k \leq n}$.
 The    matrix  of the homography  $ \hb_{a,b}$  is
$M_{a_1, b_1}\cdots M_{a_n,b_n}$, where  $M_{a_i,b_i}:=\begin{bmatrix} 
0 & 0 & 1\\
0 &1 & b_i\\
1 & 0 & a_i
\end{bmatrix}.$  It  is thus  of the form 
$$\begin{bmatrix}
p_{n-1} & 0 & p_n\\
\alpha_{n-1} & 1 & \alpha_n\\
q_{n-1} & 0 & q_n
\end{bmatrix}$$
with 
\begin{equation}\label{eq:alpha3}
\alpha_n= \sum _{i=1}^n  b_i (-1)^{i} (q_n p_{i-1} - p_n q_{i-1})=  \sum _{i=1}^n  b_i  |q_n p_{i-1} - p_n q_{i-1}|.
\end{equation}
Note that  the expressions  for $\alpha_n$ can be obtained by induction, with the induction relation being  $ \alpha_{k+1}= \alpha_{k-1} + b_{ k+1} + \alpha_k a_{ k+1}$,  by observing  that the  sign of 
 $q_np_{i-1}-p_nq_{i-1}$ is  given by  $(-1)^{i}$ ($1 \leq i \leq n-1$)  and thus does not depend on $n$.
 We  directly deduce   from this expression  the   first statement on the Jacobian.
 
The set $I_{a,b}$   is either equal to  $ \hb_{a,b}( I^2)$ or $ \hb_{a,b}( \Delta_0)$.  
According to Notation \ref{not:h} and from   the expression   $ \hb_{a,b} (x,y)= (\frac{p_{n-1} x + p_n}{q_{n-1} x + q_n}, \frac{\alpha_{n-1} x +y+ \alpha_n}{q_{n-1} x + q_n}),$
one deduces that the  vertices of $I_{a,b}$    belong to the set  $A_n= \hb_{a,b}(0,0)$,  $B_n= \hb_{a,b}(0,1)$,  $C_n=\hb_{a,b}(1,1)$, and $D_n=\hb_{a,b}(1,0)$,
with
\begin{align*}
A_n&=\left( \frac{p_n}{q_n}, \frac{\alpha_n}{q_n} \right), \ 
B_n=\left( \frac{p_n}{q_n},  \frac{\alpha_n +1 }{q_n} \right),\\
   C_n&=\left(\frac{p_n+p_{n-1}}{q_n+q_{n-1}},  \frac{\alpha_n+ \alpha_{n-1}+1}{q_n+q_{n-1}} \right), \ 
D_n=  \left(\frac{p_n+p_{n-1}}{q_n+q_{n-1}},  \frac{\alpha_n+ \alpha_{n-1}}{q_n+q_{n-1}}  \right)  .
\end{align*}
 Let $K_n$ stand  for   the maximum of the absolute values of   the cofactors of  the matrix  of $ \hb_{a,b}$.  One checks  that  the diameter  of  $I_{a,b}$
 is smaller than or equal to  $3 K_n/q_n^2$.  
 One has  $$K_n=\max ( |-p_n \alpha_{n-1} + \alpha_n p_{n-1}|, |-q_n \alpha_{n-1} + \alpha_n q_{n-1}|, 1, p_n, q_n, p_{n-1}, q_{n-1}).$$
Moreover, one deduces from   (\ref{eq:alpha3}) 
that,   for any positive $n$, 
\begin{equation}\label{eq:alpha2} -q_n \alpha_{n-1} + \alpha_n q_{n-1}=    \sum _{i=1}^n  b_i (-1)^{n+i}  q_{i-1},
\end{equation}
and similarly $ -p_n \alpha_{n-1} + \alpha_n p_{n-1}=    \sum _{i=1}^n  b_i (-1)^{n+i}  p_{i-1}.$
This implies that       $K_n =q_n$ and  thus that the diameter of $I_{a,b}$ is   in $O(1/q_n)$.

Now we assume that $(a,b) \in  \Ac_N^n$. We will get more precise estimates in this case. We have $I_{a,b}= \hb_{a,b}( I^2)$.  In particular $I_{a,b}$ is  quadrangular (and not triangular).
The measure of $I_{a,b}$ thus satisfies
$$
\int _{I^2}  \left(\frac{1}{q_{n} +x q_{n-1}}\right)^3 dxdy =  \int _{[0,1]}  \left(\frac{1}{q_{n} +x q_{n-1}}\right)^3 dx =\frac{q_{n-1}+2q_n}{2q_n^2 (q_n+q_{n-1})^2}
.$$

We recall that the four vertices of $I_{a,b}$   are denoted by $A_n= \hb_{a,b}(0,0)$,  $B_n= \hb_{a,b}(0,1)$,  $C_n=\hb_{a,b}(1,1)$, and $D_n=\hb_{a,b}(1,0)$.
The trapezium  $I_{a,b}$ is  inscribed  in  a rectangle with  parallel  vertical sides      of  width $\frac{1}{q_n (q_n +q_{n-1})}$.
One has $B_n$  above $A_n$ and  $C_n$ above $D_n$.    Its two sides have respective lengths $\frac{1}{q_n}$ and $\frac{1}{q_n + q_{n-1}}$.
Hence the diameter  of this quadrangular is bounded  below by $\frac{1}{q_n+q_{n-1}}$.

  One has 
$$\sum_{i=1} ^{n}  (-1) ^{i} a_i q_{i -1}=(-1)^{n} (q_{n} -q_{n-1} )- q_0+q_{-1}=(-1)^{n} (q_{n} -q_{n-1} )-1,$$
which gives 
\begin{equation}\label{eq:sum}
\sum_{i=1} ^{n} (-1) ^{i} (a_i -1) q_{i -1}=(-1)^{n} (q_{n} -2q_{n-1})  + \sum_{i=1}^{n-2} (-1)^i  q_{i} .
\end{equation}

Hence by   \eqref{eq:alpha2} and  \eqref{eq:sum}, this gives
$$
-q_n \alpha_{n-1} + \alpha_n q_{n-1}=  q_{n} -2q_{n-1}  + (-1)^{n} \sum_{i=1}^{n-2} (-1)^i  q_{i} .
$$
For two points $A$ and  $B$,  the notation    $(A-B)_y$ stands   for the  difference  between their coordinates. We have
$$(C_n-A_n)_y= \frac{-\alpha_n q_{n-1}+ \alpha_{n-1}q_n + q_n }{q_n (q_n +q_{n-1})}.$$
Hence by   (\ref{eq:alpha2}) and (\ref{eq:sum}), one gets 
$$(C_n-A_n)_y= \frac{ \sum _{i=1}^n  b_i (-1)^{n+i}  q_{i-1}+ q_n }{q_n (q_n +q_{n-1})}
= \frac{2q_n  -2 q_{n-1} +(-1)^n   \sum_{i=1}^{n-2}  (-1)^i q_i} {q_n(q_n + q_{ n-1})}   ,$$
which implies  
$$ 0< (C_n-A_n)_y < \frac{2}{q_n+q_{n-1}}.$$

Similarly, one has  

$$(B_n-D_n)_y=\frac{\alpha_n q_{n-1}- \alpha_{n-1}q_n +q_n +q_{n-1}}{q_n (q_n +q_{n-1})},$$
which gives
$$0< (B_n-D_n)_y=   \frac{ 2q_n -q_{n-1}+  (-1)^{n} \sum_{i=1}^{n-2}  (-1)^i q_i} {q_n(q_n + q_{ n-1})} < \frac{2}{q_n+q_{n-1}} .$$




\subsection{Proof of Proposition \ref{ost}}

\begin{proof}[Proof of Proposition \ref{ost}] 
We follow here  mainly  the proof given in  \cite{Bourla}.

We   first  state some useful  identities. Recall that 
$|\theta_{k-1}|=a_{k+1} |\theta_{k}| +  |\theta_{k+1}| $   ($k \geq 0$).
Further, we have  $\theta_{-1}=1$ and $\theta_0=x$.  We also  recall that the series $\sum_{k=0}^{\infty} a_k  |\theta_k|$ is convergent as established in \S\ref{arith:iden},
and that  $\sum_{k=n}^{\infty} a_k |\theta_{k-1}|= |\theta_{n-2}|+ |\theta_{n-1}| $ holds for all positive $n$ (by (\ref{eq:reste})).
By using telescoping sums, one gets  
$$1+ x=   \sum_{k =1}^{\infty}  a_k | \theta_{k-1}|  , \quad 1- x = \sum _{k=1}^{\infty}  a_k \theta_{k-1}$$
which gives 
\begin{equation}\label{eq:reste2}
x= \sum_{k =1}^{\infty} a_{2k} |\theta_{2k-1}|, \quad  1=  \sum_{k =0}^{\infty} a_{2k+1} |\theta_{2k}|.
\end{equation} 

We first show that the sequence of digits $(a_i,b_i)_{i \geq 1}$ produced by the  Ostrowski map applied to $(x,y)$
satisfies  the admissibility  conditions (\ref{ost:1}--\ref{ost:3}).
Recall that
$a_i=\lfloor \frac{1}{x_{i-1}} \rfloor$ and $b_i= \lfloor \frac{y_{i-1}}{x_{i-1}} \rfloor$ ($i \geq 1$).
Since  $y <1$, we have $ \lfloor \frac{y_{i-1}}{x_{i-1}} \rfloor \leq  \lfloor \frac{1}{x_{i-1}} \rfloor$, which  yields (\ref{ost:1}).
Assume that $a_i=b_i$.  One has   $x_i= \frac{1}{x_{i-1}}- a_i$, and  $y_i= \frac{x_{i-1}}{y_{i-1}} - b_i$.
One has $y_i < x_i$,   since  $\frac{y_{i-1}}{x_{i-1}}  <  \frac{1}{x_{i-1}}$ and $a_i=b_i$.
This implies that $b_{i+1}=  \lfloor \frac{y_{i}}{x_{i}} \rfloor=0$, which  gives (\ref{ost:2}).
Finally we claim    (\ref{ost:3}). 
One has  $ y= \sum_{i=1}^\infty b_i |\theta_{i-1}| $. Assume that  $a_i = b_i$ for all   odd  integer greater than some $i_0\geq 1$. In particular,  by   Condition (\ref{ost:2}),  
$b_i=0$ for all  even index larger than $i_0$.  Applying Ostrowski's map $S$  to  $(x_{i_0-1}, y_{i_0-1})$  gives the  expansion  $(a_i,b_i)_{i \geq i_0}$. This implies by (\ref{eq:reste2}) that $1= \sum_{i\geq i_0}^\infty a_i |\theta_{i-1}|=  y_{i_0} <1,$ which yields the desired contradiction. The same reasoning applies for the case of even indices. 

We now  prove the uniqueness of  the expansion.
Let us fix $y \in [0,1)$. Let $(b_i)_{i \geq 1}$ and $(b'_i)_{i \geq 1}$  be such that 
\[ y= \sum_{i=1}^\infty b_i |\theta_{i-1}| =  \sum_{i=1}^\infty b'_i |\theta_{i-1}|\]
where 
$0 \leq b_i \leq a_i$ for all $i \geq 1$, $b_{i+1}=0$ if $a_i=b_i $,  $b_i \neq a_i$ for infinitely many even  and odd  integers,
and the same assumptions for $(b'_i)_{i \geq 1}$.
Suppose that $(b_i)_{i } \neq (b'_i)_{i }$.
Let $k$ be the smallest  positive  integer $i$   such that $b_i \neq b'_i$. We assume without loss of generality that $b_k > b'_k$. Then one has   $$ \sum_{i=k+1}^\infty ( b'_i -b_i) |\theta_{i-1}|= \sum_{i=1}^k (b_i- b'_i)  |\theta_{i-1}| = (b_k- b'_k)  |\theta_{k-1}|   \geq |\theta_{k-1}|.$$

\begin{itemize}
\item First assume that $(b'_{k+1} - b_{k+1} ) \leq  a_{k+1} -1 $. By (\ref{eq:reste}) and since  $b'_i \neq a'_i$ for infinitely many $i$, one has $$ \sum_{i \geq k+2}  (b'_i - b_i)   |\theta_{i-1}|  <  | \theta_{k} |+ |\theta_{k+1}|.$$ Hence we get
\[ |\theta_{k-1}| \leq  \sum_{i \geq k+1}  (b'_i - b_i)   |\theta_{i-1}|  <  a_{k+1}  | \theta_{k} |+  |\theta_{k+1}| =|\theta_{k-1}| ,\]
which  gives  a contradiction. 

\item We  now assume   that $b'_{k+1} - b_{k+1}  = a_{k+1}$.  This implies that  $b_{k+1}=0$, $b'_{k+1}= a_{ k+1}$ and  consequently 
$b'_{k+2}=0$.  
Let $\ell $ be the smallest  positive  integer  such that $b'_{k+1+ 2\ell } - b_{k+1 +2 \ell  }   < a_{k+1+ 2 \ell}$ (such an index exists since  $b'_i \neq a_i$ for infinitely many even  and odd  integers). 
 One has  again by  (\ref{eq:reste}), and since  $b'_i \neq a_i$ for infinitely many $i$,
\[    \sum_{i \geq k+2  \ell + 2}  (b'_i - b_i)   |\theta_{i-1}|  <    | \theta_{k+2\ell} |.  \] 
Consequently,  one has 
\begin{align*}
 |\theta_{k-1}| \leq \sum_{i \geq k+1}  (b'_i - b_i)   |\theta_{i-1}|&=  \sum_{i=0}^{\ell-1}  a_{k+1+2i} |\theta_{k+2i}|+ \sum_{i \geq k+2\ell+1 }  (b'_i - b_i)   |\theta_{i-1}|\\
&<  \sum_{i=1}^{\ell-1}  a_{k+1+2i} |\theta_{k+2i}|+  | \theta_{k+2 \ell} | \leq |\theta_{k-1}|,
\end{align*}
which yields  the desired  contradiction, by noticing that
\[ \sum_{i=0}^{\ell-1}  a_{k+1+2i} |\theta_{k+2i}|+ |\theta_{k+2\ell}|= |\theta_{k-1}|. \]
\end{itemize}
\end{proof}

\hide{Let us  distinguish  the  two  cases  $b_{k+2} \geq 1$ and  $b_{k+2} =0$. We   first  assume that  $b_{k+2} \geq 1$. Then one has, again by  (\ref{eq:reste}),
\[ \sum_{i \geq k+2}  (b'_i - b_i)   |\theta_{i-1}|   =-b_{k+2}   |\theta_{k+1}|  +  \sum_{i \geq k+3}  (b'_i - b_i)   |\theta_{i-1}|  \leq  (-b_{k+2} +1)   | \theta_{k+1} |+ |\theta_{k+2}|  \] 
which this gives  again a contradiction  by
  $$ |\theta_{k-1}| \leq \sum_{i \geq k+1}  (b'_i - b_i)   |\theta_{i-1}| < a_{k+1}  | \theta_{k} |+  |\theta_{k+1}|=|\theta_{k-1}|.$$}
  \hide{Lastly,    assume that  $b_{k+2} =0$.   One has,   again by  (\ref{eq:reste}), and since  $b'_i \neq a_i$ for infinitely many $i$,
\[ \sum_{i \geq k+2}  (b'_i - b_i)   |\theta_{i-1}|   =  
 \sum_{i \geq k+3}  (b'_i - b_i)   |\theta_{i-1}|  <    | \theta_{k+2} |+ |\theta_{k+1}|  \] 
which yields  again the same  contradiction.}

\bibliographystyle{abbrv}
\bibliography{SkewBib}

\end{document}